\tikzstyle{arrow} = [thick,->,>=stealth]
\tikzstyle{rec} = [rectangle, rounded corners, minimum width=1.4cm, minimum height=0.7cm,text centered, draw=black]
\newcommand{\N}{\mathbb{N}}
\newcommand{\R}{\mathbb{R}}
\newcommand{\Z}{\mathbb{Z}}
\newcommand{\Sph}{\mathbb{S}}
\newcommand{\ProRd}[1]{\mathcal{P}_{#1}(\R^d)}
\newcommand{\normTV}[1]{\norm{#1}}
\newcommand{\MeasRd}[1]{\mathcal{M}_{#1}(\R^d)}
\newcommand{\BorRd}{\mathscr{B}(\R^d)}
\newcommand{\LebRd}{\mathscr{L}^d}
\newcommand{\1}{\mathds{1}}
\newcommand{\comma}{\,\mathrm{,}\;\,}
\newcommand{\semicolon}{\,\mathrm{;}\;\,}
\newcommand{\fstop}{\,\mathrm{.}}
\DeclareMathOperator{\diam}{diam}
\DeclareMathOperator{\dist}{dist}
\DeclareMathOperator{\supp}{supp}
\DeclareMathOperator{\interior}{int}
\newcommand{\eqe}[2]{\tilde e_{#1,#2}}
\newcommand{\eqeCube}[3]{\tilde e_{#1,#2,#3}}
\newcommand{\eqc}[2]{\tilde q_{#1,#2}}
\newtheorem{theorem}{Theorem}[section]
\newtheorem{proposition}[theorem]{Proposition}
\newtheorem{lemma}[theorem]{Lemma}
\newtheorem{corollary}[theorem]{Corollary}
\newtheorem{conjecture}[theorem]{Conjecture}
\theoremstyle{definition}
\newtheorem{definition}[theorem]{Definition}
\newtheorem{example}[theorem]{Example}
\theoremstyle{remark}
\newtheorem{remark}[theorem]{Remark}
\numberwithin{equation}{section}
\begin{document}

\title[Asymptotics for Optimal Empirical Quantization of Measures]{Asymptotics for \\ Optimal Empirical Quantization \\ of Measures}

\author{Filippo Quattrocchi}
\address{Institute of Science and Technology Austria, Am Campus 1, 3400 Klosterneuburg, Austria}
\curraddr{}
\email{filippo.quattrocchi@ista.ac.at}
\thanks{}

\keywords{optimal empirical quantization, vector quantization, Wasserstein distance, semidiscrete optimal transport, Zador's Theorem, Pierce's Lemma}

\date{\today} 

\thanks{The author is thankful to Nicolas Clozeau, Lorenzo Dello Schiavo, Jan Maas, Dejan Slepčev, and Dario Trevisan for many fruitful discussions and comments.
	 The author gratefully acknowledges support from the Austrian Science Fund (FWF) project \href{https://doi.org/10.55776/F65}{10.55776/F65}.}

\dedicatory{}

\subjclass[2020]{41A25 (Primary), 62E17, 49Q22.}

\begin{abstract}
	We investigate the \emph{minimal} error in approximating a general probability measure~$\mu$ on~$\R^d$ by the \emph{uniform} measure on a finite set with prescribed cardinality~$n$. The error is measured in the~$p$-Wasserstein distance. In particular, when~$1\le p<d$, we establish \emph{asymptotic} upper and lower bounds as~$n \to \infty$ on the rescaled minimal error that have the \emph{same, explicit} dependency on~$\mu$.
	
	In some instances, we prove that the rescaled minimal error has a limit. These include general measures in dimension~$d = 2$ with~$1 \le p < 2$, and uniform measures in arbitrary dimension with~$1 \le p < d$. For some uniform measures, we prove the limit existence for~$p \ge d$ as well.

	For a class of compactly supported measures with H\"older densities, we determine the convergence speed of the minimal error for~\emph{every}~$p \ge 1$.
	
	Furthermore, we establish a new Pierce-type (i.e.,~nonasymptotic) upper estimate of the minimal error when~$1 \le p < d$.
	
	In the initial sections, we survey the state of the art and draw connections with similar problems, such as classical and random quantization.
\end{abstract}

\maketitle

\section{Introduction}

\emph{Quantization} is the problem of optimally approximating a probability measure~$\mu$ on~$\R^d$ by another one, say~$\mu_n$, supported on a finite number~$n$ of points. For instance, we can think of~$\mu$ as the description of a picture and of~$\mu_n$ as its digital \emph{compression}. Another typical example comes from \emph{urban planning}: if~$\mu$ represents the population distribution in a city, then the support of the approximating measure~$\mu_n$ determines good locations to build schools, supermarkets, parks, etc.

The mathematical formulation is as follows: for a given number of points~$n$ and a fixed parameter~$p \in [1,\infty)$, find a solution to the minimization problem
\begin{equation}
	e_{p,n}(\mu) \coloneqq \min_{\mu_n \in \ProRd{}} \set{ W_p(\mu,\mu_n) \, : \, \#\supp (\mu_n) \le n} \comma
\end{equation}
where~$W_p$ is the~$p$-Wasserstein--Kantorovich--Rubinstein  distance~\cite{Villani09,Santambrogio15}. We will call this minimal number the $n^\text{th}$ \emph{optimal quantization error} of order~$p$ for~$\mu$. Equivalently (see~\cite{GrafLuschgy00}),~$e_{p,n}(\mu)$ can be written as the following minimum over maps~$T \colon \R^d \to \R^d$ \emph{onto at most~$n$ points}:
\begin{equation*}
	 e_{p,n}(\mu) = \min_T \left( \int \norm{x-T(x)}^p \, \dif \mu \right)^{1/p} = \min_T \mathbb E_{X \sim \mu} \left[\norm{X-T(X)}^p\right]^{1/p} \comma
\end{equation*}
or, also,
\begin{equation} \label{eq:epn}
	e_{p,n} (\mu) = \min_{x_1,\dots,x_n \in \R^d} \left( \int \min_i \norm{x-x_i}^p \, \dif \mu \right)^{1/p} \fstop
\end{equation}

One of the questions that has attracted considerable attention over the years is the \emph{asymptotic} behavior of~$e_{p,n}(\mu)$ as~$n \to \infty$, see~\cite{GrafLuschgy00}. The most fundamental result in this direction is Zador's Theorem: given a probability measure~$\mu$ on~$\R^d$ enjoying a suitable moment condition, and denoting by~$\rho$ the density of its absolutely continuous part, we have
\begin{equation} \label{eq:BWZ}
	\lim_{n \to \infty} n^{1/d} e_{p,n}(\mu) = q_{p,d} \left( \int \rho^\frac{d}{d+p} \, \dif \LebRd \right)^\frac{d+p}{dp} \comma
\end{equation}
with
\begin{equation}
	q_{p,d} \coloneqq \inf_{n \in \N_1} n^{1/d} e_{p,n}\bigl(\LebRd|_{[0,1]^d}\bigr) > 0 \comma
\end{equation}
see~\cite{Zador64,Zador82,BucklewWise82,GrafLuschgy00} and \Cref{thm:BWZ} below. That is, when~$\mu$ is not purely singular (otherwise the limit~\eqref{eq:BWZ} equals zero), Zador's Theorem determines the speed of convergence~$n^{-1/d}$ of~$e_{p,n}(\mu)$ and the explicit dependency of the prefactor on~$\mu$. For a heuristic derivation, see~\cite{Dereich09} or \Cref{sec:heuristics} below.

We will focus on a variation of the classical quantization problem: \emph{optimal} (or \emph{deterministic}) \emph{empirical quantization} (also known as \emph{optimal/deterministic uniform quantization}). The subject of our study is the \emph{optimal empirical quantization error}~$\eqe{p}{n}(\mu)$, defined by
\begin{equation}
	\eqe{p}{n}(\mu) \coloneqq \min_{\mu_n \in \ProRd{}} \set{W_p(\mu,\mu_n) \, : \, \mu_n = \frac{1}{n} \sum_{i=1}^n \delta_{x_i} \text{ for some } x_1,\dots,x_n \in \R^d} \comma
\end{equation}
or, equivalently,
\[
	\eqe{p}{n}(\mu) = \min_{x_1,\dots,x_n \in \R^d} \min_{\mu^{1},\dots,\mu^{n}} \left( \sum_{i=1}^n \int \norm{x-x_i}^p \, \dif \mu^{i} \right)^{1/p} \comma
\]
where~$\mu^{1},\dots,\mu^{n}$ are subprobabilities, \emph{each having total mass equal to~$1/n$}, that sum up to~$\mu$. 
The two numbers~$e_{p,n}(\mu)$ and~$\eqe{p}{n}(\mu)$ are similarly defined, but the second one is a minimum over a \emph{smaller} set of measures, hence~$e_{p,n}(\mu) \le \eqe{p}{n}(\mu)$. Our aim is to find formulas analogous to~\eqref{eq:BWZ} for~$\eqe{p}{n}(\mu)$. %

Several results are available, both for the case~$d=1$ \cite{JourdainReygner16,XuBerger19,GilesHefterMayerRitter19,BencheikhJourdain22,BencheikhJourdain22bis,GilesSheridan-Methven23}, and in arbitrary dimension~\cite{GilesHefterMayerRitter19,MerigotMirebeau16,Chevallier18,GilesHefterMayerRitter19bis,BrownSteinerberger20}, but a general statement like Zador's Theorem is still missing. As we will see in greater detail in \Cref{sec:previous}, the works~\cite{MerigotMirebeau16,Chevallier18} contain the proof of the following. For ``sufficiently nice'' probability measures~$\mu$ (and assuming, for simplicity,~$p \neq d$), we have
\begin{equation*} \label{eq:MerigotMirebeauChevallier}
	0 < \liminf_{n \to \infty} n^{1/d} \eqe{p}{n}(\mu) \quad \text{and} \quad  \limsup_{n \to \infty} n^{\frac{1}{\max(p,d)}} \eqe{p}{n}(\mu) < \infty \fstop
\end{equation*}
In particular, the speed of convergence to~$0$ of~$\eqe{p}{n}(\mu)$ is~$n^{-1/d}$ in the regime~$p < d$.
However, the known bounds from below and above of the limits inferior and superior are rather loose, in that they do not depend in the same way on~$\mu$ (see~\eqref{eq:MerigotMirebeauChevallierbis} below), and the existence of this limit is unknown.

\subsection{Main theorem}

Our main theorem addresses the first of the two matters above by providing a \emph{high-resolution formula} for~$p<d$ (in the same spirit of~\cite{DereichScheutzowSchottstedt13} for random empirical quantization). %

\begin{theorem} \label{thm:main1}
	Assume that~$1 \le p < d$ and let~$p^*\coloneqq \frac{dp}{d-p}$ be the Sobolev conjugate of~$p$. Let~$\mu$ be a probability measure on~$\R^d$ and assume that, for some~$\theta > p^*$, the~$\theta^\text{th}$ moment of~$\mu$ is finite. Let~$\rho$ be the density of the absolutely continuous part of~$\mu$ and let~$\supp \mu^s$ be the support of the singular part of~$\mu$ (w.r.t. the Lebesgue measure~$\LebRd$). Then:
	\begin{align}
		\label{eq:mainbelow} \tag{L}
		q_{p,d} \left(\int_{\R^d \setminus \supp (\mu^s)} \rho^\frac{d-p}{d} \, \dif \LebRd \right)^{1/p} &\le \liminf_{n \to \infty} n^{1/d} \eqe{p}{n}(\mu) \comma \\
		\label{eq:mainabove} \tag{U} \limsup_{n \to \infty} n^{1/d} \eqe{p}{n}(\mu) &\le \eqc{p}{d} \left(\int_{\R^d} \rho^\frac{d-p}{d} \, \dif \LebRd \right)^{1/p} \comma
	\end{align}
	where
	\begin{equation} \label{eq:main10} 
		q_{p,d} \coloneqq \inf_{n \in \N_1} n^{1/d} e_{p,n}\bigl(\LebRd|_{[0,1]^d}\bigr) > 0 \quad \text{and} \quad \eqc{p}{d} \coloneqq \inf_{n \in \N_1} n^{1/d} \eqe{p}{n}\bigl(\LebRd|_{[0,1]^d}\bigr) > 0 \fstop
	\end{equation}
\end{theorem}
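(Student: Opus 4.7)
The plan is to prove (U) and (L) separately, both via a cube-tiling localisation of~$\mu$. Partition~$\R^d$ by axis-aligned cubes~$(Q_k)$ of side~$h = h(n) \to 0$ chosen so that~$n h^d \to \infty$ (say~$h = n^{-\alpha}$ for some~$\alpha \in (0, 1/d)$), and set~$m_k := \mu(Q_k) \approx \rho(x_k) h^d$. The governing heuristic is that on each~$Q_k$ the measure~$\mu|_{Q_k}$ is nearly uniform, the empirical mass constraint forces roughly~$n m_k$ atoms per cube, and the per-cube error therefore scales as~$h \cdot m_k^{1/p} \cdot (nm_k)^{-1/d}$. Taking~$p$-th powers and summing produces the Riemann sum
\[
h^p\, n^{-p/d} \sum_k m_k^{(d-p)/d} \;\longrightarrow\; n^{-p/d} \int_{\R^d} \rho^{(d-p)/d}\, \dif \LebRd \comma
\]
which is how the exponent~$(d-p)/d$ emerges.

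For (U), I would first reduce to a compactly supported, bounded, continuous~$\rho$ by an~$L^1$-approximation (stable on both sides; the singular part of~$\mu$ must be handled separately since the RHS depends only on~$\rho$, typically by an ad hoc argument approximating~$\mu^s$ by its own good empirical quantizer). On the tiling, distribute~$n_k := \lfloor n m_k \rfloor$ atoms per cube (routing the~$O(\#\{Q_k\})$ integer surplus to a single cube) and place them inside~$Q_k$ according to a near-optimal empirical quantizer of the uniform measure on~$[0,1]^d$ rescaled to~$Q_k$; this uses the definition~\eqref{eq:main10} of~$\eqc{p}{d}$ and yields a per-cube contribution at most~$(\eqc{p}{d} + \varepsilon)^p\, m_k\, h^p\, n_k^{-p/d}$. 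The integer mismatch~$|n_k/n - m_k| \le 1/n$ induces cross-cube transport of total mass~$O(h^{-d}/n)$ over distance~$O(h)$, i.e.\ a~$W_p$-cost of order~$h^{1-d/p} n^{-1/p} = o(n^{-1/d})$ for any~$\alpha \in (0, 1/d)$. Passing to the limits~$n \to \infty$ and then~$\varepsilon \downarrow 0$ gives~(U).

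For (L), fix an optimal empirical quantizer~$\mu_n = (1/n) \sum_i \delta_{x_i}$ with Monge cells~$A_i = T^{-1}(x_i)$ satisfying~$\mu(A_i) = 1/n$, and decompose~$\eqe{p}{n}(\mu)^p$ cube by cube with~$n_k := \#\{i : x_i \in Q_k\}$. Cubes split into a ``well-matched'' regime~$n_k \approx n m_k$, in which the local cost is bounded below by the classical~$n_k$-point quantization cost of~$\mu|_{Q_k}$ (since~$\eqe{p}{n_k} \ge e_{p,n_k}$) and in turn, via the nonasymptotic Zador--Graf--Luschgy lower bound applied to the approximately uniform~$\mu|_{Q_k}$, by~$q_{p,d}^p\, h^p\, m_k\, n_k^{-p/d}(1 - o(1))$; and an ``ill-matched'' regime, in which the excess~$|n_k/n - m_k|$ of mass must be transported across~$\partial Q_k$, adding a~$W_p^p$-cost of order~$h^p$ per unit excess mass that only strengthens the bound. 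Cubes intersecting~$\supp (\mu^s)$ are discarded, matching the region excluded in the integrand of~(L). Summing the well-matched contributions then reproduces~$q_{p,d}^p\, n^{-p/d} \int_{\R^d \setminus \supp(\mu^s)} \rho^{(d-p)/d} \dif \LebRd$ in the limit. The main obstacle is precisely this dichotomy: quantitatively converting the cell-wise constraint~$\mu(A_i) = 1/n$ (imposed on Monge cells, not on the grid~$(Q_k)$) into the cube-wise matching~$n_k \approx n m_k$ while preserving the sharp constant~$q_{p,d}$, and simultaneously controlling boundary cubes and cubes where~$\rho$ is very small (on which the nonasymptotic Zador bound degrades and the model uniform approximation breaks down).
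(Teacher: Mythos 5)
Your upper-bound sketch is in essence the paper's argument: tile by cubes, replace $\mu$ by a piecewise-constant density with per-cube mass rounded to a multiple of $1/n$, use subadditivity of $\eqe{p}{n}^p$ together with the scale-invariance of \Cref{rmk:scaling} and \Cref{prop:uniformCube} on each cube, and control the surplus/singular part separately. The one place where the paper is more careful is the choice of cube scale: instead of fixing $h=n^{-\alpha}$ and first reducing to continuous $\rho$, it uses the Barthe--Bordenave diagonalization (\Cref{lemma:seqlemma}) to pick $k_n$ so that both $2^{k_n} n^{-1/d}\to 0$ and $n^{1/d}2^{-k_n}\norm{\rho-\rho_{k_n}}_{L^1}^{1/p}\to 0$; this sidesteps the need for an a priori $L^1$-rate. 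Your ``route the surplus to a single cube over distance $O(h)$'' is also internally inconsistent (a single designated cube is at distance $O(1)$), though this is repairable; the paper simply applies \Cref{thm:MerigotMirebeauChevallier} to the whole remainder of mass $n_0/n$.

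The lower bound has a genuine gap. Your decomposition fixes the cubes $Q_k$, sets $n_k=\#\set{i:x_i\in Q_k}$, and then claims the contribution of $Q_k$ to the transport cost is at least $e_{p,n_k}^p(\mu|_{Q_k})$ ``since $\eqe{p}{n_k}\ge e_{p,n_k}$''. That inequality is irrelevant here, because the local cost is \emph{not} $\eqe{p}{n_k}^p(\mu|_{Q_k})$: in an optimal plan $\gamma\in\Gamma(\mu,\mu_n)$, mass originating in $Q_k$ is free to be matched to atoms \emph{outside} $Q_k$, and atoms in $Q_k$ may receive mass from outside $Q_k$. So neither $\int_{Q_k\times\R^d}\norm{x-y}^p\dif\gamma$ nor the cost attributed to $\set{x_i\in Q_k}$ is bounded below by the $n_k$-point classical quantization cost of $\mu|_{Q_k}$. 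This is true even if $n_k=n\,m_k$ exactly, so the difficulty is not the one you single out (converting the Monge-cell constraint into $n_k\approx n\,m_k$); it is that a na\"ive cube decomposition of $W_p^p$ is not superadditive in the required sense. The paper's key device to repair this is the \emph{boundary Wasserstein pseudodistance} $Wb_{\Omega,p}$ of Figalli--Gigli: it is bounded above by $W_p$, it is geometrically superadditive over disjoint subdomains (\Cref{lemma:superadditivityWb}), and \Cref{lemma:lowerBoundWb} converts $Wb_{\Omega_i,p}(\mu,\mu_n)$ into a genuine classical quantization lower bound $e_{p,n_i+N}(\mu|_{\Omega_i^-})$ at the cost of a bounded number $N$ of extra grid points near $\partial\Omega_i$ (the reservoir). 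Without this, or an equivalent mechanism to localize the lower bound while allowing boundary flux, the per-cube estimate does not hold. Once $Wb$ is in place, there remains the quantitative control of $n_i = n\,\mu_n(\Omega_i)$ in terms of $\mu(\Omega_i)$, which the paper extracts from a coupling $\gamma\in\Gamma b_{\Omega^{(k)}}(\rho_{k,0},\mu_n)$ by splitting according to whether the matched pair is at distance $\le s$ or not, and choosing $s=\sqrt{2^{-k_n}n^{-1/d}}$; this, too, is absent from your sketch and is where the $(1+\tfrac{N2^{kd}}{\epsilon_2 n})^{-p/d}$-type corrections arise.
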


Note that the dependence on the measure in~\eqref{eq:BWZ} and in \Cref{thm:main1} is different; we will give a heuristic explanation of this phenomenon in \Cref{sec:heuristics}. It is also worth noting that the integral~$\int \rho^\frac{d-p}{d} \, \dif \LebRd$ has already appeared in the asymptotic study of other (related) problems in \emph{combinatorial optimization} \cite{BeardwoodHaltonHammersley59,Steele97,Yukich98,BartheBordenave13,GoldmanTrevisan24} and \emph{random (empirical) quantization} \cite{GrafLuschgy00,DereichScheutzowSchottstedt13}.

In general, the two constants~$q_{p,d}$ and~$\eqc{p}{d}$ in~\eqref{eq:main10} are not known explicitly, but it is possible to establish upper and lower bounds, see~\cite[Chapters~8~\&~9]{GrafLuschgy00}. We pose the following.%

\begin{conjecture} \label{conj:q}
	The identity~$q_{p,d} = \eqc{p}{d}$ holds (for every~$p \ge 1$ and~$d \in \N_1$).
\end{conjecture}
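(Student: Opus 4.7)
The plan is to prove the two inequalities separately. The easy direction $q_{p,d} \le \eqc{p}{d}$ is immediate, since every empirical quantizer with~$n$ equal-weight atoms is a particular discrete measure supported on at most~$n$ points; hence $e_{p,n}(\LebRd|_{[0,1]^d}) \le \eqe{p}{n}(\LebRd|_{[0,1]^d})$ for every~$n$, and taking the infimum yields the claim. The content of the conjecture is therefore the reverse inequality $\eqc{p}{d} \le q_{p,d}$.

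For the hard direction I would try to upgrade a near-optimal classical quantizer of $\LebRd|_{[0,1]^d}$ into an empirical one at asymptotically no extra cost. Fix $\varepsilon > 0$, pick $n$ and points $y_1, \dots, y_n$ with Voronoi cells $V_1, \dots, V_n$ realizing $n^{1/d} e_{p,n}(\LebRd|_{[0,1]^d}) \le q_{p,d} + \varepsilon$; then, for a large scale factor $m$, build a candidate empirical quantizer with $N = m n$ atoms by placing $k_i \approx m n |V_i|$ points inside each $V_i$, arranged optimally as an empirical quantizer of $\LebRd|_{V_i}$ with atoms of common mass $1/N$. Applying \Cref{thm:main1} to each rescaled uniform probability $\LebRd|_{V_i}/|V_i|$, the contribution of $V_i$ is asymptotically $\eqc{p}{d}^p |V_i|^{1+p/d}/k_i^{p/d}$; after substituting $k_i \sim m n |V_i|$ and summing, the total cost telescopes to $\sim \eqc{p}{d}^p/N^{p/d}$.

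The subtlety, and the main obstacle, is that this aggregation is \emph{a priori} circular: each subproblem reintroduces the empirical constant $\eqc{p}{d}$ rather than the classical $q_{p,d}$. To break the circularity one must exploit the geometry of the optimal classical quantizer, specifically that the Voronoi volumes $|V_i|$ are asymptotically equal to $1/n$. In $d = 1$ this holds trivially, since the optimal classical points of $\LebRd|_{[0,1]}$ are equispaced and the optimal classical quantizer is literally empirical, giving $q_{p,1} = \eqc{p}{1}$. In $d \ge 2$ the corresponding statement is a form of Gersho's conjecture, which is known only in low dimensions, so a general proof of \Cref{conj:q} will likely require a new \emph{quantitative mass-balance} estimate for the variational problem defining $q_{p,d}$, showing that near-optimal classical quantizers of the uniform measure have near-uniform cell volumes even if their cell shapes are not controlled. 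Identifying and proving such a stability statement is, in my view, where the technical core of the conjecture lies.
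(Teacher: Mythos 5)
You are addressing a statement that the paper itself records as a \emph{conjecture}: it offers no general proof of $q_{p,d}=\eqc{p}{d}$, and your text does not supply one either, as you yourself concede. The easy inequality $q_{p,d}\le\eqc{p}{d}$ is correct and is exactly the paper's observation following~\eqref{eq:disErrors}. For the reverse inequality, your aggregation scheme is, as you note, circular: bounding the contribution of each Voronoi cell through \Cref{thm:main1} or \Cref{cor:uniform} reintroduces the constant $\eqc{p}{d}$, so after summation you recover only $\eqc{p}{d}\le\eqc{p}{d}$ and no information about $q_{p,d}$. The ingredient you propose to break the circularity --- a quantitative statement that near-optimal \emph{classical} quantizers of $U_d$ have asymptotically equal cell masses --- is precisely the Gersho-type uniformity that the paper explicitly says is not available in sufficient strength (the known weak forms of Gersho's Conjecture do not settle \Cref{conj:q}). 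So your proposal correctly locates the obstruction, and your $d=1$ observation matches \Cref{ex:1dim}, but the general statement remains unproved by your argument.

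It is worth contrasting your programme with what the paper actually proves in the one nontrivial case it settles, $d=2$ (\Cref{thm:q2}). Rather than establishing a stability or mass-balance property of optimizers, the paper constructs a competitor for which equal masses hold \emph{exactly}: the centers of a hexagonal tiling whose cells have area $1/n$, so each atom automatically carries mass $1/n$. Fejes T\'oth's moment theorem (\Cref{thm:fejestoth}) combined with Zador's Theorem (\Cref{thm:BWZ}) shows this configuration is asymptotically optimal for the \emph{classical} problem, yielding $\limsup_n \sqrt{n}\,\eqe{p}{n}(U_A)\le q_{p,2}\sqrt{\abs{A}}$; the boundary strip of width of order $n^{-1/2}$, which cannot be tiled, is cut into roughly $\sqrt{n}$ pieces of equal measure whose contribution to the $p^{\text{th}}$-power cost is of order $n^{-1/2}\cdot n^{-p/2}\cdot n^{-(p/2-p/2)}$, hence negligible. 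In other words, the paper sidesteps exactly the stability estimate you identify as the technical core, by exhibiting a quantizer with uniform cells instead of proving that optimal quantizers are nearly uniform; in $d\ge 3$ no analogue of Fejes T\'oth's theorem is known, which is why the identity remains open there and why your sketch, while a reasonable research programme, does not constitute a proof.
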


This is tightly linked with a famous conjecture by A.~Gersho~\cite{Gersho79}, which, in essence, states the following: if~$A \subseteq \R^d$ is convex and we denote by~$U_A$ its uniform measure, then the optimal quantizers~$\mu_n$ for~$e_{p,n}(U_A)$ are asymptotically \emph{uniform}, and ``most'' of the Voronoi regions generated by~$\supp (\mu_n)$ are congruent to one another. Weak versions of Gersho's Conjecture have been proven in~\cite{GrafLuschgyPages12,Zhu11,Zhu20}, but they seem to be insufficient to settle \Cref{conj:q}. As noted in~\cite[Remark~2]{DereichScheutzowSchottstedt13}, proving the equality of the constants appearing in the upper and lower bounds \guillemotleft{}\emph{seems to be a general open problem in transport problems}\guillemetright{} (see also~\cite{BartheBordenave13,GoldmanTrevisan24}).

Nonetheless, with \Cref{ex:1dim} and \Cref{thm:q2} (see below), we show that \Cref{conj:q} is true for~$d=1$ and~$d=2$.

\subsection{Existence of the limit} \label{sec:existenceLimit}

The second matter, namely the convergence of the renormalized error (i.e.,~$n^{1/d} \eqe{p}{n}(\mu)$ if the speed of convergence of~$\eqe{p}{n}(\mu)$ is~$n^{-1/d}$), remains, in general, an open question. For~$p < d$, however, \Cref{thm:main1} reduces this problem to \Cref{conj:q}. Indeed, assuming \Cref{conj:q}, that~$1\le p<d$, that the~$\theta^\text{th}$ moment of~$\mu$ is finite for some~$\theta > p^*$, and~$\int_{\supp (\mu^s)} \rho \, \dif \LebRd = 0$, the limit of~$n^{1/d} \eqe{p}{n}(\mu)$ exists by the combination of~\eqref{eq:mainbelow} and~\eqref{eq:mainabove}.

Moreover, with the results of this work, we are able to prove the limit existence in some cases:

\begin{enumerate}
	\item for every~$p \ge 1$ and~$d \in \N_1$, when~$\mu$ is the uniform measure on a cube, see \Cref{prop:uniformCube};
	\item when~$1 \le p < d$ and~$\mu$ is the uniform measure on a bounded nonnegligible Borel set, see \Cref{cor:uniform};
	\item \label{2dUnif} for every~$p \ge 1$, when~$d = 2$ and~$\mu$ is the uniform measure on a set which is bi-Lipschitz equivalent to a closed disk;
	\item \label{2dGen} when~$1 \le p < d = 2$, the~$\theta^\text{th}$ moment of~$\mu$ is finite for some~$\theta > p^*$, and~$\supp (\mu^s)$ is~$\mu^a$-negligible, where~$\mu^a$ and~$\mu^s$ are the absolutely continuous and singular parts of~$\mu$, respectively.
\end{enumerate}

In all these cases, the upper bound~\eqref{eq:mainabove} is attained in the limit:
\begin{equation} \label{eq:limit}
	\lim_{n \to \infty} n^{1/d} \eqe{p}{n}(\mu) = \eqc{p}{d} \left(\int_{\R^d} \rho^\frac{d-p}{d} \, \dif \LebRd \right)^{1/p} \fstop
\end{equation}

The points~\eqref{2dUnif},\eqref{2dGen} descend directly from \Cref{thm:main1} and the following.

\begin{theorem} \label{thm:q2}
	Let~$A \subseteq \R^2$ be bi-Lipschitz equivalent to a closed disk\footnote{Note that every convex body~$A$ is bi-Lipschitz equivalent to a closed disk: further assuming, without loss of generality, that~$0$ lies in the interior part of~$A$, the map
			\[
			A \ni x \longmapsto \frac{\inf\set{r>0 \, : \, x \in rA}}{\norm{x}} x
			\]
			(deformation by the Minkowski functional) is bi-Lipschitz onto the unit disk.
		} and let~$U_A$ be its uniform measure. Then, for every~$p \ge 1$, the limit of~$\sqrt{n} \eqe{p}{n}(U_A)$ exists and coincides with~$\lim_{n \to \infty} \sqrt{n} e_{p,n}(U_A)$, that is (by~\eqref{eq:BWZ}),
	\begin{equation}
		\lim_{n \to \infty} \sqrt{n} \eqe{p}{n}(U_A) = q_{p,2} \sqrt{\abs{A}} \fstop
	\end{equation}
	
	In particular, we can choose~$A \coloneqq [0,1]^2$ and obtain~$q_{p,2} = \eqc{p}{2}$.
\end{theorem}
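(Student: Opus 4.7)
I would sandwich $\sqrt{n}\,\eqe{p}{n}(U_A)$ between $q_{p,2}\sqrt{|A|}$ from below and above.

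\emph{Lower bound.} Since $\eqe{p}{n}(U_A)\ge e_{p,n}(U_A)$ (the empirical problem minimizes over a subclass of the measures allowed for $e_{p,n}$), Zador's theorem~\eqref{eq:BWZ} applied to $\rho=\1_A/|A|$ yields $\lim_n \sqrt{n}\,e_{p,n}(U_A)=q_{p,2}\sqrt{|A|}$, hence $\liminf_n \sqrt{n}\,\eqe{p}{n}(U_A)\ge q_{p,2}\sqrt{|A|}$.

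\emph{Upper bound on the unit square.} I would first prove the matching bound on $[0,1]^2$ in two steps. \textbf{Step 1 (subadditivity).} Fix $m\in\N$ and set $n=mk^2$. Partition $[0,1]^2$ into $k^2$ congruent subsquares of side $1/k$ and place inside each a rescaled optimal empirical $m$-quantizer; the resulting empirical measure on $n$ atoms satisfies, by joint convexity of $W_p^p$ along the decomposition with weights $1/k^2$ and the linear scaling $W_p \propto 1/k$,
\[
\sqrt{mk^2}\,\eqe{p}{mk^2}([0,1]^2) \le \sqrt{m}\,\eqe{p}{m}([0,1]^2).
\]
For $n\not\equiv 0 \pmod m$, a rectangular gap-filler using two cell sizes $m/n$ and $(m+1)/n$ (or a thin boundary strip partitioned into pieces of area $1/n$ and diameter $O(n^{-1/2})$) absorbs the $O(\sqrt{mn})$ residual atoms at $o(n^{-1/2})$ cost in $W_p$. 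Taking $k\to\infty$ then infing over $m$ gives $\limsup_n \sqrt{n}\,\eqe{p}{n}([0,1]^2)\le \eqc{p}{2}$. \textbf{Step 2 (two-dimensional identification).} The decisive step sharpens $\eqc{p}{2}$ to $q_{p,2}$: I would instantiate the tiling construction by placing atoms inside each subsquare on a regular hexagonal lattice calibrated so that each Voronoi cell has area $1/n$. The two-dimensional case of Gersho's conjecture (L.~Fejes T\'oth's theorem and its $p$-extensions) identifies the asymptotic Voronoi-cell cost with $q_{p,2}$, while hexagonal symmetry forces each cell to carry mass exactly $1/n$ away from the subsquare boundaries, making the construction automatically empirical; $O(\sqrt{n})$ boundary cells contribute $o(n^{-1/2})$. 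This yields $\eqc{p}{2}\le q_{p,2}$, and combined with $q_{p,2}\le\eqc{p}{2}$ (from $\eqe\ge e$) gives $\eqc{p}{2}=q_{p,2}$.

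\emph{Passage to general $A$.} Because $\partial A$ is the Lipschitz image of a circle it has finite one-dimensional Hausdorff measure, so its $\delta$-neighborhood has area $O(\delta)$. Tile the interior of $A$ by axis-aligned subsquares of side $\sqrt{|A|}/k$ covering all but an $O(1/k)$-area strip along $\partial A$, apply the cube construction inside each subsquare, and close with single-atom cells in the strip. Letting $k\to\infty$ and then $m\to\infty$ gives the upper bound $q_{p,2}\sqrt{|A|}$, closing the sandwich. The specialization $A=[0,1]^2$ yields $\lim_n \sqrt{n}\,\eqe{p}{n}([0,1]^2)=q_{p,2}$; since $\eqc{p}{2}=\inf_n \sqrt{n}\,\eqe{p}{n}([0,1]^2) \le \lim_n\sqrt{n}\,\eqe{p}{n}([0,1]^2) = q_{p,2}$ and conversely $q_{p,2}\le \eqc{p}{2}$, we conclude $q_{p,2}=\eqc{p}{2}$.

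\emph{Main obstacle.} The decisive, genuinely two-dimensional step is $\eqc{p}{2}\le q_{p,2}$ via the hexagonal lattice: subadditivity alone gives only the weaker $\limsup\le \eqc{p}{2}$, and closing the gap to $q_{p,2}$ requires planar geometric input (hexagonal optimality) with no higher-dimensional analogue. The remaining ingredients---the rectangular gap-filler for $n\not\equiv 0\pmod m$, boundary-strip corrections, and the cube-to-$A$ reduction---are essentially routine bookkeeping given the regularity of $\partial A$.
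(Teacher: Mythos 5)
Your central mechanism coincides with the paper's: the lower bound from $e_{p,n}\le\eqe{p}{n}$ plus Zador, and the upper bound by tiling with regular hexagons of area exactly $1/n$, atoms at their centers, with Fejes T\'oth's moment theorem (\Cref{thm:fejestoth}) identifying the per-cell cost with $q_{p,2}^p n^{-p/2}$; note that your Step 1 is redundant, since Step 2 already gives $\limsup_n\sqrt n\,\eqe{p}{n}\le q_{p,2}\le\eqc{p}{2}$.

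The genuine gap is in what you label ``essentially routine bookkeeping''. Because every atom must carry mass exactly $1/n$, and because for $p\ge 2$ even a single atom matched to mass at distance of order one costs $n^{-1}\gtrsim n^{-p/2}$, neither the boundary strip nor any divisibility mismatch (e.g.\ $n\neq mk^2$, or $n_i/n\neq 1/k^2$ in your square-by-square reduction inside $A$) can be ``absorbed'' by long-range reallocation: the leftover region must be partitioned into \emph{exactly} as many pieces as the remaining atoms, each of $U_A$-mass \emph{exactly} $1/n$ and diameter $O(n^{-1/2})$. For a set that is merely bi-Lipschitz equivalent to a disk, producing such a partition is where most of the actual proof lives: the paper pulls the strip back to the disk, shows the angular area function $g(\theta)$ of the strip is strictly increasing with $g(\theta_2)-g(\theta_1)\gtrsim(\theta_2-\theta_1)n^{-1/2}$ (see~\eqref{eq:thm:q2:6}), which simultaneously delivers exact mass $1/n$ and diameter $O(n^{-1/2})$ for the angular pieces (see~\eqref{eq:thm:q2:9}), and it even requires a topological input (\Cref{lemma:jordan}, via Jordan--Sch\"onflies) to know that the discarded hexagons are precisely those near $T(\partial D)$. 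Your observation that the $\delta$-neighborhood of $\partial A$ has area $O(\delta)$ gives the right \emph{count} of boundary cells, but not their existence with exact mass and small diameter, and ``single-atom cells in the strip'' of diameter comparable to the strip width $\sim 1/k$ would contribute $O(k^{-(p+1)})$, which is not $o(n^{-p/2})$ for fixed $k$. So the hexagon/Fejes T\'oth step you call decisive is indeed the paper's key idea, but the exact-mass boundary construction is a missing step, not bookkeeping; as written, the passage to general $A$ (and, for $p\ge2$, even the gap-filler for general $n$ on the square) does not go through without it.
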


By~\cite[Theorem~5.15]{XuBerger19} (restated as \Cref{thm:xuberger} below), the limit exists also when~$d=1$ and the upper quantile function of~$\mu$ is absolutely continuous.

\subsection{Asymptotic behavior for~$p \in [1,\infty)$}

As a first step towards the proof of \Cref{thm:main1}, we will prove~\eqref{eq:limit} for the uniform measure~$\LebRd|_{[0,1]^d}$ \emph{for every~$p \ge 1$} (\Cref{prop:uniformCube}). In particular, we have\footnote{\emph{For~$d \ge 3$}, the bound~\eqref{eq:boundUnif} can also be easily derived from the theory of random empirical quantization, see~\cite[Formula~(8)]{Ledoux23}.}
\begin{equation} \label{eq:boundUnif}
\limsup_{n \to \infty} n^{1/d} \eqe{p}{n} \bigl(\LebRd|_{[0,1]^d}\bigr) < \infty \comma
\end{equation}
from which we derive one corollary which may be of independent interest. Note that, while \Cref{thm:main1} assumes~$p < d$, this corollary applies when~$p \ge d$ as well.

\begin{corollary} \label{cor:pGd2}
	Let~$\tilde \Omega, \Omega$ be open bounded sets in~$\R^d$ and let~$\mu = \rho \LebRd$ be an absolutely continuous probability measure concentrated on~$\Omega$. Assume that:
	\begin{enumerate}
		\item the set $\tilde \Omega$ is convex with~$C^{1,1}$ boundary;
		\item there exists a diffeomorphism~$M \colon \tilde \Omega \to \Omega$ of class~$C^1$ with (globally) H\"older continuous and uniformly nonsingular Jacobian;
		\item the restriction~$\rho|_{\Omega}$ is uniformly positive and H\"older continuous (globally on~$\Omega$).
	\end{enumerate}
	Then, for every~$p \ge 1$,
	\begin{equation}
		0 < \liminf_{n \to \infty} n^{1/d} \eqe{p}{n}(\mu) \le \limsup_{n \to \infty} n^{1/d} \eqe{p}{n}(\mu) < \infty \fstop
	\end{equation}
\end{corollary}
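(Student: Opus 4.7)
For the lower bound, observe that hypothesis~(3) makes $\rho$ uniformly positive on the bounded set $\Omega$, so $\mu$ is a compactly supported probability measure with non-trivial absolutely continuous part. Zador's theorem~\eqref{eq:BWZ} then yields
\[
\lim_{n\to\infty} n^{1/d} e_{p,n}(\mu) = q_{p,d} \left(\int \rho^{d/(d+p)}\, \dif \LebRd\right)^{(d+p)/(dp)} > 0,
\]
and since $e_{p,n}(\mu) \le \eqe{p}{n}(\mu)$, this forces $\liminf_{n\to\infty} n^{1/d} \eqe{p}{n}(\mu) > 0$ for every $p \ge 1$.

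For the upper bound, the plan is to construct a bi-Lipschitz homeomorphism $\Phi \colon \tilde\Omega \to \Omega$ with $\Phi_\# U_{\tilde\Omega} = \mu$, and then to reduce the estimate on $\eqe{p}{n}(\mu)$ to one for $U_{\tilde\Omega}$. I would build $\Phi$ in two steps. First, pull $\mu$ back along $M^{-1}$: the measure $\tilde\mu \coloneqq (M^{-1})_\# \mu$ has density $\tilde\rho = (\rho \circ M)\abs{\det DM}$ on $\tilde\Omega$, which by hypotheses~(2)--(3) is H\"older continuous and bounded uniformly above and below. Second, since $\tilde\Omega$ is convex with $C^{1,1}$ boundary, Caffarelli's boundary regularity for the Monge--Amp\`ere equation produces a Brenier map $T \colon \tilde\Omega \to \tilde\Omega$ of class $C^{1,\alpha}(\overline{\tilde\Omega})$ with $T_\# U_{\tilde\Omega} = \tilde\mu$; in particular $T$ is bi-Lipschitz. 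Composing with $M$ (itself bi-Lipschitz on $\tilde\Omega$ thanks to hypothesis~(2)) gives the required $\Phi \coloneqq M \circ T$, and bi-Lipschitz invariance of $W_p$ yields
\[
\eqe{p}{n}(\mu) \le \Lip(\Phi) \cdot \eqe{p}{n}(U_{\tilde\Omega}).
\]

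It remains to show $\eqe{p}{n}(U_{\tilde\Omega}) = O(n^{-1/d})$. I would adapt the argument behind \Cref{prop:uniformCube}: partition $\tilde\Omega$ into $n$ cells of equal $U_{\tilde\Omega}$-mass $1/n$ and diameter $O(n^{-1/d})$. In the interior, use the standard axis-parallel grid of cubes of side $\sim n^{-1/d}$; the Lipschitz character of $\partial\tilde\Omega$ ensures that the remaining tubular neighbourhood has measure $O(n^{-1/d})$, and its $C^{1,1}$ smoothness permits a normal-coordinate straightening that tiles this strip by cells of comparable diameter. Placing a Dirac at the centre of each cell produces an empirical quantizer whose $p$-Wasserstein cost is bounded by the sum of $n$ terms of size $(n^{-1/d})^p \cdot (1/n)$, giving the required $O(n^{-1/d})$ bound.

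The main obstacle is the Caffarelli step: the argument requires the Brenier map to be bi-Lipschitz \emph{up to the boundary} of $\tilde\Omega$, which is exactly why the hypotheses impose convexity together with $C^{1,1}$ boundary and H\"older continuous densities bounded away from $0$ and $\infty$. The integer-rounding technicality when producing cells of mass exactly $1/n$ is secondary and handled by small local perturbations of the partition.
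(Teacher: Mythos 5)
Your lower bound is correct and matches the paper's (implicit) argument: Zador's Theorem plus $e_{p,n}\le\tilde e_{p,n}$ yields $\liminf n^{1/d}\tilde e_{p,n}(\mu)>0$.

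For the upper bound, the route you propose is structurally different from the paper's, and it has a genuine gap. You reduce the estimate to bounding $\tilde e_{p,n}\bigl(U_{\tilde\Omega}\bigr)$ by $O(n^{-1/d})$, but that bound is itself a special case of the corollary (take $\Omega=\tilde\Omega$, $M=\Id$, $\rho$ constant); it is not a free ingredient. It is also not easy: for $p\ge d$ the decay $n^{-1/d}$ is \emph{false} for general compactly supported measures (cf.\ \Cref{ex:distant} and \Cref{thm:MerigotMirebeauChevallier}, which only give $n^{-1/p}$ for $p>d$), so any proof must exploit the convex $C^{1,1}$ geometry in a serious way. Your sketched construction — interior grid cubes plus a ``normal-coordinate straightening'' of a boundary strip tiled into cells of mass exactly $1/n$ and diameter $O(n^{-1/d})$ — is exactly where the work lies, and the sketch leaves the hard parts unaddressed: the interior cubes will not have mass exactly $1/n$ once $\tilde\Omega$ is not a cube, the boundary strip has variable thickness after removing them, and one must produce \emph{exactly} $n$ pieces with \emph{exactly} mass $1/n$ each, not approximately. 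For $d=2$ a construction of this type is the content of the entire proof of \Cref{thm:q2} in Section~\ref{sec:limit}, which is quite involved; for $d\ge 3$ the paper avoids it altogether.

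The paper's actual route sidesteps this by reducing to $U_d$ on a cube, where $\limsup n^{1/d}\tilde e_{p,n}(U_d)<\infty$ is already known from \Cref{prop:uniformCube}. Since the cube does not have $C^{1,1}$ boundary, one cannot apply OT regularity directly with $U_d$ as source; the paper therefore builds an explicit Lipschitz map~$T_0$ from the cross-polytope $\set{\norm{\cdot}_1<1}$ (a rotated, rescaled cube) to the Euclidean ball, such that~$\mu_0\coloneqq (T_0)_\# U(\set{\norm{\cdot}_1<1})$ has Lipschitz, uniformly positive density on the ball. Then the Chen--Liu--Wang regularity theorem (not Caffarelli: Caffarelli's classical boundary result requires $C^2$ uniformly convex domains, whereas the hypotheses here give only convex with $C^{1,1}$ boundary, which is precisely the setting of Chen--Liu--Wang) furnishes a Lipschitz transport map~$T_1$ from~$\mu_0$ to~$\tilde\mu$, and finally~$M$ is Lipschitz on~$\tilde\Omega$. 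The composite Lipschitz map pushes the uniform measure on a cube to~$\mu$, and \Cref{cor:lipPF} with \Cref{prop:uniformCube} finish the proof. If you wish to salvage your approach, you would either need to prove the equal-mass bounded-diameter partition of a convex $C^{1,1}$ body rigorously in all dimensions, or adopt the paper's pushforward chain.
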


	For general measures and~$p \ge d$, it is possible, and often expected, that
	\[
	\limsup_{n \to \infty} n^{1/d} \eqe{p}{n}(\mu) = \infty \comma
	\]
	see~\cite[Example~5.8 \& Remark 5.22]{XuBerger19} and \Cref{ex:distant} below. \Cref{cor:pGd2} states that the error convergence is still fast (of order~$n^{-1/d}$) if the measure is ``smooth and well-concentrated''.

\begin{remark}
	\Cref{cor:pGd2} applies in particular when~$\Omega$ itself is convex with~$C^{1,1}$ boundary and~$\rho|_\Omega$ is uniformly positive and H\"older: the identity is admissible as a diffeomorphism~$M$ onto~$\Omega$.
\end{remark}

\begin{remark}
	The proof of \Cref{cor:pGd2} relies on a theorem by~S.~Chen, J.~Liu, and X.-J.~Wang~\cite{ChenLiuWang21} on the regularity of optimal transport maps. Using other results from this field, e.g.~\cite{ChenLiuWang19,ChenLiuWang18}, it is possible to adapt \Cref{cor:pGd2} to other sets of assumptions.
\end{remark}

\subsection{Nonasymptotic upper bound}

Along the way, we also prove a nonasymptotic upper bound on the optimal empirical quantization error. This is analogous to what is known as \emph{Pierce's Lemma} \cite{Pierce70} in classical quantization.

\begin{theorem} \label{thm:nonasy}
	Under the assumptions of \Cref{thm:main1}, there exists a constant~$c_{p,d,\theta}$ (independent of~$\mu$ and~$n$) such that
	\begin{equation} \label{eq:thm:nonasy:0}
		n^{1/d} \eqe{p}{n}(\mu) \le c_{p,d,\theta} \left( \int \norm{x}^\theta \, \dif \mu \right)^{1/\theta} \comma \qquad n \in \N_1 \fstop
	\end{equation}
\end{theorem}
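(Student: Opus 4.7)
The plan is to adapt the strategy underlying the classical Pierce lemma for $e_{p,n}$ to the empirical setting, with the uniform-cube bound \eqref{eq:boundUnif} (from \Cref{prop:uniformCube}) playing the role that the trivial covering argument plays in the classical case. The first step is to promote \eqref{eq:boundUnif} from uniform measures to arbitrary probabilities on a cube: I claim that for every probability $\nu$ supported in a cube $Q_s \subset \R^d$ of side $s$ and every $m \in \N_1$,
\[
\eqe{p}{m}(\nu) \le C_{p,d}\, s\, m^{-1/d}.
\]
For $\nu = U_{Q_s}$ this is \Cref{prop:uniformCube} combined with scaling; the general case can be obtained either by adapting the partition construction underlying \Cref{prop:uniformCube} with a H\"older-type estimate on the energy $\sum_i \int_{Q_i}\norm{x-x_i}^p\,d\nu$ to absorb the arbitrary mass distribution of $\nu$ into a universal constant, or by invoking a random empirical quantization bound in the spirit of~\cite{DereichScheutzowSchottstedt13} together with the observation that $\int_{Q_s} \rho^{(d-p)/d}\,d\LebRd \le s^p$ for every probability density $\rho$ on $Q_s$ (by H\"older, since $p < d$).

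With this worst-case cube bound in hand, I would set $R \coloneqq \bigl(\int \norm{x}^\theta\,d\mu\bigr)^{1/\theta}$ and use the dyadic annuli $A_0 \coloneqq B(0, R)$ and $A_k \coloneqq B(0, R\cdot 2^k) \setminus B(0, R\cdot 2^{k-1})$ for $k \ge 1$. Markov's inequality yields $m_k \coloneqq \mu(A_k) \le 2^{-(k-1)\theta}$ for $k \ge 1$, while each $A_k$ is covered by a cube $Q_k$ of side $s_k \asymp R\cdot 2^k$. Setting $n_k \coloneqq \lfloor n m_k \rfloor$, I would split $\mu = \sum_k \mu_k$ with $\mu_k \le \mu|_{A_k}$ of mass exactly $n_k/n$, absorbing the small mass defect $\sum_k (m_k - n_k/n)$ into $\mu_0$ via a short-range mass redistribution whose cost is dominated by the main term (only finitely many annuli contribute at scale $1/n$, and the cheapest relocations are those near the origin).

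Applying the Step 1 bound to each normalized probability $n\mu_k/n_k$ on $Q_k$, I obtain empirical quantizers $\nu_k$ with $n_k$ atoms of mass $1/n$ satisfying
\[
W_p^p(\mu_k, \nu_k) \le C\, m_k\, s_k^p\, n_k^{-p/d} \le C'\, m_k^{1-p/d}\, R^p\, 2^{kp}\, n^{-p/d},
\]
using $n_k \asymp n m_k$. Summing over $k$ via the mass-matched subadditivity of $W_p^p$ and applying the Markov bound $m_k \le 2^{-(k-1)\theta}$ reduces the problem to summability of $\sum_{k\ge 0} 2^{-k\epsilon}$ where $\epsilon \coloneqq \theta(d-p)/d - p$; this is strictly positive precisely when $\theta > p^*$, and the resulting bound is $\eqe{p}{n}(\mu) \le c_{p,d,\theta} R n^{-1/d}$, which is \eqref{eq:thm:nonasy:0}. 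The main obstacle I expect is the first step, namely upgrading the uniform-cube bound to arbitrary probabilities on a cube without sacrificing the rate $m^{-1/d}$; the mass-matching issue in the annular decomposition is secondary and handled by cheap redistribution, and the integrability threshold $\theta > p^*$ emerges naturally as the convergence condition for the dyadic geometric series.
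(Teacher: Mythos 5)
Your overall architecture is sound and genuinely different from the paper's. The dyadic-annulus decomposition with Markov's inequality, the mass-matching $n_k=\lfloor n\mu(A_k)\rfloor$, and the absorption of the defect (which automatically has mass $j/n$, $j\in\N_0$) into atoms near the origin all work: the defect cost is $\lesssim R^p n^{p/\theta-1}\le R^p n^{-p/d}$ since $\theta>p^*$, and your exponent $\epsilon=\theta(d-p)/d-p>0$ is exactly the condition $\theta>p^*$, so the geometric series closes and yields~\eqref{eq:thm:nonasy:0}. The paper instead performs a greedy extraction: using~\cite[Lemma~1]{Chevallier18} it peels off $n-1$ submeasures of mass exactly $1/n$ from nested cubes $[-r_k,r_k]^d$ with $r_k=(2nM/(n-k))^{1/\theta}$ tuned to the remaining mass, quantizes each by a single atom, and sums $\sum_k r_k^p(n-k)^{-p/d}/n$, which converges precisely because $\frac{p}{\theta}+\frac{p}{d}<1$. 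Your layered (Pierce/Dereich--Scheutzow--Schottstedt style) decomposition and the paper's greedy extraction are two implementations of the same idea, and both reduce to the same integrability threshold.

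The genuine weak point is the step you yourself flag as the main obstacle: the worst-case cube bound $\eqe{p}{m}(\nu)\le C_{p,d}\,s\,m^{-1/d}$ for an \emph{arbitrary} probability $\nu$ on a cube of side $s$ when $p<d$. Neither of your proposed derivations delivers it. The uniform-cube result (\Cref{prop:uniformCube} / \eqref{eq:boundUnif}) does not upgrade to arbitrary $\nu$ by a H\"older manipulation of the cell energies: for a general measure you have no a priori link between the mass a subcube carries and the number of atoms you can afford to place there, and bridging that gap is exactly the pigeonhole/extraction argument of M\'erigot--Mirebeau and Chevallier, not a variant of the tiling proof for $U_d$. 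The random-quantization route via~\cite{DereichScheutzowSchottstedt13} only covers $p<d/2$ (cf.\ \Cref{thm:DereichScheutzowSchottstedtPierce}), so it cannot close the range $d/2\le p<d$. Fortunately the claim you need is not something to prove at all: it is precisely \Cref{thm:MerigotMirebeauChevallier} (\cite[Proposition~12]{MerigotMirebeau16}, \cite[Theorem~3]{Chevallier18}), restricted to $p<d$ and combined with the scaling of \Cref{rmk:scaling}. Citing that theorem (whose proof rests on the same extraction lemma the paper's argument uses directly) repairs the gap, and with it your proof is complete; as written, however, the justification of the key lemma is missing rather than merely sketched.
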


\subsection{Related literature} The theory of quantization has been studied since the 1940s by electrical engineers interested in the compression of analog signals. Early works include~\cite{Shannon48} by C.~E.~Shannon,~\cite{OliverPierceShannon48} by B.~M.~Oliver, J.~R.~Pierce and C.~E.~Shannon,~\cite{Bennett48} by W.~R.~Bennett, and~\cite{PanterDite51} by P.~F.~Panter and W.~Dite. We refer the reader to~\cite{GrayNeuhoff98} for a survey of the related literature in the fields of \emph{signal processing} and \emph{information theory} until the late 1990s.

\emph{Algorithms} to solve the quantization problem in~$\R^d$ are known since the works of H.~Steinhaus~\cite{Steinhaus56} and S.~P.~Lloyd~\cite{Lloyd82}. Arguably, the most popular ones are \emph{Lloyd's method} (also known as~\emph{$k$-means algorithm}) and the \emph{Competitive Learning Vector Quantization}, see~\cite[Section~3]{Pages15}.

Over the years, quantization theory has found applications to~\emph{data science} (\emph{clustering}, \emph{recommender systems}, etc.)~\cite{LiuPages20,LiuDongXiaoChenHuZhuZhuSakaiWu24}, mathematical models in \emph{economics}~\cite{BollobasStern72,Bollobas73,ButtazzoSantambrogio09}, \emph{computer graphics}~\cite{BonneelDigne23}, \emph{geometry} (approximation of convex bodies and \emph{Alexandrov's Problem})~\cite{Gruber04,MerigotOudet16}. The survey~\cite{Pages15} describes its applications to numerics, particularly to~\emph{numerical integration}~\cite{Pages98}, \emph{numerical probability}~\cite{PagesPham05}, and numerical solving of (stochastic) (partial) \emph{differential equations}, relevant, e.g.,~in \emph{mathematical finance}~\cite{PagesPrintems09}. Quantization has been studied also beyond the finite-dimensional Euclidean setting, particularly in \emph{Riemannian manifolds}~\cite{Gruber01,Gruber04,Kloeckner12,Iacobelli16,AydinIacobelli24,LeBrigantPuechmorel19,LeBrigantPuechmorel19bis} and \emph{infinite-dimensional Banach spaces} (\emph{functional quantization}), see~\cite{LuschgyPages23} and references therein.%

For a more comprehensive and detailed picture of this extensive mathematical subject, we refer to the following monographs. In chronological order:~\cite{GershoGray92} by~A.~Gersho and~R.~M.~Gray,~\cite{GrafLuschgy00} by~S.~Graf and~H.~Luschgy, and~\cite{LuschgyPages23} by~H.~Luschgy and~G.~Pag\`es.

As previously noted, asymptotics for~$\eqe{p}{n}(\mu)$ have been investigated in~\cite{JourdainReygner16,MerigotMirebeau16,Chevallier18,XuBerger19,GilesHefterMayerRitter19,BencheikhJourdain22,BencheikhJourdain22bis,GilesSheridan-Methven23,GilesHefterMayerRitter19bis,BrownSteinerberger20}, see also \Cref{sec:previous}. Algorithms and other theoretical properties of optimal empirical quantization (and of the slightly more general \emph{capacity-constrained quantization}\footnote{In this version, the competitors~$\mu_n$ are of the form~$\mu_n = \sum_{i=1}^n \lambda_i \delta_{x_i}$, where~$n$ and~$\lambda_1,\dots,\lambda_n \in [0,1]$ are prescribed, and~$x_1,\dots,x_n$ are free.}) have been proposed and studied, e.g.,~in~\cite{AurenhammerHoffmannAronov98,BergerHillMorrison08,deGoesBreedenOstromoukhovDesbrun12,Baker15,XinEtAl16,MerigotSantambrogioSarrazin21,BalzerSchlomerDeussen09,Cortes10,PatelFrascaBullo14}%
. Furthermore, this theory has been used as a tool, e.g., for the approximation of \emph{variational problems} and \emph{(stochastic) differential equations}~\cite{MerigotMirebeau16,Sarrazin22,BencheikhJourdain22bis,KuehnXu22,LiuPfeiffer23,GkogkasKuehnXu23}, to prove convergence rates for \emph{regularized optimal transport}~\cite{EcksteinNutz24}, to analyze \emph{restricted Monte Carlo methods} for quadrature~\cite{GilesHefterMayerRitter19}, to optimally \emph{place robots} in an environment~\cite{Cortes10,PatelFrascaBullo14,CortesEgerstedt17}, in \emph{computer graphics} (e.g.,~to generate \emph{blue-noise} distributions) \cite{BalzerSchlomerDeussen09,deGoesBreedenOstromoukhovDesbrun12,XinEtAl16}, in \emph{neuronal evolution} modeling~\cite{ChevallierDuarteLocherbachOst19}, and in \emph{material} modeling~\cite{DingEtAl23,ZhouEtAl24}.

Several versions of optimal empirical quantization with respect to different metrics/divergences/discrepancies (in place of~$W_p$) have also been studied. We mention~\cite{MakJoseph18,BergerXu20,XuKorbaSlepcev22}, as well as the series of works~\cite{Beck84,AistleitnerDick14,AistleitnerBilykNikolov18,FairchildGoeringWeiss21,Weiss23} on the generalized \emph{star-discrepancy}, which is used to bound numerical integration error by means of the generalized \emph{Koksma--Hlawka inequality}~\cite{AistleitnerDick15}.

Closely related to optimal empirical quantization is~\emph{random} empirical quantization, i.e.,~the problem of approximating a measure~$\mu$ using \emph{random} empirical measures~$\mu_n = \frac{1}{n} \sum_{i=1}^n \delta_{X_i}$, where~$(X_i)_{i \in \N}$ is a sequence of random variables (typically independent and identically distributed), see~\cite{WeedBach19,Ledoux23} and references therein. In recent times, some asymptotic results for this problem have been proven using the theory of partial differential equations~\cite{AmbrosioStraTrevisan19} %
and Fourier analysis~\cite{BobkovLedoux21}.

\subsection{Open questions} It may be interesting to further investigate the following problems.

\begin{enumerate}
	\item In \eqref{eq:mainbelow}, the domain of the integral is~$\R^d \setminus \supp (\mu^s)$. Is this just an artifact of our proof? That is: Can we replace this domain with the whole space~$\R^d$?
	
	\item We already stated \Cref{conj:q} on the equality~$q_{p,d}=\eqc{p}{d}$. Unclear is also the relation between~$q_{p,d},\eqc{p}{d}$ and the constants that appear in~\cite[Theorem~2]{DereichScheutzowSchottstedt13} and~\cite[Theorem~1.6]{CagliotiGoldmanPieroniTrevisan24} in the context of random empirical quantization. Numerical estimates of the constants may also help understand this relation.
	
	\item Depending on~$\mu$, several asymptotic behaviors are possible for the error~$\eqe{p}{n}(\mu)$ when~$p \ge d$, see~\cite[Table~1]{BencheikhJourdain22} as well as \Cref{cor:pGd2}, \Cref{ex:distant}, and \Cref{prop:uniformCube}. It may be worth determining precise characterizations of the measures that exhibit a certain error decay. For example, given~$p \ge d$, for which absolutely continuous and compactly supported measures~$\mu$ is the limit superior of~$ n^{1/d} \eqe{p}{n}(\mu)$ finite?
	
	\item What can we say about the error asymptotics for \emph{singular} measures?
	
	\item It would be natural to also study the problem on manifolds (as in~\cite{Kloeckner12,Iacobelli16,AydinIacobelli24} for classical quantization) and infinite-dimensional spaces.
\end{enumerate}

\subsection{Plan of the work}
The first four sections are preparatory. In \Cref{sec:heuristics}, we give a simple heuristic argument that justifies the integral~$\int \rho^\frac{d-p}{d} \, \dif \LebRd$ in \Cref{thm:main1}. In \Cref{sec:preliminaries}, we fix the notation and give all necessary definitions. In \Cref{sec:previous}, we present some of the existing results in the literature, both to provide context and because we will use some of them.

The subsequent sections contain proofs. The major ones will be preceded by comments on the core ideas and techniques. In \Cref{sec:nonasy}, we prove the nonasymptotic upper bound of \Cref{thm:nonasy}. In \Cref{sec:cube}, we begin the proof of \Cref{thm:main1} by proving the limit~\eqref{eq:limit} for the uniform measure on the unit cube. In \Cref{sec:corollaries}, we prove \Cref{cor:pGd2}. In \Cref{sec:main}, we complete the proof of \Cref{thm:main1}. In \Cref{sec:limitUnif}, we prove the limit~\eqref{eq:limit} for uniform measures in the regime~$p < d$. In \Cref{sec:limit}, we prove \Cref{thm:q2}.

Not all sections are necessary for the later arguments in this manuscript. The following scheme outlines the logical dependencies among the sections 5 to 10.

\begin{figure}[h]
\begin{tikzpicture}[node distance=2.1cm]
	\node (s5) [rec] {Sec.~\ref{sec:nonasy}};
	\node (s6) [rec, right of=s5] {Sec.~\ref{sec:cube}};
	\node (s7) [rec, right of=s6] {Sec.~\ref{sec:corollaries}};
	\node (s8) [rec, right of=s7] {Sec.~\ref{sec:main}};
	\node (s9) [rec, right of=s8] {Sec.~\ref{sec:limitUnif}};
	\node (s10) [rec, right of=s9] {Sec.~\ref{sec:limit}};
	\draw [arrow] (s5) to [out=25,in=150] (s8);
	\draw [arrow] (s6) to [out=30,in=160] (s8);
	\draw [arrow] (s6) to  (s7);
	\draw [arrow] (s8) to  (s9);
\end{tikzpicture}
\end{figure}

\subsection*{Note} Some of the results in this work have already appeared in the author's master's thesis \cite{Quattrocchi21}, written under the supervision of Dario~Trevisan at the University of Pisa. The lower bound \eqref{eq:mainbelow} in \Cref{thm:main1}, \Cref{cor:pGd2}, and \Cref{thm:nonasy} are new.

\section{Heuristics} \label{sec:heuristics}

Firstly, let us formally derive Zador's Theorem. A similar heuristic argument is given in~\cite{Dereich09}. Fix a ``nice'' probability measure~$\mu$, say absolutely continuous, compactly supported, and with continuous density~$\rho$. Let~$S_n = \set{x_1,\dots,x_n} \subseteq \R^d$ be the support of an optimal classical quantizer (i.e.,~a minimizer in~\eqref{eq:epn}) and let~$\sigma_n \LebRd$ be a ``nice'' approximation of the measure~$\frac{1}{n} \sum_{i=1}^n \delta_{x_i}$. For~$n$ large, the number of points in~$S_n$ that fall within a small ball~$B_\epsilon(\bar x)$ of radius~$\epsilon$ centered at~$\bar x \in \R^d$ is, approximately and up to a dimensional constant,~$ \epsilon^d n \sigma_n(\bar x)$. Since~$\rho$ is continuous and~$\epsilon$ is small, we can expect the points of~$S_n \cap B_\epsilon(\bar x)$ to be evenly spread on~$B_\epsilon(\bar x)$; therefore, the distance~$r$ of a generic point in such a ball from~$S_n$ is roughly equal to the $d^\text{th}$ root of the ratio between the volume of the ball and the cardinality~$\# (S_n \cap B_\epsilon(\bar x))$, i.e.,\footnote{In this work, the symbols~$\approx,\lessapprox,\gtrapprox$ do not have a rigorous meaning. They are used in heuristic arguments as shorthands for `is approximately equal to' and `is approximately smaller/greater than'.}~$r \approx \sqrt[d]{\frac{\epsilon^d}{\epsilon^d n \sigma_n(\bar x)}} = \bigl(n \sigma_n(\bar x )\bigr)^{-1/d}$. Hence,
\[
	\int \min_i \norm{x-x_i}^p \, \dif \mu \approx n^{-p/d} \int \sigma_n^{-p/d} \rho \, \dif \LebRd \fstop
\]
Thus, we can rephrase the problem in~\eqref{eq:epn} as a minimization over functions:
\[
	e_{p,n}^p(\mu) \approx n^{-p/d} \inf_\sigma \int \sigma^{-p/d} \rho \, \dif \LebRd \comma
\]
under the constraint~$\int \sigma \, \dif \LebRd = 1$.
By H\"older's inequality,
\begin{align*}
	\int \rho^\frac{d}{d+p} \, \dif \LebRd &\le \left( \int \left(\rho^\frac{d}{d+p} \sigma^{-\frac{p}{d+p}} \right)^\frac{d+p}{d} \, \dif \LebRd \right)^\frac{d}{d+p} \left( \int \left(\sigma^\frac{p}{d+p} \right)^\frac{d+p}{p} \, \dif \LebRd \right)^\frac{p}{d+p} \\
	&= \left( \int \sigma^{-p/d} \rho \, \dif \LebRd \right)^\frac{d}{d+p} \underbrace{\left(\int \sigma \, \dif \LebRd \right)^\frac{p}{d+p}}_{=1} \comma
\end{align*}
and the inequality is an equality for~$\sigma \coloneqq c \, \rho^\frac{d}{d+p}$, where~$c$ is a normalizing constant.

\medskip

In the case of optimal empirical quantization, we expect that the optimal locations~$S_n = \set{x_1,\dots,x_n}$ are, instead, approximately distributed according to~$\rho$: to keep the Wasserstein distance minimal, we should approximately match the mass in every small ball~$B_\epsilon(\bar x)$ to the points in (or closest to) such a ball, which means, in particular,
\[
	\epsilon^d \rho(\bar x) \approx \mu(B_\epsilon(\bar x)) \approx n^{-1} \# (S_n \cap B_\epsilon(\bar x)) \approx \epsilon^d \sigma_n(\bar x)\comma
\]
where, as before,~$\sigma_n$ is an approximation of the uniform measure on~$S_n$.\footnote{This explains why the same formula (up to constant) appears in random empirical quantization, see~\cite[Theorem~2]{DereichScheutzowSchottstedt13}.} Since, once again, the points~$S_n \cap B_\epsilon(\bar x)$ are evenly spread on~$B_\epsilon(\bar x)$, a generic point~$x \in B_\epsilon(\bar x)$ should be matched by an optimal transport plan to the \emph{closest}~$x_i \in S_n$. Recall that the typical distance from~$S_n$ is of order~$\bigl(n \sigma_n(\bar x))^{-1/d}$, which, combined with the considerations above, yields
\[
	\eqe{p}{n}^p(\mu) \approx n^{-p/d} \int \rho^{-p/d} \rho \, \dif \LebRd \fstop
\]

\medskip

We conclude this section with another simple observation. \emph{Postulate} that
\[
	\eqe{p}{n}(\mu) \approx n^{-a} \left( \int \rho^b \, \dif \LebRd \right)^c
\]
for some~$a,b,c \in \R$ and for every (sufficiently ``nice'') probability measure~$\mu = \rho \LebRd$. It is easy to check that~$\eqe{p}{n}(\lambda^{-d} \rho\bigl(\lambda^{-1} \cdot\bigr) \LebRd) = \lambda \eqe{p}{n}(\mu)$ for every~$\lambda > 0$. Therefore,
\[
	\lambda \left( \int \rho^b \, \dif \LebRd \right)^c = \left( \int_{\R^d} \bigl(\lambda^{-d}\rho(\lambda^{-1} \cdot ) \bigr)^b \, \dif \LebRd \right)^c = \lambda^{dc(1-b)} \left( \int \rho^b \, \dif \LebRd \right)^c \comma
\]
from which we obtain the identity~$1 = dc (1-b)$. Note that this is coherent with the statement of \Cref{thm:main1}.

\section{Preliminaries} \label{sec:preliminaries}

\subsection{Notation}

We regard~$\R^d$ as a measure space endowed with the~$\sigma$-algebra of \emph{Borel sets}~$\BorRd$, on which the \emph{Lebesgue measure}~$\LebRd$ is defined, and as a normed space with the \emph{Euclidean norm}~$\norm{\cdot}=\norm{\cdot}_2$. For every~$x \in \R^d$, we let~$\delta_x$ be the Dirac delta measure at~$x$. Given a Borel set~$A \in \BorRd$, we sometimes write~$\abs{A}$ in place of~$\LebRd(A)$. If~$|A| \neq 0,\infty$, it is well defined the \emph{uniform measure} \[ U_A \coloneqq \frac{\LebRd|_A}{\abs{A}}\] of~$A$. For convenience, we further define
\[
	U_d \coloneqq U_{[0,1]^d} \fstop
\]

For every set~$A \subseteq \R^d$, we denote by~$\diam(A)$ its diameter, i.e.,
\[
	\diam(A) \coloneqq \begin{cases}
		0 &\text{if } A = \emptyset \comma \\
		\sup_{x,y \in A}  \norm{x-y} &\text{otherwise,}
	\end{cases}
\]
and by~$\#A \in \N_0 \cup \set{\infty}$ its cardinality. We write~$\interior(A)$ and~$\overline{A}$ for its interior part and topological closure, respectively.

For every pair of sets~$A,B \subseteq \R^d$, we denote by~$\dist(A,B)$ their minimal distance
\[
	\dist(A,B) \coloneqq \begin{cases}
		\inf \set{\norm{x-y} \, : \, x \in A \comma y \in B} &\text{if } A,B \neq \emptyset \comma \\
		\infty &\text{otherwise,}
	\end{cases}
\]
and, similarly, we write~$\dist(x,A) \coloneqq \dist(\set{x},A)$.

We denote by~$\ProRd{}$ the space of Borel \emph{probability measures} on~$\R^d$ and by~$\MeasRd{}$ the space of Borel \emph{nonnegative finite measures} on $\R^d$, i.e.,~$\MeasRd{} \coloneqq \R_{\ge 0} \cdot \ProRd{}$. For every~$p \ge 1$, it is also convenient to introduce the space~$\ProRd{p}$ of probability measures with \emph{finite~$p^\mathrm{th}$ moment}
\[ \ProRd{p} \coloneqq \set{ \mu \in \ProRd{} \, \colon \, \int \norm{x}^p \, \dif \mu(x) < \infty }  \]
and the space~$\ProRd{{\mathrm{c}}}$ of \emph{compactly supported probability measures}
\[ \ProRd{{\mathrm{c}}} \coloneqq \set{ \mu \in \ProRd{} \, \colon \, \exists K \subseteq \R^d \text{ compact such that } \mu(K)=1 } \fstop \]
For~$n \in \N_1$, we further define the set
\[
	\ProRd{(n)} \coloneqq \set{\mu_n \in \ProRd{} \, : \, \exists x_1,x_2,\dotsc, x_n \in \R^d \comma  \mu_n = \frac{1}{n} \sum_{i=1}^n \delta_{x_i} } \fstop
\]
Analogously, we set
\[
	\MeasRd{p} \coloneqq \R_{\ge 0} \cdot \ProRd{p}\comma \, \MeasRd{{\mathrm{c}}} \coloneqq \R_{\ge 0} \cdot \ProRd{{\mathrm{c}}}\comma \, \MeasRd{(n)} \coloneqq \R_{\ge 0} \cdot \ProRd{(n)} \comma
\]
and~$\MeasRd{(0)} \coloneqq \set{0}$.

The (total variation) \emph{norm of a measure}~$\mu \in \MeasRd{}$ is~$\normTV{\mu} \coloneqq \mu(\R^d)$.
For every measurable function~$T \colon \R^{d_1} \to \R^{d_2}$, we denote by~$T_\# \colon \mathcal M(\R^{d_1}) \to \mathcal M(\R^{d_2})$ the \emph{pushforward} operator, defined by
\[
	T_\# \mu(A) \coloneqq \mu\bigl( T^{-1}(A) \bigr) \comma \qquad A \in \mathscr B(\R^{d_2}) \fstop
\]
Note that the norm is invariant under pushforward, i.e.,~$\normTV{T_\# \mu} = \normTV{\mu}$.
For~$\mu \in \MeasRd{}$, we write~$\supp(\mu)$ for the support of~$\mu$, i.e., the smallest closed set on which~$\mu$ is concentrated. For ease of notation, when~$\rho \in L^1_{\ge 0}(\R^d)$, we sometimes write~$\rho$ to denote the measure~$\rho \LebRd \in \MeasRd{}$.  %

We use the notation~$a \lesssim b$ when there exists a constant~$c>0$ for which~$a \le cb$. Given two sequences~$(a_n)_n$ and~$(b_n)_n$ of positive real numbers (defined for an unbounded set of natural indices), we write~$a_n \asymp b_n$ if
\[
	\frac{1}{c} \le  \liminf_{n \to \infty} \frac{a_n}{b_n} \le \limsup_{n \to \infty} \frac{a_n}{b_n} \le c
\]
for some constant~$c > 0$. Possible dependencies of the constant~$c$ are explicitly displayed as subscripts of the symbols~$\lesssim$ and~$\asymp$.

\subsection{Wasserstein distance}
Let~$p \ge 1$, and take two measures~$\mu, \nu \in \MeasRd{p}$ such that~$\normTV{\mu} = \normTV{\nu}$. We denote by~$\Gamma(\mu,\nu)$ the set of couplings between~$\mu$ and~$\nu$, i.e., the nonnegative Borel measures~$\gamma$ on~$\R^d \times \R^d$ that have~$\mu$ and~$\nu$ as marginals. The \emph{Wasserstein distance} of order~$p$ between~$\mu$ and~$\nu$ is given by the formula
\begin{equation} \label{eq:wasserstein}
	W_p(\mu,\nu) \coloneqq \inf_{\gamma \in \Gamma(\mu,\nu)} \left( \int \norm{x-y}^p \, \dif \gamma(x,y) \right)^{1/{p}} \fstop
\end{equation}
The function~$W_p$ is really a distance on~$\lambda \ProRd{p}$ for every~$\lambda \ge 0$ (the case~$\lambda = 0$ is trivial), as shown, for instance, in~\cite[Proposition 5.1]{Santambrogio15}, and we have~$W_p(\lambda \mu, \lambda \nu) = \lambda^{1/p} W_p(\mu,\nu)$ for every admissible choice of~$\mu,\nu,\lambda$. %
Moreover, by a simple compactness argument (see~\cite[Theorem 1.7]{Santambrogio15}), the infimum in~\eqref{eq:wasserstein} is actually a minimum.

The following two nice features of~$W_p$ will be used in this work. The first one is a subadditivity property.

\begin{lemma}[Subadditivity] \label{lemma:subadditivityWasserstein}
	Let~$\mu^1,\mu^2,\nu^1,\nu^2 \in \MeasRd{p}$ be such that~$\normTV{\mu^1} = \normTV{\nu^1}$ and~$\normTV{\mu^2} = \normTV{\nu^2}$. Then we have
	\begin{equation} \label{eq:subadditivityWasserstein}
		W_p^{p}\bigl(\mu^1 + \mu^2, \nu^1 + \nu^2\bigr) \le W_p^{p}(\mu^1,\nu^1) + W_p^{p}(\mu^2,\nu^2) \fstop
	\end{equation}
\end{lemma}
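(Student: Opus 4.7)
The plan is to exploit the definition of $W_p$ as an infimum over couplings by constructing an explicit, generally suboptimal, coupling between $\mu^1+\mu^2$ and $\nu^1+\nu^2$ out of the optimal couplings for each pair.

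First, I would invoke the existence of optimal couplings stated immediately before the lemma: pick $\gamma^1 \in \Gamma(\mu^1,\nu^1)$ and $\gamma^2 \in \Gamma(\mu^2,\nu^2)$ attaining the minima, so that
\begin{equation*}
	\int \norm{x-y}^p \, \dif \gamma^i(x,y) = W_p^p(\mu^i,\nu^i) \comma \qquad i=1,2 \fstop
\end{equation*}
These exist because the marginals have matching total masses $\normTV{\mu^i}=\normTV{\nu^i}$ and lie in $\MeasRd{p}$.

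Next, I would set $\gamma \coloneqq \gamma^1 + \gamma^2$ as a nonnegative finite Borel measure on $\R^d \times \R^d$, and verify that $\gamma \in \Gamma(\mu^1+\mu^2,\nu^1+\nu^2)$. This amounts to checking the two marginal conditions: if $\pi_1,\pi_2 \colon \R^d \times \R^d \to \R^d$ denote the coordinate projections, then $(\pi_1)_\# \gamma = (\pi_1)_\# \gamma^1 + (\pi_1)_\# \gamma^2 = \mu^1 + \mu^2$, and similarly for $\pi_2$, which follows from the linearity of the pushforward. In particular, $\gamma$ has the correct total mass $\normTV{\mu^1}+\normTV{\mu^2} = \normTV{\nu^1}+\normTV{\nu^2}$.

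Finally, using $\gamma$ as a (generally nonoptimal) competitor in the definition~\eqref{eq:wasserstein} of $W_p^p(\mu^1+\mu^2,\nu^1+\nu^2)$, I obtain
\begin{equation*}
	W_p^p(\mu^1+\mu^2,\nu^1+\nu^2) \le \int \norm{x-y}^p \, \dif \gamma(x,y) = W_p^p(\mu^1,\nu^1) + W_p^p(\mu^2,\nu^2) \comma
\end{equation*}
which is the desired inequality. There is no real obstacle in this argument: the proof is essentially a one-line observation once optimal couplings are summed, and the only mild point to verify is that the sum of couplings is itself a coupling of the summed marginals, which is immediate from the additivity of the pushforward.
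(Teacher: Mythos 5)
Your proof is correct and follows essentially the same approach as the paper's: sum the two optimal couplings, check that the sum is a coupling of the summed marginals, and use it as a competitor in the infimum defining $W_p$. The paper just states this more tersely, leaving the marginal check and the use of optimal couplings implicit.
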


\begin{proof}
	This result follows from the implication
	\[
		\gamma^i \in \Gamma(\mu^i,\nu^i) \comma i \in \set{1,2} \quad \Longrightarrow \quad \gamma^1 + \gamma^2 \in \Gamma\bigl(\mu^1 + \mu^2, \nu^1 +  \nu^2 \bigr)
	\]
	and the linearity of
	\[
		\gamma \mapsto \int \norm{x-y}^p \, \dif \gamma(x,y) \fstop \qedhere
	\]
\end{proof}

The second one is: on a fixed compact set, the Wasserstein distance of two a.c.~measures can be controlled by the~$L^1$-distance of their densities.

\begin{lemma}[Comparison with~$\norm{\cdot}_{L^1}$] \label{lemma:comparL1}
	Let~$\mu = \rho \LebRd,\nu = \sigma \LebRd$ be compactly supported and absolutely continuous measures, with~$\normTV{\mu} = \normTV{\nu}$. Let~$A \subseteq \R^d$ be a bounded set on which both~$\mu$ and~$\nu$ are concentrated. Then:
	\begin{equation}
		W_p(\mu,\nu) \le \diam(A) \norm{\rho-\sigma}_{L^1}^{1/p} \fstop
	\end{equation}
\end{lemma}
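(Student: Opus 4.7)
The plan is to exploit the positive/negative decomposition of $\rho-\sigma$ together with the subadditivity of $W_p^p$ established in \Cref{lemma:subadditivityWasserstein}, and then bound the remaining transport cost trivially by the diameter of $A$.

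First, I would write the common mass out of the picture. Set $\tau \coloneqq \rho \wedge \sigma$ (the pointwise minimum) and observe that
\[
\rho = \tau + (\rho-\sigma)_+ \comma \qquad \sigma = \tau + (\sigma-\rho)_+ \fstop
\]
Since $\int \rho \, \dif \LebRd = \int \sigma \, \dif \LebRd$, one has $\int (\rho-\sigma)_+ \, \dif \LebRd = \int (\sigma-\rho)_+ \, \dif \LebRd$, so the two ``excess'' densities define nonnegative measures of equal total mass $\frac{1}{2}\|\rho-\sigma\|_{L^1}$. Both are concentrated on $A$ (as they are dominated by $\rho$ and $\sigma$, respectively).

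Next, I would apply \Cref{lemma:subadditivityWasserstein} with $\mu^1=\nu^1=\tau\LebRd$, $\mu^2=(\rho-\sigma)_+\LebRd$, $\nu^2=(\sigma-\rho)_+\LebRd$. Since $W_p(\tau\LebRd,\tau\LebRd)=0$ (the identity coupling has zero cost), this yields
\[
W_p^p(\mu,\nu) \le W_p^p\bigl((\rho-\sigma)_+\LebRd,\,(\sigma-\rho)_+\LebRd\bigr) \fstop
\]

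For the right-hand side, I would pick any coupling $\gamma$ of the two excess measures (existence is guaranteed by their equal total mass, e.g.,\ the product coupling normalized appropriately). Because both marginals are concentrated on $A$, the coupling $\gamma$ is concentrated on $A \times A$, so $\norm{x-y}\le \diam(A)$ $\gamma$-a.e. Therefore
\[
\int \norm{x-y}^p \, \dif \gamma(x,y) \le \diam(A)^p \cdot \normTV{\gamma} = \diam(A)^p \cdot \tfrac{1}{2} \norm{\rho-\sigma}_{L^1} \le \diam(A)^p \norm{\rho-\sigma}_{L^1} \fstop
\]
Taking $p^\text{th}$ roots gives the claimed inequality.

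There is no real obstacle here: the only subtlety is remembering that $\|(\rho-\sigma)_+\|_{L^1} = \tfrac{1}{2}\|\rho-\sigma\|_{L^1}$, which actually yields a marginally stronger bound with constant $2^{-1/p}\diam(A)$, and the stated inequality follows a fortiori. The argument uses only the compact support assumption (through $\diam(A)$) and absolute continuity is used implicitly in writing the decomposition via densities.
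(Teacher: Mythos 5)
Your proof is correct and follows essentially the same route as the paper's: the same decomposition into common mass $\rho\wedge\sigma$ plus the two excess densities, the same use of \Cref{lemma:subadditivityWasserstein}, and the same trivial coupling bounded by $\diam(A)$. The only (minor) difference is that you keep the exact mass $\tfrac12\|\rho-\sigma\|_{L^1}$ of the excess measures, getting the slightly sharper constant $2^{-1/p}\diam(A)$, whereas the paper simply bounds $\normTV{\mu^2}\le\|\rho-\sigma\|_{L^1}$.
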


\begin{proof}
	We can and will assume that~$\mu \neq \nu$. Set
	\[
		\mu^1 = \nu^1 \coloneqq \min(\rho,\sigma) \LebRd \comma \quad \mu^2 = \max(\rho-\sigma,0)\LebRd \comma \qquad \nu^2 = \max(\sigma-\rho,0)\LebRd \comma
	\]
	and notice that neither~$\mu^2$ nor~$\nu^2$ is equal to the zero measure. The hypotheses of \Cref{lemma:subadditivityWasserstein} are satisfied. Hence,
	\[
		W_p^{p}(\mu,\nu) \le \underbrace{W_p^{p}(\mu^1,\nu^1)}_{=0} + W_p^{p}(\mu^2,\nu^2) \fstop
	\]
	Therefore, it suffices to find a suitable coupling between~$\mu^2$ and~$\nu^2$. We choose
	\[
		\gamma \coloneqq \frac{\mu^2 \otimes \nu^2}{\normTV{\mu^2}} = \frac{\mu^2 \otimes \nu^2}{\normTV{\nu^2}} \comma
	\]
	which yields
	\begin{align*}
		\int \norm{x-y}^p \dif \gamma(x,y) &\le \int \norm{x-y}^p \frac{\mu^2 \otimes \nu^2}{\normTV{\nu^2}}(x,y) \, \dif x \, \dif y \\
		&\le \diam(A)^p \frac{\normTV{\mu^2} \normTV{\nu^2}}{\normTV{\nu^2}} = \diam(A)^p \normTV{\mu^2} \fstop
	\end{align*}
	We conclude by the inequality~$\normTV{\mu^2} \le \norm{\rho-\sigma}_{L^1}$.
\end{proof}

\subsection{Boundary Wasserstein pseudodistance}
A.~Figalli and N.~Gigli introduced in~\cite{FigalliGigli10} a modified Wasserstein distance~$Wb$ for measures defined on a bounded Euclidean domain, giving a special role to the boundary of such a domain: it can be interpreted as an infinite reservoir, where mass can be deposited and taken freely. We give here a slightly modified definition of a \emph{pseudodistance} between measures \emph{defined on the whole~$\R^d$}.

Let~$p \ge 1$ and fix an open bounded nonempty set~$\Omega \subseteq \R^d$. Take two measures~$\mu, \nu \in \MeasRd{}$, possibly having different total mass. Let~$\Gamma b_\Omega(\mu,\nu)$ be the set of the nonnegative Borel measures~$\gamma$ on the closure~$\overline \Omega \times \overline \Omega$ such that~$\gamma|_{\Omega \times \overline \Omega}$ has~$\mu|_\Omega$ as first marginal, and~$\gamma|_{\overline \Omega \times \Omega}$ has~$\nu|_\Omega$ as second marginal.

\begin{definition}
	The \emph{boundary Wasserstein pseudodistance} of order~$p$ for~$\Omega$ between~$\mu$ and~$\nu$ is given by the formula
	\begin{equation}
		Wb_{\Omega, p}(\mu, \nu) \coloneqq \inf_{\gamma \in \Gamma b_\Omega(\mu,\nu)} \left( \int \norm{x-y}^p \, \dif \gamma(x,y) \right)^{1/p} \fstop
	\end{equation}
\end{definition}

It is easy to check that~$Wb_{\Omega,p}(\mu,\nu)$ is nonnegative and finite for every~$\mu,\nu$, that the symmetry property~$Wb_{\Omega,p}(\mu,\nu) = Wb_{\Omega,p}(\nu,\mu)$ holds, and that~$Wb_{\Omega,p}(\mu,\mu) = 0$. The triangle inequality can be proven as in \cite[Theorem 2.2]{FigalliGigli10} (or directly deduced from this theorem). Clearly, with our definition,~$Wb_{\Omega,p}$ cannot be a true distance, as it does not distinguish measures that differ out of~$\Omega$: for every~$\mu,\nu \in \MeasRd{}$, we have~$Wb_{\Omega,p}(\mu,\nu) = Wb_{\Omega,p}(\mu|_\Omega,\nu|_\Omega)$. As with~$W_p$, we have the identity~$Wb_{\Omega,p}(\lambda \mu, \lambda \nu) = \lambda^{1/p} W_{\Omega,p}(\mu,\nu)$ for~$\lambda \ge 0$. Further notice that~$Wb_{\Omega,p}(\mu,\nu) \le W_p(\mu,\nu)$ when~$\mu, \nu \in \MeasRd{p}$ and~$\normTV{\mu} = \normTV{\nu}$, for any~$\Omega$.

A crucial property of~$Wb_{\Omega,p}$ is its geometric superadditivity, which will be used in the proof of the lower bound~\eqref{eq:mainbelow}.

\begin{lemma}[Superadditivity] \label{lemma:superadditivityWb}
	If~$\set{\Omega_i}_i$ is a (finite or countably infinite) family of open, bounded, nonempty, and pairwise \emph{disjoint} subsets of~$\Omega$, then
	\begin{equation}
		Wb_{\Omega,p}^p(\mu, \nu) \ge \sum_{i} Wb_{\Omega_i,p}^p(\mu, \nu) \comma \qquad \mu, \nu \in \MeasRd{} \fstop
	\end{equation}
\end{lemma}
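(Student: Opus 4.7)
My plan is to show that from any coupling $\gamma \in \Gamma b_\Omega(\mu,\nu)$ one can extract admissible couplings $\gamma_i \in \Gamma b_{\Omega_i}(\mu,\nu)$ satisfying the pointwise-cost decrease
\[
	\int \norm{x-y}^p\,\dif\gamma \ge \sum_i \int \norm{x-y}^p\,\dif\gamma_i\comma
\]
after which the inequality in the statement follows by taking the infimum over $\gamma$ on the left.

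Each $\gamma_i$ will be assembled as a sum of a restriction and two pushforwards of $\gamma$, localizing every transport segment to $\overline{\Omega_i}$. Pairs $(x,y)$ with $x \in \Omega_i$ and $y \in \overline{\Omega_i}$ (or the symmetric case) will be kept intact. When $x \in \Omega_i$ but $y \notin \overline{\Omega_i}$, I will truncate the segment at the first exit time $t_1 \coloneqq \inf\{t \in [0,1] : x+t(y-x) \notin \overline{\Omega_i}\} \in (0,1)$, sending this pair to $(x, x+t_1(y-x)) \in \Omega_i \times \partial\Omega_i$; the symmetric case $y \in \Omega_i$, $x \notin \overline{\Omega_i}$ will be handled with a parameter $s \in (0,1)$ chosen below. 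Because every truncation endpoint lies in $\partial\Omega_i$, outside the open $\Omega_i$, it will not affect the marginals on $\Omega_i$. Combined with the pairwise disjointness of the $\Omega_i$ (which also gives $\overline{\Omega_i} \cap \Omega_j = \emptyset$ for $i \ne j$, hence no cross-contamination), one then verifies that $\gamma_i|_{\Omega_i \times \overline{\Omega_i}}$ and $\gamma_i|_{\overline{\Omega_i} \times \Omega_i}$ carry marginals $\mu|_{\Omega_i}$ and $\nu|_{\Omega_i}$, respectively. Borel measurability of the cutoff $t_1$ is routine, as $\{(x,y,t): x+t(y-x) \in \overline{\Omega_i}\}$ is closed.

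The delicate case---and the main obstacle---is when $x \in \Omega_i$ and $y \in \Omega_j$ with $i \ne j$, so the same pair feeds both $\gamma_i$ (cost $t_1^p \norm{x-y}^p$) and $\gamma_j$ (cost depending on $s$). The natural choice $s = t_2 \coloneqq \sup\{t \in [0,1]: x+t(y-x) \notin \overline{\Omega_j}\}$ is safe when $t_1 \le t_2$ but overcharges when $t_1 > t_2$, which occurs precisely when the segment lingers on the common boundary piece $\overline{\Omega_i} \cap \overline{\Omega_j} \subseteq \partial\Omega_i \cap \partial\Omega_j$. I plan to fix this by setting $s \coloneqq \max(t_1, t_2)$: the resulting cut point $x+s(y-x)$ still lies in $\partial\Omega_j$ thanks to $\Omega_j \cap \overline{\Omega_i} = \emptyset$, and the total cost charged to $(x,y)$ becomes $\bigl[t_1^p + (1-\max(t_1,t_2))^p\bigr]\norm{x-y}^p$. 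Since $t_1 + (1-\max(t_1,t_2)) \le 1$ in both subcases, and $a^p + b^p \le a + b \le 1$ for $a, b \in [0,1]$ with $a + b \le 1$ and $p \ge 1$, this is bounded by $\norm{x-y}^p$ as required. Integrating and passing to the infimum over $\gamma$ then concludes.
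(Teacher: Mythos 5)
Your proof is correct, and it follows the standard localize-and-truncate scheme that the paper does not reproduce but instead defers to~\cite[\S 2.2]{AmbrosioGoldmanTrevisan22}. You correctly identified and handled the one genuinely delicate point---a pair $(x,y)$ with $x\in\Omega_i$, $y\in\Omega_j$ whose segment lingers on $\overline{\Omega_i}\cap\overline{\Omega_j}$---via the choice $s=\max(t_1,t_2)$, and the verification that the resulting truncation point still lies in $\partial\Omega_j$ (because $\Omega_j\cap\overline{\Omega_i}=\emptyset$ and $t>t_2$ implies the point is in $\overline{\Omega_j}$) is exactly right. One small simplification worth noting: the need for the $\max$ disappears entirely if you instead truncate at the first exit time from the \emph{open} set $\Omega_i$ rather than from its closure, i.e.\ $\tau_i:=\sup\{t:\,x+r(y-x)\in\Omega_i\text{ for all }r\in[0,t)\}$, and symmetrically $\tau_j:=\inf\{t:\,x+r(y-x)\in\Omega_j\text{ for all }r\in(t,1]\}$. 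The points $x+\tau_i(y-x)$ and $x+\tau_j(y-x)$ still lie on $\partial\Omega_i$ and $\partial\Omega_j$ respectively, and now the intervals $[0,\tau_i)$ and $(\tau_j,1]$ are disjoint for free (their $\sigma$-images lie in $\Omega_i$ and $\Omega_j$, which are disjoint), so $\tau_i\le\tau_j$ holds automatically and the overcharging subcase never arises. Either variant is sound; yours is slightly less economical but fully correct, and the argument passes to the countably infinite case without change since the pointwise cost bound is independent of the number of subdomains.
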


\begin{proof}
The proof of this lemma can be found in \cite[Section 2.2]{AmbrosioGoldmanTrevisan22}.
\end{proof} 

\subsection{Quantization errors and coefficients}

\begin{definition} \label{def:eqe}
	The $n^\text{th}$ \emph{optimal quantization error} of order~$p$ is
	\begin{multline} \label{eq:defe}
		e_{p,n}(\mu) \coloneqq \inf \set{ W_p(\mu,\mu_n) \, : \, \#\supp(\mu_n) \le n \text{  and } \normTV{\mu} = \normTV{\mu_n} } \comma \\ \mu \in \MeasRd{p}\comma
	\end{multline}
	and the \emph{optimal quantization coefficient} of order~$p$ is
	\begin{equation}
		q_{p,d} \coloneqq \inf_{n \in \N_1} n^{1/d} e_{p,n}(U_d) \fstop
	\end{equation}
\end{definition}
\begin{definition}
	The $n^\text{th}$ \emph{optimal empirical quantization error} of order~$p$ is
	\begin{multline}\label{eq:defetilde}
		\eqe{p}{n}(\mu) \coloneqq \inf \set{ W_p(\mu, \mu_n) \, : \, \mu_n \in \MeasRd{(n)} \comma \normTV{\mu_n} = \normTV{\mu} } \comma \\ \mu \in \MeasRd{p} \comma
	\end{multline}
	and the \emph{optimal empirical quantization coefficient} of order~$p$ is
	\begin{equation}
		\eqc{p}{d} \coloneqq \inf_{n \in \N_1} n^{1/d} \eqe{p}{n}(U_d) \fstop
	\end{equation}
\end{definition}
We leave~$e_{p,0}(\mu)$ and~$\eqe{p}{0}(\mu)$ undefined when~$\mu \neq 0$.

In words, the optimal quantization error measures the minimal distance to atomic measures supported on at most~$n$ points (with the same total mass); the optimal empirical quantization error measures the minimal distance to (appropriately rescaled) sums of $n$ Dirac deltas.

\begin{remark} \label{rmk:scaling}
	For every~$\mu \in \MeasRd{p}$, the following inequality hold:
	\begin{equation} \label{eq:disErrors}
		e_{p,n}(\mu) \le \eqe{p}{n}(\mu) \fstop
	\end{equation}
	Both errors are~$\frac{1}{p}$-homogeneous and~$e_{p,n}(0) = \eqe{p}{n}(0) = 0$ for every~$n$, including~$n=0$. Moreover, if~$T \colon \R^d \to \R^d$ is an affine transformation of the form~$T(x) = v + \lambda x$, with~$v \in \R^d$ and~$\lambda \in \R$ then
	\begin{equation} \label{eq:scalingDilation}
		e_{p,n}(T_\# \mu) = \abs{\lambda} e_{p,n}(\mu) \comma \quad \eqe{p}{n}(T_\# \mu) = \abs{\lambda} \eqe{p}{n}(\mu) \fstop
	\end{equation}
\end{remark}

\begin{remark}
	From~\eqref{eq:disErrors}, we deduce also~$q_{p,d} \le \eqc{p}{d}$. Moreover, the quantization coefficients are strictly positive, see \Cref{thm:BWZ}.
\end{remark}

\begin{remark} \label{rmk:subaddErr}
	Let~$\mu^1,\mu^2 \in \MeasRd{p}$ and~$n_1,n_2 \in \N_0$ be such that
	\[
		\bigl[ n_i = 0 \Rightarrow \mu^i = 0 \bigr] \comma \qquad i \in \set{ 1,2} \fstop
	\]
	Then it follows from \Cref{lemma:subadditivityWasserstein} that
	\begin{equation}
	e^p_{p,n_1+n_2}(\mu^1+\mu^2) \le e^p_{p,n_1}(\mu^1) + e^p_{p,n_2}(\mu^2) \fstop
	\end{equation}
	If, moreover,~$
		\normTV{\mu^1} n_2 = \normTV{\mu^2} n_1
	$,
	then
	\begin{equation}
		\eqe{p}{n_1+n_2}^p(\mu^1+\mu^2) \le \eqe{p}{n_1}^p(\mu^1) + \eqe{p}{n_2}^p(\mu^2) \fstop
	\end{equation}
\end{remark}

The infima in~\eqref{eq:defe} and~\eqref{eq:defetilde} are, in fact, minima. For~$e_{p,n}(\mu)$, the proof can be found in~\cite[Theorem~4.12]{GrafLuschgy00} (which, in turn, follows the lines of~\cite[Lemma~8]{Pollard82}) or in~\cite[Appendix~A.4]{Quattrocchi21}. Let us prove the existence of the minimum in~\eqref{eq:defetilde}.

\begin{lemma}
	\label{lemma:existence}
	Let~$\mu \in \MeasRd{p}$. For every~$n \in \N_1$ there exists a measure~$\mu_n \in \MeasRd{(n)}$ with~$\normTV{\mu_n} = \normTV{\mu}$ and such that~$\eqe{p}{n}(\mu) = W_p(\mu,\mu_n)$.
\end{lemma}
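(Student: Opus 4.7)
The plan is to run the standard direct method of the calculus of variations on a minimizing sequence of configurations. Let $m \coloneqq \normTV{\mu}$ and choose $\mu_n^k = \frac{m}{n} \sum_{i=1}^n \delta_{x_i^k} \in \MeasRd{(n)}$ with $\normTV{\mu_n^k} = m$ and $W_p(\mu,\mu_n^k) \to \eqe{p}{n}(\mu)$. That the infimum is finite is clear by taking the competitor $m \delta_0 = \frac{m}{n} \sum_{i=1}^n \delta_0$, which gives $W_p(\mu, m\delta_0)^p = \int \norm{x}^p \, \dif \mu < \infty$ since $\mu \in \MeasRd{p}$.

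The crucial step will be to show that the points $\set{x_i^k}_{i,k}$ remain in a bounded subset of $\R^d$, and this is the only nontrivial part. I would argue by contradiction: suppose, after extraction, $\norm{x_{i_0}^k} \to \infty$ for some index $i_0$. Fix $R$ large enough that $\mu(\R^d \setminus B_R) < \frac{m}{2n}$, and let $\gamma^k \in \Gamma(\mu,\mu_n^k)$ be an optimal plan. Since the second marginal of $\gamma^k$ has mass exactly $m/n$ at $x_{i_0}^k$, at least mass $\frac{m}{n} - \frac{m}{2n} = \frac{m}{2n}$ is transported to $x_{i_0}^k$ from inside $B_R$, and such mass must travel at least $\norm{x_{i_0}^k} - R \ge \norm{x_{i_0}^k}/2$ for $k$ large. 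Hence
\[
W_p^p(\mu,\mu_n^k) \ge \frac{m}{2n}\left(\frac{\norm{x_{i_0}^k}}{2}\right)^{p} \to \infty,
\]
contradicting the fact that $(W_p(\mu,\mu_n^k))_k$ is bounded.

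Once boundedness is established, I extract a subsequence (still indexed by $k$) along which $x_i^k \to x_i^\infty$ in $\R^d$ for every $i = 1,\dots,n$. Define $\mu_n \coloneqq \frac{m}{n} \sum_{i=1}^n \delta_{x_i^\infty} \in \MeasRd{(n)}$, which clearly satisfies $\normTV{\mu_n} = m$. Since all supports $\supp(\mu_n^k)$ lie in a common compact set and $\mu_n^k \to \mu_n$ weakly with uniformly bounded $p$-th moments, one has $W_p(\mu_n^k, \mu_n) \to 0$; for instance, this follows immediately from the explicit coupling $\frac{1}{n}\sum_{i=1}^n \delta_{(x_i^k, x_i^\infty)}$, whose cost tends to zero by continuity. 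By the triangle inequality for $W_p$,
\[
W_p(\mu,\mu_n) \le W_p(\mu,\mu_n^k) + W_p(\mu_n^k,\mu_n) \xrightarrow{k \to \infty} \eqe{p}{n}(\mu),
\]
and the reverse inequality is the definition of infimum, so $W_p(\mu,\mu_n) = \eqe{p}{n}(\mu)$ and $\mu_n$ is the desired minimizer.
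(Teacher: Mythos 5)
Your proposal is correct and follows essentially the same route as the paper: a direct-method argument showing that near-optimal atom configurations stay in a compact set and that the cost is continuous in the atom positions (your explicit coupling $\frac{m}{n}\sum_i \delta_{(x_i^k,x_i^\infty)}$ — note the weight should be $\frac{m}{n}$, not $\frac{1}{n}$ — is exactly the estimate the paper uses to prove continuity of the map $(x_1,\dots,x_n)\mapsto W_p\bigl(\mu,\frac{m}{n}\sum_i\delta_{x_i}\bigr)$). The only difference is the coercivity step, where the paper bounds the sublevels in one line via the reverse triangle inequality against $\delta_0$, while you obtain the same boundedness by a mass-localization contradiction; both are valid.
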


\begin{proof}
	If~$\mu = 0$, then~$\mu_n \coloneqq 0 \in \MeasRd{(n)}$ is the sought measure. Otherwise, we may renormalize and assume that~$\normTV{\mu} = 1$, we have to prove that the function
	\[
		\psi \colon \R^{nd} \ni (x_1,\dots,x_n) \longmapsto W_p\left( \mu, \frac{1}{n} \sum_{i=1}^n \delta_{x_i} \right)
	\]
	admits a minimizer.
	This function is continuous: by the triangle inequality and \Cref{lemma:subadditivityWasserstein},
	\begin{align*}
		\abs{\psi(x_1,\dots,x_n)-\psi(y_1,\dots,y_n)}^p %
		&\le W_p^p \left(\frac{1}{n} \sum_{i=1}^n \delta_{x_i},\frac{1}{n} \sum_{i=1}^n \delta_{y_i}\right) \\ &\le \frac{1}{n} \sum_{i=1}^n W_p^p(\delta_{x_i},\delta_{y_i}) = \frac{1}{n} \sum_{i=1}^n \norm{x_i-y_i}^p 
	\end{align*}
	for every~$x_1,\dots,x_n,y_1,\dots,y_n \in \R^d$. Again by the triangle inequality,
	\begin{align*}
		\psi(x_1,\dots,x_n) %
		&\ge W_p\left( \delta_0, \frac{1}{n} \sum_{i=1}^n \delta_{x_i} \right) - W_p( \mu, \delta_0 ) = \frac{1}{n} \sum_{i=1}^n \norm{x_i}^p - \underbrace{\int \norm{x}^p \, \dif \mu}_{< \infty} \comma
	\end{align*}
	which implies that the sublevels of~$\psi$ are bounded.
	We conclude by applying the extreme value theorem on a sufficiently large compact set.
\end{proof}

Let us show that the sequence~$\bigl(\eqe{p}{n}(\mu)\bigr)_n$ is infinitesimal as~$n \to \infty$ \emph{for every}~$\mu$ with finite~$p^\text{th}$ moment. In dimension~$d=1$, this was established in~\cite[Corollary~5.12]{XuBerger19}. The analogous result for~$e_{p,n}(\mu)$ follows as a corollary, but was also proven, e.g.,~in~\cite[Lemma~6.1]{GrafLuschgy00}.

\begin{proposition} \label{prop:infinitesimal}
	For every~$\mu \in \MeasRd{p}$, we have
	\begin{equation}
		\lim_{n \to \infty} \eqe{p}{n}(\mu) = 0 \fstop
	\end{equation}
\end{proposition}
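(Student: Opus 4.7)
The plan is to first establish the result when $\mu$ is compactly supported, and then reduce the general case to this one by truncation, using the subadditivity of \Cref{rmk:subaddErr}. We may assume $\normTV{\mu}=1$; the case $\mu = 0$ is trivial.

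\emph{Compactly supported case.} Suppose $\supp \mu \subseteq Q$ for some bounded Borel set $Q$ of diameter $D$. Given $\delta > 0$, partition $Q$ into pairwise disjoint Borel cells $Q_1, \dotsc, Q_L$, each of diameter at most $\delta$, pick $z_i \in Q_i$, and set $\alpha_i \coloneqq \mu(Q_i)$. The atomic measure $\tilde \mu \coloneqq \sum_i \alpha_i \delta_{z_i}$ satisfies $W_p(\mu, \tilde \mu) \le \delta$, via the coupling sending each $Q_i$ onto $z_i$. For each $n$, choose nonnegative integers $k_i$ summing to $n$ with $\abs{k_i/n - \alpha_i} \le 1/n$ (largest-remainder rounding), and set $\mu_n \coloneqq \frac{1}{n} \sum_i k_i \delta_{z_i} \in \ProRd{(n)}$. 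Transporting the discrepancy between $\tilde \mu$ and $\mu_n$ within $Q$ yields $W_p^p(\tilde \mu, \mu_n) \le D^p L/n$, so $\eqe{p}{n}(\mu) \le \delta + D (L/n)^{1/p}$. Sending $n \to \infty$ and then $\delta \to 0$ completes this case.

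\emph{General case.} Choose $R > 0$ large enough that $\mu(B_R(0)) > 0$, and split $\mu = \mu_R + \mu_R'$ with $\mu_R \coloneqq \mu|_{B_R(0)}$. The finite $p$-th moment hypothesis ensures $\int \norm{x}^p \, \dif \mu_R' \to 0$ as $R \to \infty$. For each sufficiently large $n$, let $n_2 \coloneqq \lceil n \normTV{\mu_R'} \rceil$ and $n_1 \coloneqq n - n_2$, and write $\mu = \mu^1 + \mu^2$ with $\mu^1 \le \mu_R$ of mass $n_1/n$ and $\mu^2 \coloneqq \mu - \mu^1 \ge \mu_R'$ of mass $n_2/n$. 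The condition $\normTV{\mu^1}\, n_2 = \normTV{\mu^2}\, n_1$ is satisfied, so \Cref{rmk:subaddErr} gives
\[
\eqe{p}{n}^p(\mu) \le \eqe{p}{n_1}^p(\mu^1) + \eqe{p}{n_2}^p(\mu^2) \fstop
\]
Placing all atoms at the origin bounds the second summand by $\int \norm{x}^p \, \dif \mu^2 \le \int \norm{x}^p \, \dif \mu_R' + R^p/n$. For the first summand, the compactly supported argument applies with a key refinement: the partition $\{Q_i\}$ and the locations $z_i$ are fixed a priori inside $B_R$ (depending only on $R$ and $\delta$), so only the weights $\alpha_i = \mu^1(Q_i)$ depend on $n$. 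This yields the uniform bound $\eqe{p}{n_1}(\mu^1) \le \delta + 2R\,(L/n_1)^{1/p}$, and $n_1 \to \infty$. Letting first $n \to \infty$, then $\delta \to 0$, then $R \to \infty$ finishes the proof.

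The principal obstacle is aligning the integer split $n = n_1 + n_2$ with the mass split $\normTV{\mu^1} + \normTV{\mu^2} = 1$ demanded by \Cref{rmk:subaddErr}. To enforce it, one shifts $O(1/n)$ of mass from $\mu_R$ into $\mu^2$, which both produces the $R^p/n$ error in the moment bound and causes $\mu^1$ to depend on $n$. This dependence is precisely why we must base the estimate on $\eqe{p}{n_1}(\mu^1)$ on a partition of $B_R$ chosen once and for all, rather than on the measure-dependent partition used in the strictly compactly supported step.
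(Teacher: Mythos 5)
Your proof is correct, but the route is genuinely different from the paper's. The paper handles the compactly supported part by invoking the Mérigot--Mirebeau/Chevallier bound (\Cref{thm:MerigotMirebeauChevallier}) as a black box (via an infinitesimal function $f_{p,d}$ satisfying $\eqe{p}{n}(\nu)\le r\,f_{p,d}(n)$ for $\nu$ supported in $[-r,r]^d$), whereas you supply a self-contained elementary argument: partition the support into $L$ cells of diameter $\le \delta$, collapse each to a point to get $W_p(\mu,\tilde\mu)\le\delta$, round the weights to multiples of $1/n$ via largest-remainder, and absorb the $O(L/n)$ total rounding discrepancy at cost $O(D^p L/n)$. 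This has the structural advantage of avoiding a forward reference (in the paper, \Cref{prop:infinitesimal} appears in \Cref{sec:preliminaries} but its proof cites a theorem stated only in \Cref{sec:previous}), and it makes the rate in the compact case explicit without relying on an external result. For the tail, the paper works with two nested radii $r<R$ (choosing $R$ via Markov so that $\mu(B_R)$ exceeds $\mu(B_r)+1/n$, which forces $\mu^2$ to be concentrated outside $B_r$ so the tail moment is exactly $\int_{\R^d\setminus B_r}\norm x^p\dif\mu$); you instead use a single radius $R$, letting $\mu^2$ pick up at most $1/n$ of extra mass from $B_R$ and paying the extra $R^p/n$ in the second moment bound, which vanishes as $n\to\infty$ anyway. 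Both variants rely on \Cref{rmk:subaddErr} with an integer split $n=n_1+n_2$ matched to the mass split, and both iterate limits in the order $n\to\infty$, then (your) $\delta\to0$, then $R\to\infty$ (resp.\ $r\to\infty$). Your observation that the partition must be chosen independently of $n$ — because $\mu^1$ itself changes with $n$ — is exactly the right technical point, and the grid-type partition of $B_R$ does the job.
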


\begin{proof}
	We may and will assume that~$\mu$ is a probability measure, not concentrated on a single point. Set
	\[
		M \coloneqq \int \norm{x}^p \, \dif \mu \comma
	\]
	and fix~$r > 0$ large enough for the ball~$B_r \coloneqq \set{x \, : \, \norm{x} < r}$ to have nonzero~$\mu$-measure.
	
	If~$\mu$ is concentrated on~$B_r$, then the conclusion follows from \Cref{thm:MerigotMirebeauChevallier}. Otherwise, let us define
	\[
		R \coloneqq \left( \frac{2M}{1-\mu(B_r
			)} \right)^{1/p}
	\]
	and notice that, by Markov's inequality,
	\begin{equation} \label{eq:infinitesimal1}
		\mu \set{x \, : \, \norm{x} \ge R} \le \frac{M}{R^p} = \frac{1-\mu(B_r)}{2} \fstop
	\end{equation}
	For every natural number~$n > \frac{2}{1-\mu(B_r)}$, define~$n_1 \coloneqq \bigl\lceil n \mu(B_r) \bigr\rceil$ and $n_2 \coloneqq n-n_1$.
	Since the~$\mu$-measure of the ball~$B_R \coloneqq \set{x \, : \, \norm{x} < R}$ can be estimated with
	\[
		\mu(B_R) \stackrel{\eqref{eq:infinitesimal1}}{\ge} 1-\frac{1-\mu(B_r)}{2} \ge \mu(B_r) + \frac{1-\mu(B_r)}{2} > \mu(B_r) + \frac{1}{n} \comma
	\]
	there exists a measure~$\mu^1$ (dependent on~$n$) such that~$\mu|_{B_r} \le \mu^1 \le \mu|_{B_R}$ and~$\normTV{\mu^1} = n_1/n$. Let~$\mu^2 \coloneqq \mu - \mu^1$. By \Cref{rmk:subaddErr},
	\[
		\eqe{p}{n}^p(\mu) \le \eqe{p}{n_1}^p(\mu^1) + \eqe{p}{n_2}^p(\mu^2) \fstop
	\]
	By \Cref{thm:MerigotMirebeauChevallier}, there exists an infinitesimal function~$f_{p,d}$ such that
	\[
		\eqe{p}{n_1}^p(\mu^1) \le \frac{n_1}{n} R^p f_{p,d}(n_1) \comma
	\]
	and, since~$\mu^2$ is concentrated on~$\R^d \setminus B_r$,
	\[
		\eqe{p}{n_2}^p(\mu^2) \le W_p^p\left( \mu^2, \normTV{\mu^2} \delta_0\right) = \int \norm{x}^p \, \dif \mu^2 \le \int_{\R^d \setminus B_r} \norm{x}^p \, \dif \mu \fstop
	\]
	
	Note that
	\[
		\limsup_{n \to \infty} \frac{n_1}{n} R^p f_{p,d}(n_1) = \mu(B_r) R^p \limsup_{n_1 \to \infty} f_{p,d}(n_1) = 0 \semicolon
	\]
	therefore, we infer that
	\[
		\limsup_{n \to \infty} \eqe{p}{n}^p(\mu) \le \int_{\R^d \setminus B_r} \norm{x}^p \, \dif \mu \comma
	\]
	and we conclude by arbitrariness of~$r$.
\end{proof}

\begin{remark}
	The minimizers of~\eqref{eq:defe} and~\eqref{eq:defetilde} are \emph{not}, in general, unique. For example, let~$\mu$ be invariant under orthogonal transformations and not concentrated at the origin. If~$n$ is large enough, by \Cref{prop:infinitesimal}, no minimizer can be concentrated at the origin; hence, infinitely many orthogonal transformations map any minimizer to \emph{other} minimizers (via pushforward).
\end{remark}

Let us conclude this section with a lemma that relates the classical quantization error and the boundary Wasserstein pseudodistance. This result will be used in the proof of the lower bound~\eqref{eq:mainbelow}.

\begin{lemma} \label{lemma:lowerBoundWb}
	Let~$\Omega$ be an open bounded nonempty subset of~$\R^d$. Choose~$\epsilon > 0$ and define the ``tightened'' open set
	\begin{equation} \label{eq:omegaeps}
		\Omega_\epsilon^- \coloneqq \set{x \in \Omega \, : \, \dist(x,\R^d \setminus \Omega) > \epsilon } \fstop
	\end{equation}
	Fix~$\mu \in \MeasRd{}$. Then, for every~$n \in \N_0$ and~$\mu_n \in \MeasRd{}$ with~$\# \supp (\mu_n|_\Omega)  \le n$, we have
	\begin{equation}
		Wb_{\Omega,p}(\mu, \mu_n) \ge e_{p,n+N}(\mu|_{\Omega_\epsilon^-})  \comma \quad \text{where} \quad 
	N \coloneqq \left \lceil\frac{\sqrt{d}\diam(\Omega)}{\epsilon} \right\rceil^d \fstop
	\end{equation}
\end{lemma}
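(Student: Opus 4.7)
The plan is to convert an optimal coupling $\gamma$ realizing $Wb_{\Omega,p}(\mu,\mu_n)$ into a classical quantization of $\mu|_{\Omega_\epsilon^-}$ with $n+N$ atoms, by enlarging the support of $\mu_n|_\Omega$ with an $\epsilon$-net that ``absorbs'' all the mass that $\gamma$ drops on $\partial\Omega$.

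To build the net, I would enclose $\overline{\Omega}$ in an axis-aligned cube of side $\diam(\Omega)$ and partition it into $N = k^d$ congruent subcubes, where $k \coloneqq \lceil \sqrt{d}\,\diam(\Omega)/\epsilon\rceil$; each subcube has diagonal at most $\epsilon$, so a choice of one representative $y_j$ per subcube yields $N$ points with $\min_j \norm{w-y_j}\le\epsilon$ for every $w\in\overline{\Omega}$. Writing $\set{x_1,\dots,x_m}\coloneqq\supp(\mu_n|_\Omega)$ with $m\le n$, I would take $\set{z_1,\dots,z_{n+N}}\coloneqq\set{x_1,\dots,x_m}\cup\set{y_1,\dots,y_N}$ (padded arbitrarily if $m<n$). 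By the reformulation~\eqref{eq:epn} of the classical quantization error as a minimum over $(n+N)$-point sets,
\[
    e_{p,n+N}^p\bigl(\mu|_{\Omega_\epsilon^-}\bigr) \le \int_{\Omega_\epsilon^-} \min_i \norm{x-z_i}^p \, \dif\mu(x).
\]

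The core step is then to bound this integral using $\gamma$. Since $\Omega_\epsilon^-\subseteq\Omega$, the first marginal of $\gamma|_{\Omega_\epsilon^-\times\overline{\Omega}}$ is $\mu|_{\Omega_\epsilon^-}$, so it suffices to prove the pointwise estimate $\min_i \norm{x-z_i}\le\norm{x-y}$ for $\gamma$-a.e.\ $(x,y) \in \Omega_\epsilon^- \times \overline{\Omega}$, and then integrate. For $y\in\Omega$ this is immediate: the second marginal of $\gamma|_{\overline{\Omega}\times\Omega}$ is $\mu_n|_\Omega$, concentrated on $\set{x_1,\dots,x_m}$, so $\gamma$-a.e.\ such $y$ is itself one of the $z_i$. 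For $y\in\partial\Omega$ one has $y\in\R^d\setminus\Omega$ (as $\Omega$ is open), hence $\norm{x-y}>\epsilon$ by definition of $\Omega_\epsilon^-$, whereas the nearest grid point to $x\in\overline{\Omega}$ satisfies $\norm{x-y_j}\le\epsilon<\norm{x-y}$. Integrating and enlarging the domain to $\overline{\Omega}\times\overline{\Omega}$ produces
\[
    e_{p,n+N}^p\bigl(\mu|_{\Omega_\epsilon^-}\bigr)\le\int_{\overline{\Omega}\times\overline{\Omega}}\norm{x-y}^p\,\dif\gamma = Wb_{\Omega,p}^p(\mu,\mu_n).
\]

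The only delicate point is the second case of the pointwise estimate: a crude triangle inequality would give $\norm{x-y_j}\le\norm{x-y}+\epsilon\le 2\norm{x-y}$, spoiling the bound with an unremovable factor $2^p$. The threshold $\epsilon$ in the definition of $\Omega_\epsilon^-$ is chosen precisely to match the covering radius of the grid so that the sharp inequality $\norm{x-y_j}\le\epsilon<\norm{x-y}$ holds directly, with no multiplicative loss. Everything else reduces to routine bookkeeping with the marginal constraints defining $\Gamma b_\Omega$.
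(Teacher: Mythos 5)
Your proof is correct and is essentially the paper's own argument: the same $\epsilon$-net of $N=\lceil \sqrt{d}\,\diam(\Omega)/\epsilon\rceil^d$ grid points and the same key comparison ($\norm{x-y}>\epsilon$ for $x\in\Omega_\epsilon^-$, $y\in\partial\Omega$, against covering radius $\le\epsilon$); the paper merely packages it by pushing the boundary-bound part of $\gamma$ forward onto the net via a map $T$ and taking the second marginal of the modified plan as the competitor quantizer, while you invoke the point-set formulation~\eqref{eq:epn} plus a pointwise $\gamma$-a.e.\ estimate. The only cosmetic adjustments: since attainment of the infimum defining $Wb_{\Omega,p}$ is not established in the paper, run the argument for an arbitrary $\gamma\in\Gamma b_\Omega(\mu,\mu_n)$ and take the infimum at the end (your chain of inequalities already holds verbatim for any such~$\gamma$), and note that~\eqref{eq:epn} extends from probability measures to finite measures such as $\mu|_{\Omega_\epsilon^-}$ by homogeneity.
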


\begin{proof}
	By considering the vertices of a suitable regular grid, it is easy to check that there exist a set~$\mathcal Y \subseteq \R^d$ with~$\# \mathcal Y \le N$ and a Borel function~$T \colon \Omega_\epsilon^- \to \mathcal Y$ such that~$\norm{x-T(x)} \le \epsilon$ for every~$x \in \Omega_\epsilon^-$. Given~$\mu, \mu_n$ as in the statement, let~$\gamma$ be a nonnegative Borel measure on~$\overline \Omega \times \overline \Omega$ such that~$\gamma|_{\Omega \times \overline \Omega}$ has~$\mu|_\Omega$ as first marginal, and~$\gamma|_{\overline \Omega \times \Omega}$ has~$\mu_n|_\Omega$ as second marginal. Let~$\pi^1 \colon \R^d \times \R^d \to \R^d$ be the projection onto the first~$d$ coordinates, and define
	\[
	\gamma' \coloneqq \gamma|_{\Omega_\epsilon^- \times \Omega} + (\pi^1, T \circ \pi^1)_\# (\gamma|_{\Omega_\epsilon^- \times \partial \Omega}) \fstop
	\]
	Let~$\nu$ be the second marginal of~$\gamma'$. Notice that~$\supp(\nu) \subseteq \supp(\mu_n|_\Omega) \cup \mathcal Y$, which implies~$\# \supp(\nu) \le n + N$. Moreover, since the norm is invariant under pushforward,
	\begin{equation*}
		\normTV{\nu}  = \normTV{\gamma|_{\Omega_\epsilon^- \times \Omega}} + \normTV{\gamma|_{\Omega_\epsilon^- \times \partial \Omega}} = \normTV{\gamma|_{\Omega_\epsilon^- \times \overline \Omega}} = \normTV{\mu|_{\Omega_\epsilon^-}} \fstop
	\end{equation*}
	Consequently,~$e_{p,n+N}(\mu|_{\Omega_\epsilon^-}) \le W_p(\mu|_{\Omega_\epsilon^-}, \nu)$. By noticing that~$\gamma' \in \Gamma(\mu|_{\Omega_\epsilon^-}, \nu)$, we deduce that
	\[
	e^p_{p,n+N}(\mu|_{\Omega_\epsilon^-}) \le \int \norm{x-y}^p \, \dif \gamma|_{\Omega_\epsilon^- \times \Omega} + \int \norm{x-T(x)}^p \, \dif \gamma|_{\Omega_\epsilon^- \times \partial \Omega} \fstop
	\]
	Moreover, by definition of~$T$ and~$\Omega_\epsilon^-$,
	\[
	\int \norm{x-T(x)}^p \, \dif \gamma|_{\Omega_\epsilon^- \times \partial \Omega} \le \int \epsilon^p \, \dif \gamma|_{\Omega_\epsilon^- \times \partial \Omega} \le \int \norm{x-y}^p \, \dif \gamma|_{\Omega_{\epsilon} \times \partial \Omega} \semicolon
	\]
	therefore,
	\[
	e^p_{p,n+N}(\mu|_{\Omega_\epsilon^-}) \le \int \norm{x-y}^p \, \dif \gamma|_{\Omega_\epsilon^- \times \overline \Omega} \le \int \norm{x-y}^p \, \dif \gamma \fstop
	\]
	We conclude by arbitrariness of~$\gamma$.
\end{proof}

\section{Previous results} \label{sec:previous}

There is a rich literature studying asymptotics for classical quantization, see, e.g.,~\cite{GrafLuschgy00,LuschgyPages23}. The following is a fundamental result by P.~Zador~\cite{Zador64,Zador82}.%

\begin{theorem}[{Zador's Theorem,~\cite[Theorem 6.2]{GrafLuschgy00}}] \label{thm:BWZ}
	Let~$\mu \in \ProRd{\theta}$ for some~$\theta > p \ge 1$ and let~$\rho$ be the density of the absolutely continuous part of~$\mu$. Then:
		\begin{equation}
			\lim_{n \to \infty} n^{1/d} e_{p,n}(\mu) = q_{p,d} \left( \int \rho^\frac{d}{d+p} \, \dif \LebRd \right)^\frac{d+p}{dp} \comma
		\end{equation}
	and the optimal quantization coefficient~$q_{p,d}$ is strictly positive.
\end{theorem}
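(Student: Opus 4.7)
My plan is to prove Zador's Theorem by reducing to the model case of the uniform measure on the unit cube, then lifting to general~$\mu$ via a partition-and-allocation argument, in the spirit of the heuristic in \Cref{sec:heuristics}.

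The first step is to establish existence and strict positivity of the limit~$q_{p,d} = \lim_{n \to \infty} n^{1/d} e_{p,n}(U_d)$. By tiling~$[0,1]^d$ into~$m^d$ congruent subcubes of side~$1/m$ and combining the subadditivity of~$e_{p,\cdot}^p$ (\Cref{rmk:subaddErr}) with the scaling of~\Cref{rmk:scaling}, one obtains the key inequality
\[
(m^d n)^{1/d} e_{p,m^d n}(U_d) \le n^{1/d} e_{p,n}(U_d),
\]
and interpolating using the monotonicity of~$n \mapsto e_{p,n}(U_d)$ upgrades convergence along the subsequence~$m^d n$ to convergence of the full sequence toward the infimum. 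Strict positivity follows from the volumetric estimate that any~$n$ balls of radius~$r$ cover Lebesgue measure~$\lesssim n r^d$, which forces~$e_{p,n}^p(U_d) \gtrsim n^{-p/d}$.

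Next I treat compactly supported~$\mu = \rho \LebRd$ with bounded density. Fix a large cube~$C \supseteq \supp \mu$, tile it by subcubes~$Q_j$ of side~$\ell$, and set~$\rho_j \coloneqq \abs{Q_j}^{-1} \mu(Q_j)$, $a_j \coloneqq \rho_j \abs{Q_j}$. For the upper bound, I allocate to~$Q_j$ a number~$n_j \approx n\, a_j^{d/(d+p)} / \sum_i a_i^{d/(d+p)}$ of quantizer points and place them quasi-optimally so that~$\max_{y \in Q_j} \min_i \norm{y-x_i}^p \lesssim \ell^p n_j^{-p/d}$. Using the cube-case sharp constant and the bound~$e^p_{p,n}(\mu) \le \sum_j \int_{Q_j} \min_i \norm{y-x_i}^p \, \dif \mu$, I obtain, for each fixed~$\ell$,
\[
\limsup_{n \to \infty} n^{1/d} e_{p,n}(\mu) \le q_{p,d} \Big( \sum_j a_j^{d/(d+p)} \Big)^{(d+p)/(dp)} \fstop
\]
The Lagrange-optimal choice of~$(n_j)_j$ together with the identity~$a_j^{d/(d+p)} = \rho_j^{d/(d+p)} \abs{Q_j}$ turns the sum into a Riemann sum for~$\int \rho^{d/(d+p)} \, \dif \LebRd$, recovered as~$\ell \to 0$. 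For the matching lower bound, I start from an optimal~$n$-quantizer, tighten each~$Q_j$ to~$Q_j^-$ by~$\epsilon$ to absorb quantizer points near boundaries (as in \Cref{lemma:lowerBoundWb}), and use the superadditivity of the boundary-Wasserstein pseudodistance (\Cref{lemma:superadditivityWb}) to split the total cost into per-subcube contributions; Jensen's inequality applied to~$t \mapsto t^{-p/d}$ with weights~$a_j^{d/(d+p)}/\sum_i a_i^{d/(d+p)}$ then extracts the same prefactor times~$q_{p,d}$.

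Finally I pass to general~$\mu \in \ProRd{\theta}$. Writing~$\mu = \mu|_{\norm{x} \le R} + \mu|_{\norm{x} > R}$, the tail is dispatched by a classical Pierce-type estimate of the form~$n^{1/d} e_{p,n}(\mu|_{\norm{x} > R}) \lesssim_{p,d,\theta} (\int_{\norm{x} > R} \norm{x}^\theta \, \dif \mu)^{1/\theta}$, which vanishes as~$R \to \infty$. Within the bounded region, the bounded-density case is extended to general~$\rho$ via the splitting~$\rho = (\rho \wedge M) + (\rho - M)_+$ and dominated convergence, using that~$\rho^{d/(d+p)} \in L^1(\LebRd)$ follows from the moment hypothesis~$\theta > p$ by H\"older's inequality. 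The singular part is handled by outer regularity: the concentration set of~$\mu^s$ can be covered by an open set of arbitrarily small Lebesgue measure where a suitable quasi-uniform placement of a vanishing fraction of the quantizer points yields an~$o(n^{-1/d})$ contribution, consistent with~$\mu^s$ not appearing in the limit integral. I expect the main obstacle to be the lower bound in the partition step: an optimal quantizer need not respect the cube tiling, and the boundary-Wasserstein tightening~$Q_j^- \subset Q_j$ via \Cref{lemma:lowerBoundWb}, combined with the superadditivity \Cref{lemma:superadditivityWb}, is the delicate ingredient needed to avoid double-counting mass transported across subcube boundaries.
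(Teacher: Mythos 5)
A preliminary remark: the paper does not prove this statement at all --- it is Zador's Theorem, quoted verbatim with a citation to Graf--Luschgy, so there is no in-paper argument to compare yours with. Your outline in fact follows the same broad route as the cited classical proof (cube case by self-similarity and subadditivity, allocation over a cubic tiling, Pierce-type truncation of the tail, singular part via open covers of small Lebesgue measure), and several ingredients are sound: the cube case (monotonicity of $n \mapsto e_{p,n}(U_d)$ is genuinely available here, unlike for $\eqe{p}{n}$), the volumetric positivity of $q_{p,d}$, and the integrability $\int \rho^{d/(d+p)}\,\dif\LebRd < \infty$ under $\theta > p$ via H\"older.

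However, the two central steps have genuine gaps as written. In the upper bound you only control the covering radius, $\max_{y \in Q_j}\min_i\norm{y-x_i} \lesssim \ell\, n_j^{-1/d}$; a sup-distance bound can never produce the sharp constant $q_{p,d}$, only a nonoptimal dimensional constant. To obtain $q_{p,d}$ you must use optimal quantizers of the \emph{uniform} measure on $Q_j$ and then control the replacement of $\mu|_{Q_j}$ by its averaged version $\rho_j \LebRd|_{Q_j}$; for a merely integrable density this replacement error is the actual technical crux (in the paper's analogous arguments for $\eqe{p}{n}$ in \Cref{sec:main} it forces the mesh $2^{-k_n}$ to be coupled to $n$ via \Cref{lemma:seqlemma} and \Cref{lemma:comparL1}), and it is not dispatched by ``dominated convergence''. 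There is also a bookkeeping error: $a_j^{d/(d+p)} = \rho_j^{d/(d+p)}\abs{Q_j}^{d/(d+p)} \neq \rho_j^{d/(d+p)}\abs{Q_j}$, so your prefactor $\bigl(\sum_j a_j^{d/(d+p)}\bigr)^{(d+p)/(dp)}$ is off by a factor $\ell^{-1}$ and diverges as $\ell \to 0$; the correct allocation is $n_j \propto \rho_j^{d/(d+p)}\abs{Q_j}$, yielding $\bigl(\sum_j \rho_j^{d/(d+p)}\abs{Q_j}\bigr)^{(d+p)/(dp)}$, which is the Riemann sum you want. Symmetrically, in the lower bound the phrase ``Jensen's inequality extracts the same prefactor'' hides the two real difficulties: (i) relating the number of quantizer points charged to each $Q_j$ to $n$ with bounded overlap, while showing that the tightening $Q_j^-$ and the extra $N$ points of \Cref{lemma:lowerBoundWb} are negligible uniformly over the $\asymp \ell^{-d}$ cubes, and (ii) again replacing $\mu|_{Q_j}$ by $\rho_j\LebRd|_{Q_j}$, this time from below, which requires an $L^1$ approximation and a joint limit in $(\ell,n)$ --- exactly where the paper's own lower-bound proof for the empirical error invests most of its effort. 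So the roadmap is the standard one, but the sharp-constant mechanism and the piecewise-constant replacement --- the substance of Zador's theorem --- are missing.
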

This theorem establishes the exact asymptotic of~$e_{p,n}(\mu)$ as~$n \to 0$ for every~$\mu$ which is not purely singular, under a moment condition (which is not dispensable, see~\cite[Example 6.4]{GrafLuschgy00}).

Less is known about the rate of convergence of~$\eqe{p}{n}(\mu)$. The case~$d=1$ has been studied in \cite{JourdainReygner16,XuBerger19,GilesHefterMayerRitter19,BencheikhJourdain22,BencheikhJourdain22bis,GilesSheridan-Methven23}. %
In particular, the following theorem determines the exact convergence rate under a suitable assumption.

\begin{theorem}[C.~Xu and A.~Berger, {\cite[Theorem~5.15]{XuBerger19}}] \label{thm:xuberger}
	Let~$\mu \in \mathcal P_p(\R)$. If the (upper) quantile function
		\begin{equation}
			F_\mu^{-1} (t) \coloneqq \sup\set{x \in \R \, : \, \mu\bigl((-\infty,x] \bigr) \le t} \comma \qquad t \in (0,1)
		\end{equation}
		is absolutely continuous, then
		\begin{equation} \label{eq:limXuBerger}
			\lim_{n \to \infty} n \eqe{p}{n}(\mu) = q_{p,1} \norm{\frac{\dif F^{-1}_\mu}{\dif t}}_{L^p} \fstop
		\end{equation}
\end{theorem}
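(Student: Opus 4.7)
The plan is to leverage the one-dimensional identity
\[
W_p^p(\mu,\nu) = \int_0^1 \bigl|F_\mu^{-1}(t) - F_\nu^{-1}(t)\bigr|^p \, dt
\]
and reduce the asymptotic analysis to a Riemann-sum argument applied to $g' := dF_\mu^{-1}/dt$. For an empirical measure $\nu_n = \frac{1}{n}\sum_{i=1}^n \delta_{x_i}$ with sorted atoms $x_1 \le \dots \le x_n$, the quantile $F_{\nu_n}^{-1}$ equals $x_i$ on $I_i := \bigl((i-1)/n,\, i/n\bigr]$. Writing $g := F_\mu^{-1}$, the displayed identity gives
\[
\eqe{p}{n}^p(\mu) = \sum_{i=1}^n \min_{x \in \R} \int_{I_i} |g(t)-x|^p \, dt,
\]
so the problem decouples into $n$ independent one-dimensional $L^p$-centering problems, one per interval.

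I would then rescale each subproblem by setting $t = (i-1)/n + s/n$. Using the absolute continuity of $g$, one obtains
\[
\min_x \int_{I_i} |g(t)-x|^p \, dt = \frac{1}{n^{p+1}} \min_{y\in\R} \int_0^1 \bigl|\Phi_i^n(s) - y\bigr|^p ds, \quad \Phi_i^n(s) := \int_0^s g'\bigl((i-1)/n + r/n\bigr) dr.
\]
For every Lebesgue point $\tau$ of $g'$ and the index $i = i(n,\tau)$ with $\tau \in I_i$, the function $r \mapsto g'((i-1)/n + r/n)$ converges in $L^1(0,1)$ to the constant $g'(\tau)$, so $\Phi_i^n \to g'(\tau)\, s$ uniformly on $[0,1]$ and the inner minimum tends to $|g'(\tau)|^p \min_y \int_0^1 |s-y|^p ds = |g'(\tau)|^p / \bigl((p+1)2^p\bigr)$. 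Multiplying by $n^p$ and summing in $i$, the quantity $n^p \eqe{p}{n}^p(\mu)$ becomes a Riemann-type average that should converge to $\frac{1}{(p+1)2^p}\int_0^1 |g'(t)|^p dt = \frac{1}{(p+1)2^p}\, \|dF_\mu^{-1}/dt\|_{L^p}^p$.

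To justify this convergence when $g' \in L^p$ only (no continuity or uniform boundedness), I would first prove the identity for piecewise affine approximants $g_k$ — where each inner minimum admits an exact closed form and the Riemann identity is immediate — and then pass to the limit by approximating $g$ in $W^{1,p}(0,1)$ and using the Lipschitz dependence of $\eqe{p}{n}(h_\# \LebRd|_{(0,1)})$ on $h$ in the $L^p$-norm of $h'$. The prefactor is identified by specializing to $\mu = U_1$: then $g(t)=t$ and a direct computation with equispaced atoms $x_i = (2i-1)/(2n)$ yields $\eqe{p}{n}(U_1) = \bigl(2(p+1)^{1/p} n\bigr)^{-1}$ \emph{exactly}, so the constant equals $\eqc{p}{1} = q_{p,1} = 1/\bigl(2(p+1)^{1/p}\bigr)$ (the two coincide in dimension one), giving~\eqref{eq:limXuBerger}. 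The main obstacle is the controlled passage from pointwise convergence of the rescaled inner minima to convergence of their average under only $L^p$ regularity of $g'$; the piecewise-affine approximation strategy above, combined with the exact formula available on approximants, circumvents it cleanly.
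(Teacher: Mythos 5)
The paper does not prove this statement at all: it is quoted verbatim from Xu and Berger \cite{XuBerger19}, so there is no internal proof to compare against. Your quantile-function plan is nonetheless the natural one-dimensional route, and its core is sound: the identity $W_p^p(\mu,\nu)=\int_0^1\abs{F_\mu^{-1}(t)-F_\nu^{-1}(t)}^p\dif t$, together with the observation (which you should state explicitly) that the unconstrained $L^p$-centers of the nondecreasing function $g=F_\mu^{-1}$ on the intervals $I_i$ are automatically nondecreasing, justifies the decoupling $\eqe{p}{n}^p(\mu)=\sum_i\min_x\int_{I_i}\abs{g-x}^p\dif t$; the Lebesgue-point argument works for a.e.~$\tau$ because the partition interval containing $\tau$ sits inside $[\tau-1/n,\tau+1/n]$ with comparable length; and the constant $\min_y\int_0^1\abs{s-y}^p\dif s=\tfrac{1}{(p+1)2^p}$ matches $q_{p,1}=\eqc{p}{1}=\tfrac{1}{2(p+1)^{1/p}}$ from \Cref{ex:1dim}.

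The one step that would fail as literally written is the final limit interchange. The bound $\abs{\eqe{p}{n}(h_\#\LebRd|_{(0,1)})-\eqe{p}{n}(\tilde h_\#\LebRd|_{(0,1)})}\le W_p(h_\#\LebRd|_{(0,1)},\tilde h_\#\LebRd|_{(0,1)})\le\norm{h-\tilde h}_{L^p}$ is useless here, because the quantity of interest is $n\,\eqe{p}{n}$: for a fixed piecewise-affine approximant the error term gets multiplied by $n$ and blows up. What you need, and what is true, is a Lipschitz estimate \emph{uniform in $n$} for the rescaled error, $n\,\abs{\eqe{p}{n}(h_\#\LebRd|_{(0,1)})-\eqe{p}{n}(\tilde h_\#\LebRd|_{(0,1)})}\le\norm{h'-\tilde h'}_{L^p(0,1)}$; it follows from your own decoupling via the Poincar\'e bound $\min_c\norm{f-c}_{L^p(I)}\le\abs{I}\,\norm{f'}_{L^p(I)}$ on each interval of length $1/n$, the fact that the distance to the subspace of constants satisfies $\abs{\dist(h,C)-\dist(\tilde h,C)}\le\dist(h-\tilde h,C)$, and the triangle inequality in $\ell^p$ over $i$. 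Alternatively, you can skip the approximation entirely: by Jensen the rescaled per-interval minima are dominated by the conditional averages of $\abs{g'}^p$ over the partition $\set{I_i}_i$, which converge to $\abs{g'}^p$ a.e.\ and in $L^1$ when $g'\in L^p$, so a generalized dominated convergence (Pratt) argument closes the proof directly. Two smaller points to record: when $\norm{g'}_{L^p}=\infty$ (in particular whenever $\mu$ has unbounded support, since then $g'\notin L^1(0,1)$) the claimed limit is $+\infty$, and this already follows from your pointwise convergence together with Fatou; and since $F_\mu^{-1}$ is in general absolutely continuous only on compact subintervals of $(0,1)$, the representation $g\bigl(\tfrac{i-1}{n}+\tfrac{s}{n}\bigr)=g\bigl(\tfrac{i-1}{n}\bigr)+\tfrac1n\Phi_i^n(s)$ on the two boundary intervals should be accompanied by the remark that in the finite case $g'\in L^p\subseteq L^1$ forces $g$ to extend absolutely continuously to $[0,1]$.
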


For general measures in arbitrary dimension, we have the following theorem, independently proven in~\cite{MerigotMirebeau16} (only for~$p=2$) and~\cite{Chevallier18}.

	\begin{theorem}[Q.~Mérigot and J.-M.~Mirebeau,~{\cite[Proposition~12]{MerigotMirebeau16}}; J.~Chevallier,~{\cite[Theorem~3]{Chevallier18}}] \label{thm:MerigotMirebeauChevallier}
		If~$\mu \in \ProRd{{\mathrm{c}}}$ is supported in~$[-r,r]^d$ for some~$r > 0$, then:
		\begin{equation} \label{eq:thm:MerigotMirebeauChevallier}
			\eqe{p}{n}(\mu) \lesssim_{p,d} r \cdot \begin{cases}
				n^{-1/d} &\text{if } p<d \comma \\
				(1+\log n)^{1/d} n^{-1/d} &\text{if } p = d \comma \\
				n^{-1/p} &\text{if } p > d \comma
			\end{cases}
			\qquad n \in \N_1 \fstop
		\end{equation}
	\end{theorem}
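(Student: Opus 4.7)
The plan is a hierarchical dyadic construction of a quantizer, combined with a telescoping estimate for $W_p$. At each level $L \ge 0$, partition $[-r,r]^d$ into $2^{Ld}$ congruent subcubes of side $\ell_L \coloneqq 2r \cdot 2^{-L}$. Starting from the root count $n_{[-r,r]^d} = n$, assign top-down an integer quota $n_Q$ to each cube $Q$: at each split, distribute $n_Q$ among the $2^d$ children $Q'$ as nonnegative integers summing to $n_Q$ and satisfying $\abs{n_{Q'} - n_Q \mu(Q')/\mu(Q)} \le 1$ (ordinary rounding, with $n_Q = 0$ on cubes where $\mu(Q) = 0$). Stop at depth $L_* \coloneqq \lceil \log_2(n)/d \rceil$: each leaf carries $n_Q = O(1)$ atoms, and placing each atom at the center $c_Q$ of its leaf yields $\mu_n \coloneqq \frac{1}{n}\sum_Q n_Q \delta_{c_Q} \in \MeasRd{(n)}$ with the correct total mass.

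To bound $W_p(\mu, \mu_n)$, introduce the intermediate measures $\mu^{(L)}$ with $\mu^{(L)}|_Q \coloneqq \frac{n_Q}{n\mu(Q)} \mu|_Q$ on each level-$L$ cube $Q$, so that $\mu^{(0)} = \mu$. The passage from $\mu^{(L)}$ to $\mu^{(L+1)}$ only redistributes mass \emph{inside} each level-$L$ cube $Q$, and the rearranged mass inside $Q$ is at most $\sum_{Q' \subset Q}(1/n) = 2^d/n$ (one rounding correction of size $\le 1/n$ per child). Hence by the cube-wise subadditivity of \Cref{lemma:subadditivityWasserstein},
\begin{equation*}
	W_p^p(\mu^{(L)}, \mu^{(L+1)}) \lesssim_d \ell_L^p \cdot \frac{2^{Ld}}{n} = \frac{r^p}{n} \cdot 2^{L(d-p)} \fstop
\end{equation*}
The same argument controls the final ``collapse'' $W_p^p(\mu^{(L_*)}, \mu_n) \lesssim_d r^p n^{-p/d}$. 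Applying the triangle inequality for $W_p$ across all $L_* + 1$ transitions then yields
\begin{equation*}
	W_p(\mu,\mu_n) \lesssim_d r \, n^{-1/p} \sum_{L=0}^{L_*} 2^{L(d/p-1)} \fstop
\end{equation*}

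For $p < d$, the geometric series is dominated by $L = L_*$ and evaluates to $\asymp n^{1/p - 1/d}$, giving $W_p(\mu,\mu_n) \lesssim_d r n^{-1/d}$. For $p > d$, it is dominated by $L = 0$ and gives $W_p(\mu,\mu_n) \lesssim_d r n^{-1/p}$. For $p = d$, each of the $\asymp \log n$ terms contributes $r n^{-1/d}$, so the naive triangle bound yields only $r \log(n) \, n^{-1/d}$, short of the claimed $r (\log n)^{1/d} n^{-1/d}$ by a factor $(\log n)^{1-1/d}$.

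The main obstacle is precisely the $p = d$ crossover. To recover the sharp $(\log n)^{1/d}$ exponent one cannot merely triangle-sum $W_p$ across the $L_*$ levels; instead, the per-level couplings must be bundled into a \emph{single} coupling of $\mu$ to $\mu_n$ whose cost is estimated directly in $W_p^p$, amounting to an $\ell^p$-Minkowski inequality along paths through the hierarchy (rather than the $\ell^1$-Minkowski supplied by the triangle inequality). The remaining ingredients---consistent top-down integer rounding and cube-wise subadditivity---are routine once the hierarchical setup is in place.
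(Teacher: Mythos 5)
Your dyadic construction is fine for $p\neq d$: the per-level rounding, the cube-wise use of \Cref{lemma:subadditivityWasserstein}, and the geometric series give the stated bounds $r\,n^{-1/d}$ ($p<d$) and $r\,n^{-1/p}$ ($p>d$) with constants depending only on $p,d$ (the side claim that each leaf carries $O(1)$ atoms is not true in general, e.g.\ for atomic $\mu$, but it is never used). The genuine gap is exactly where you flag it, at $p=d$, and the fix you propose does not close it. Bundling the per-level plans into a single coupling means composing transports along a chain of $K=L_*+1\asymp\log n$ steps, and for a chain $x_0,\dots,x_K$ one only has $\norm{x_0-x_K}^p\le K^{p-1}\sum_j\norm{x_{j+1}-x_j}^p$; there is no ``$\ell^p$-Minkowski along paths''. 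Feeding this into your per-level estimates at $p=d$ gives $W_d^d(\mu,\mu_n)\lesssim_{d}(\log n)^{d-1}\cdot r^d(\log n)/n$, i.e.\ exactly the $r\,(\log n)\,n^{-1/d}$ you already had from the triangle inequality. Summing $p$-th powers of costs is legitimate only when the pieces are transported \emph{in parallel} (disjoint sub-measures added up, which is what \Cref{lemma:subadditivityWasserstein} expresses), not sequentially along a hierarchy; so the missing ingredient is not routine, it is the heart of the $p=d$ case.

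The arguments of M\'erigot--Mirebeau and Chevallier (and the closely analogous proof of \Cref{thm:nonasy} in \Cref{sec:nonasy} of this paper) avoid the chain altogether. Using Chevallier's extraction lemma one peels off from $\mu$ disjoint sub-measures $\eta_1,\dots,\eta_n$, each of mass exactly $1/n$, with $\diam(\supp\eta_k)\lesssim_d r\,(n-k+1)^{-1/d}$, the diameter degrading as the remaining mass $1-(k-1)/n$ shrinks. Sending each $\eta_k$ to a single point of its support and applying \Cref{lemma:subadditivityWasserstein} once yields $\eqe{p}{n}^p(\mu)\lesssim_{p,d}\frac{r^p}{n}\sum_{k=1}^{n}(n-k+1)^{-p/d}$, and this one sum produces all three regimes simultaneously, in particular the sharp $(1+\log n)^{1/d}$ at $p=d$. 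To complete your proof you need either this extraction device or some other one-shot decomposition of $\mu$ into $n$ pieces of mass $1/n$ with controlled diameters; the multi-level rounding scheme by itself cannot reach the stated bound at $p=d$.
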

	
	Combined with \Cref{thm:BWZ} and \Cref{rmk:scaling}, this theorem determines the speed of convergence~$\eqe{p}{n}(\mu) \asymp_{p,d,\mu} n^{-1/d}$ in the regime~$p<d$ for every~$\mu$ which is compactly supported and not purely singular:
	\begin{equation} \label{eq:MerigotMirebeauChevallierbis}
		q_{p,d} \left( \int \rho^\frac{d}{d+p} \, \dif \LebRd \right)^\frac{d+p}{dp} \le \liminf_{n \to \infty} n^{1/d} \eqe{p}{n}(\mu) \le \limsup_{n \to \infty} n^{1/d} \eqe{p}{n}(\mu) \lesssim_{p,d} r \comma
	\end{equation}
	where~$\rho$ is as in \Cref{thm:BWZ} and~$r$ is as in \Cref{thm:MerigotMirebeauChevallier}.
	
	We note that also for~$p > d$ the upper bound~\eqref{eq:thm:MerigotMirebeauChevallier} is tight, in the sense that there exist compactly supported measures---even absolutely continuous and with smooth densities---for which
	\begin{equation} \label{eq:disconnected}
		\limsup_{n \to \infty} n^{1/p} \eqe{p}{n}(\mu) > 0 \comma
	\end{equation}
	as demonstrated by the following example; see also the~$1$-dimensional case in~\cite[Remark~5.22]{XuBerger19}.
	
	\begin{example} \label{ex:distant}
		Assume that~$\mu \in \ProRd{p}$ is concentrated on the union of two distant sets~$A,B \subseteq \R^d$ with~$\mu(A)>0$ and~$\mu(B) > 0$. Then~\eqref{eq:disconnected} holds.
	\end{example}
	\begin{proof} Le us write~$r \coloneqq \dist(A,B)$. Define
		\[
		\tilde A \coloneqq \set{x \in \R^d \, : \, \dist(x,A) \le \dist(x,B)} \comma \quad \tilde B \coloneqq \R^d \setminus \tilde A \fstop
		\]
		Given~$n \in \N_1$, take any~$\mu_n \in \ProRd{(n)}$ and~$\gamma \in \Gamma(\mu, \mu_n)$. We have
		\begin{align*}
			\int \norm{x-y}^p \, \dif \gamma &\ge \left(\frac{r}{2}\right)^p \left( \gamma(A \times \tilde B) + \gamma(B \times \tilde A) \right) \ge \left(\frac{r}{2}\right)^p \abs{ \gamma(A \times \tilde B) - \gamma(B \times \tilde A) } \\
			&= \left(\frac{r}{2}\right)^p \abs{ \mu_n(\tilde B)-\gamma(B \times \tilde B) - \gamma(B \times \tilde A) } = \left(\frac{r}{2}\right)^p \abs{ \mu_n(\tilde B)-\mu(B) } \fstop
		\end{align*}
		Let us denote by~$\tau(n)$ the fractional part of~$n \mu(B)$. Since~$n \mu_n(\tilde B) \in \N_0$, we get
		\[
		\int \norm{x-y}^p \, \dif \gamma \ge \frac{1}{n} \left(\frac{r}{2}\right)^p \min\bigl(\tau(n), 1-\tau(n)\bigr) \comma
		\]
		and, by arbitrariness of~$\gamma$ and~$\mu_n$, we find
		\[
		n^{1/p}\eqe{p}{n}(\mu) \ge \frac{r}{2} \min\bigl(\tau(n), 1-\tau(n) \bigr)^{1/p} \comma \qquad n \in \N_1 \fstop
		\]
		To conclude~\eqref{eq:disconnected}, it suffices to prove that, for infinitely many numbers~$n \in \N_1$, we have~$\tau(n) \in [1/3,2/3]$. Firstly, we note that
		\[
		\tau(n) = 0 \quad \Rightarrow \quad \tau(n+1) = \mu(B) \in (0,1) \semicolon
		\]
		hence~$\tau(n) \in (0,1)$ frequently. Finally, it is easy to check that
		\[
		\tau(n) \in (0,1) \setminus [1/3,2/3] \quad \Rightarrow \quad \tau\left( \left \lceil\frac{1}{ 3 \min\bigl(\tau(n), 1-\tau(n)\bigr) } \right \rceil n\right) \in [1/3,2/3] \fstop \qedhere
		\]
	\end{proof}

	For~$p = d$, it is still unknown whether the logarithmic term in~\eqref{eq:thm:MerigotMirebeauChevallier} is necessary (for compactly supported measures), see \cite[Remark~1]{Chevallier18}.

In addition to \Cref{thm:MerigotMirebeauChevallier}, we mention the results in~\cite{GilesHefterMayerRitter19,GilesHefterMayerRitter19bis}, applicable to certain measures in infinite-dimensional Banach spaces,~\cite{BrownSteinerberger20} for the volume measure on a compact manifold, and the upper bounds that can be deduced from the theory of random empirical quantization~\cite{Ledoux23} using the trivial inequality
\[
	\eqe{p}{n}(\mu) \le \mathbb E\left[ W_p\left(\mu, \frac{1}{n} \sum_{i=1}^n \delta_{X_i} \right)\right]\comma
\]
valid for every family of random variables~$\set{X_1,\dots,X_n}$. In particular, the following theorems already provide an upper estimate of the form~\eqref{eq:mainabove} and a nonasymptotic upper bound like~\Cref{thm:nonasy} \emph{in the regime~$p < d/2$}.

\begin{theorem}[{S.~Dereich, M.~Scheutzow, and R.~Schottstedt,~\cite[Theorem~2]{DereichScheutzowSchottstedt13}}] \label{thm:DereichScheutzowSchottstedt}
	Under the assumptions of \Cref{thm:main1}, further suppose that~$p < d/2$, and that~$\rho$ is Riemann integrable or~$p=1$. If~$X_1,X_2,\dots$ is a sequence of~$\mu$-distributed i.i.d.~random variables, then
	\begin{equation} \label{eq:thm:DereichScheutzowSchottstedt}
		\mathbb E\left[ W_p^p\left(\mu, \frac{1}{n} \sum_{i=1}^n \delta_{X_i} \right)\right]^{1/p} \asymp_{p,d}n^{-1/d} \left(\int_{\R^d} \rho^\frac{d-p}{d} \, \dif \LebRd \right)^{1/p} \quad \text{as } n \to \infty \fstop
	\end{equation}
\end{theorem}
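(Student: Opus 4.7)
The plan is a partition-and-rescale argument that reduces the general density to the uniform base case on a cube.

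\emph{Step 1 (Uniform base case).} I first establish that for i.i.d.\ uniform samples $Y_1,\dots,Y_n$ on $[0,1]^d$ there exist dimensional constants $0<c^-_{p,d}\le c^+_{p,d}<\infty$ with
\begin{equation*}
c^-_{p,d} \;\le\; n^{1/d}\,\mathbb{E}\bigl[W_p^p\bigl(U_d,\tfrac{1}{n}\sum_{i=1}^n\delta_{Y_i}\bigr)\bigr]^{1/p} \;\le\; c^+_{p,d}, \qquad n\ge 1.
\end{equation*}
The upper bound is the Ajtai--Koml\'os--Tusn\'ady / Talagrand optimal matching theorem, whose sharp range of validity is precisely $p<d/2$. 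The lower bound is immediate because $\tfrac{1}{n}\sum_i\delta_{Y_i}$ is supported on at most $n$ atoms, so $W_p(U_d,\tfrac{1}{n}\sum_i\delta_{Y_i})\ge e_{p,n}(U_d)\ge q_{p,d}\,n^{-1/d}$ by the definition of $q_{p,d}$ in \Cref{def:eqe}.

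\emph{Step 2 (Upper bound for general $\mu$).} I tile $\R^d$ by axis-parallel cubes $\{Q_j\}$ of small side $h>0$, set $m_j\coloneqq\mu(Q_j)$, let $n_j\coloneqq\#\{i\colon X_i\in Q_j\}$, and let $\hat\mu_j\coloneqq\tfrac{1}{n_j}\sum_{X_i\in Q_j}\delta_{X_i}$ when $n_j\ge 1$. A transport between $\mu$ and $\tfrac{1}{n}\sum_i\delta_{X_i}$ is constructed in two stages: first shuffle a total mass $\sum_j\bigl|\tfrac{n_j}{n}-m_j\bigr|$ among adjacent cubes (at cost $O(h)$ per unit), then match cube by cube. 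By \Cref{lemma:subadditivityWasserstein} the first stage contributes an expected $p$-cost of order $h^p\sum_j \mathbb{E}|n_j-nm_j|/n \lesssim h^{p-d/2}n^{-1/2}$ (Marcinkiewicz--Zygmund plus Cauchy--Schwarz), which for fixed $h$ is $o(n^{-p/d})$ precisely because $p<d/2$. Conditionally on the counts, the cube-by-cube term on $Q_j$ is a rescaled uniform matching on a cube of side $h$ with $n_j$ sample points and total mass $n_j/n$; Step~1 together with the scaling \eqref{eq:scalingDilation} bounds its conditional $p$-cost by $(c^+_{p,d})^p h^p n_j^{1-p/d}/n$. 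Taking expectations, applying Jensen to the concave $x\mapsto x^{1-p/d}$, and using $\mathbb{E}[n_j]=nm_j$ yield
\begin{equation*}
\mathbb{E}\bigl[W_p^p\bigl(\mu,\tfrac{1}{n}\sum_i\delta_{X_i}\bigr)\bigr] \;\lesssim\; (c^+_{p,d})^p\, n^{-p/d}\sum_j h^d\Bigl(\tfrac{m_j}{h^d}\Bigr)^{(d-p)/d} \;+\; o(n^{-p/d}).
\end{equation*}
Since $\rho$ is Riemann integrable, the sum converges as $h\to 0$ to $\int_{\R^d}\rho^{(d-p)/d}\,\dif\LebRd$; the $\theta$-moment hypothesis with $\theta>p^*=dp/(d-p)$ ensures via a H\"older truncation that the contribution of distant cubes is negligible.

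\emph{Step 3 (Lower bound).} Inside a large open cube $\Omega$ containing the bulk of $\mu$, partition $\Omega$ into subcubes $\{Q_j\}$ of side $h$. By the superadditivity of the boundary Wasserstein pseudodistance (\Cref{lemma:superadditivityWb}),
\begin{equation*}
W_p^p\bigl(\mu,\tfrac{1}{n}\sum_i\delta_{X_i}\bigr) \;\ge\; \sum_j Wb_{Q_j,p}^p\bigl(\mu|_{Q_j},\tfrac{1}{n}\sum_i\delta_{X_i}\bigr).
\end{equation*}
\Cref{lemma:lowerBoundWb} bounds each term below by $e^p_{p,\,n_j+N_j}\bigl(\mu|_{(Q_j)^-_\epsilon}\bigr)$, where $N_j=O((h/\epsilon)^d)$. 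Choosing $\epsilon=o(h)$ so that $N_j=o(n_j)$ with high probability and noting that on $(Q_j)^-_\epsilon$ the density $\rho$ is approximately constant, Zador's theorem (\Cref{thm:BWZ}) produces the expected contribution on each cube of the right order; summing and applying Jensen to the convex $x\mapsto x^{-p/d}$ with $\mathbb{E}[n_j]\approx n\rho_j h^d$ yields a Riemann approximation to $q_{p,d}^p\, n^{-p/d}\int_{\R^d}\rho^{(d-p)/d}\,\dif\LebRd$.

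The principal difficulty is the commutation of the limits $n\to\infty$ and $h\to 0$ in Step~2: one must control the mass-mismatch correction uniformly in $h$, and this is precisely what the sharp threshold $p<d/2$ buys. The alternative hypothesis $p=1$ circumvents Riemann integrability altogether by using Kantorovich--Rubinstein duality globally and the $L^1$-comparison of \Cref{lemma:comparL1} locally inside each cube.
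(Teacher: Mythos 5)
The paper does not prove this statement: it is quoted verbatim as \cite[Theorem~2]{DereichScheutzowSchottstedt13} in the ``Previous results'' section, so there is no proof in the paper to compare your attempt against. What can be assessed is whether your sketch is a plausible reconstruction of the cited argument, and it is broadly in the right spirit (a partition-and-rescale decomposition, subadditivity of $W_p^p$ for the upper bound, superadditivity of $Wb_{\Omega,p}^p$ for the lower bound).

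A few points in the sketch are imprecise or would not survive scrutiny as written. First, in Step~2 the claim that the mass-mismatch $\sum_j\bigl|\tfrac{n_j}{n}-m_j\bigr|$ can be ``shuffled among adjacent cubes at cost $O(h)$ per unit'' is unjustified: a single layer of cubes does not guarantee that excess and deficit are spatially adjacent, so you either need a recursive/dyadic shuffle across scales (which is what DSS actually do, and which is where the threshold $p<d/2$ truly enters) or you must accept the cruder $O\bigl(\diam(\supp\mu)\bigr)^p$ cost per unit, which still gives $o(n^{-p/d})$ for fixed $h$ but changes the constant tracking. Second, attributing the uniform base case to the ``Ajtai--Koml\'os--Tusn\'ady / Talagrand optimal matching theorem, whose sharp range of validity is $p<d/2$'' is misleading: AKT is the $d=2$, $p=1$ result and carries a $\sqrt{\log n}$ factor; the sharp $n^{-1/d}$ rate in the range $p<d/2$ is precisely what \cite{DereichScheutzowSchottstedt13} establishes, so invoking it as a black box makes the argument circular unless you substitute an independent proof (e.g.\ a direct dyadic argument on $[0,1]^d$). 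Third, in Step~3 you apply Zador's theorem (\Cref{thm:BWZ}) to each cube, but Zador's result is asymptotic in the number of points, whereas $n_j+N_j$ is a random variable; the definitional bound $e_{p,m}(U_d)\ge q_{p,d}\,m^{-1/d}$, valid for \emph{every} $m$, combined with Jensen for the convex map $m\mapsto m^{-p/d}$, is the correct replacement and you do gesture at it, but the appeal to Zador itself should be removed. Finally, the side remark that $p=1$ ``circumvents Riemann integrability via Kantorovich--Rubinstein duality'' is asserted without any indication of how the duality interacts with the cube decomposition, and would need to be fleshed out to constitute an argument.
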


\begin{theorem}[{S.~Dereich, M.~Scheutzow, and R.~Schottstedt,~\cite[Theorem~1]{DereichScheutzowSchottstedt13}}] \label{thm:DereichScheutzowSchottstedtPierce}
	Under the assumptions of \Cref{thm:main1}, further suppose that~$p < d/2$. If~$X_1,X_2,\dots$ is a sequence of~$\mu$-distributed i.i.d.~random variables, then
	\begin{equation}
		\mathbb E\left[ W_p^p\left(\mu, \frac{1}{n} \sum_{i=1}^n \delta_{X_i} \right)\right]^{1/p} \lesssim_{p,d,\theta} n^{-1/d} \left( \int \norm{x}^\theta \, \dif \mu \right)^{1/\theta} \comma \qquad n \in \N_1 \fstop
	\end{equation}
\end{theorem}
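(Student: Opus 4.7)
The plan is to prove the Pierce-type bound via a dyadic annular decomposition of~$\R^d$, combined with the upper bound of \Cref{thm:MerigotMirebeauChevallier} applied to each annular piece. The critical exponent~$p^* = \frac{dp}{d-p}$ will emerge as the convergence threshold of a geometric series. By the~$1/p$-homogeneity and dilation-scaling properties of~$\eqe{p}{n}$ collected in \Cref{rmk:scaling}, both sides of~\eqref{eq:thm:nonasy:0} are positively homogeneous of degree one under the dilation~$x \mapsto \lambda x$, so we may assume~$\int \norm{x}^\theta \, \dif \mu = 1$.

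For the decomposition, set~$R_0 \coloneqq [-1,1]^d$ and~$R_k \coloneqq [-2^k,2^k]^d \setminus [-2^{k-1},2^{k-1}]^d$ for~$k \ge 1$; by Markov's inequality the masses~$m_k \coloneqq \mu(R_k)$ satisfy~$m_k \lesssim 2^{-k\theta}$. We pick a cutoff~$K \asymp (\log_2 n)/\theta$ so that~$m_k < 1/n$ for all~$k > K$ and~$\sum_{k > K} m_k \lesssim 1/n$. We then split~$\mu = \mu^{(0)} + \cdots + \mu^{(K)} + \mu^{(\infty)}$, where each~$\mu^{(k)}$ for~$k \le K$ carries total mass~$n_k/n$ with~$n_k \asymp n m_k$ a nonnegative integer, and~$\mu^{(\infty)}$ absorbs the outer tail (plus a small overflow from~$R_K$) with integer weight~$n_\infty = O(1)$; the integers~$n_0, \dots, n_K, n_\infty$ sum to~$n$. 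A cumulative-mass rounding procedure allows each~$\mu^{(k)}$ to be supported in at most two consecutive rings, hence in a region of diameter~$O(2^k)$.

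By iterated application of the mass-balanced subadditivity of \Cref{rmk:subaddErr} (applicable because every piece has mass of the form integer over~$n$),
\[ \eqe{p}{n}^p(\mu) \le \sum_{k=0}^{K} \eqe{p}{n_k}^p(\mu^{(k)}) + \eqe{p}{n_\infty}^p(\mu^{(\infty)}). \]
Applying \Cref{thm:MerigotMirebeauChevallier}---using crucially~$p < d$---together with the scaling properties of \Cref{rmk:scaling} yields~$\eqe{p}{n_k}^p(\mu^{(k)}) \lesssim_{p,d} 2^{kp} n_k^{1 - p/d}/n$. Substituting~$n_k \asymp n m_k \lesssim n\, 2^{-k\theta}$ produces the geometric series
\[ \sum_{k=0}^{K} \frac{2^{kp}}{n} n_k^{1-p/d} \lesssim n^{-p/d} \sum_{k \ge 0} 2^{k[p - \theta(1 - p/d)]}, \]
which converges exactly when~$\theta > p^*$. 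The tail is estimated by collapsing~$\mu^{(\infty)}$ onto a Dirac at the origin:~$\eqe{p}{n_\infty}^p(\mu^{(\infty)}) \le \int \norm{x}^p \, \dif \mu^{(\infty)} \lesssim n^{-(\theta-p)/\theta}$; since~$(\theta-p)/\theta > p/d$ precisely when~$\theta > p^*$, this term is of strictly lower order than~$n^{-p/d}$, and taking~$p$-th roots concludes.

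The principal technical hurdle is the \emph{mass-balance constraint} in the empirical subadditivity of \Cref{rmk:subaddErr}: because every atom of an empirical quantizer must weigh exactly~$1/n$, one cannot split~$\mu$ cleanly along ring boundaries when~$n m_k$ is not an integer. The remedy---letting each~$\mu^{(k)}$ straddle two adjacent rings and collapsing the outer tail (where even a single ring has mass~$< 1/n$) into one extra piece---preserves the~$O(2^k)$ diameter estimate up to a universal constant. Verifying that both the principal geometric series and the tail correction fall below the~$n^{-p/d}$ rate is where the hypothesis~$\theta > p^*$ is used in a tight way.
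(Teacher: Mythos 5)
Your proposal does not prove the statement at hand. The theorem you were asked to prove concerns the \emph{random} empirical measure: it bounds $\mathbb E\bigl[ W_p^p\bigl(\mu, \frac{1}{n} \sum_{i=1}^n \delta_{X_i} \bigr)\bigr]^{1/p}$ for i.i.d.\ $\mu$-distributed samples $X_1,\dots,X_n$. Your argument never mentions the random variables $X_i$, never takes an expectation, and never uses the hypothesis $p<d/2$; instead it constructs a good \emph{deterministic} configuration and bounds the optimal empirical quantization error $\eqe{p}{n}(\mu)$. Since $\eqe{p}{n}(\mu)$ is an infimum over all configurations, one only has $\eqe{p}{n}^p(\mu) \le \mathbb E\bigl[ W_p^p\bigl(\mu, \frac{1}{n}\sum_i \delta_{X_i}\bigr)\bigr]$, so an upper bound on $\eqe{p}{n}(\mu)$ gives no control whatsoever on the expectation over i.i.d.\ samples: the empirical measure of random draws is in no way an optimal quantizer. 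The missing (and essential) ingredient is a probabilistic estimate of how close the random point counts in small regions are to $n$ times their $\mu$-mass, e.g.\ bounds of the type $\mathbb E\abs{\mu(Q) - \frac{1}{n}\#\{i : X_i \in Q\}} \lesssim \sqrt{\mu(Q)/n}$ on dyadic cubes; it is precisely in summing these fluctuation terms that the stronger hypothesis $p < d/2$ is needed, which is why the random statement carries that restriction while the deterministic bound holds for all $p<d$.

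Note also that the paper itself does not prove this theorem: it is quoted from Dereich--Scheutzow--Schottstedt \cite[Theorem~1]{DereichScheutzowSchottstedt13}, whose proof is genuinely probabilistic. What you have written is, in substance, a proof of \Cref{thm:nonasy} (the paper's deterministic Pierce-type bound), carried out with a dyadic annular decomposition rather than the paper's iterative extraction of mass from growing cubes via \cite[Lemma~1]{Chevallier18}; as a proof of \emph{that} theorem your scheme is reasonable (the geometric series $\sum_k 2^{k[p-\theta(1-p/d)]}$ converges exactly under $\theta>p^*$, matching the role of $\frac{p}{\theta}+\frac{p}{d}<1$ in the paper), but it answers a different question from the one posed.
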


In a recent preprint, E.~Caglioti, M.~Goldman, F.~Pieroni, and D.~Trevisan~\cite{CagliotiGoldmanPieroniTrevisan24} extended~\cite[Theorem~2]{DereichScheutzowSchottstedt13} to~$p \le d$ (with some modifications when~$p = d$) for~$\mu$ in a certain class of radially symmetric and rapidly decaying probability laws, including, e.g.,~the normal distribution.
	
As noted in the introduction, it remains unknown whether any of the hidden constants appearing in~\eqref{eq:thm:DereichScheutzowSchottstedt} and in~\cite[Theorem~1.6]{CagliotiGoldmanPieroniTrevisan24} coincides with~$q_{p,d}$ or~$\eqc{p}{d}$.

	With our \Cref{thm:main1} and \Cref{thm:nonasy}, we obtain several improvements over what was previously known:
	\begin{itemize}
		\item We establish the speed of convergence~$\eqe{p}{n}(\mu) \asymp_{p,d,\mu} n^{-1/d}$ for general (not purely singular) measures in the whole range~$p \in [1,d)$, under a moment condition, but without assuming compactness of the support or Riemann integrability of the density.
		\item We prove a nonasymptotic upper bound also for~$d/2 \le p < d$ without assuming compactness of the support.
		\item We find the \emph{same explicit} dependence on the measure in the asymptotic upper and lower bounds~\eqref{eq:mainbelow} and~\eqref{eq:mainabove} (assuming~$\mu^a(\supp (\mu^s))=0$, where~$\mu^a$ and~$\mu^s$ are the absolutely continuous and singular parts of~$\mu$, respectively).
		\item We establish the asymptotic upper bound with the constant~$\eqc{p}{d}$, which is optimal, since~\eqref{eq:mainabove} is an equality for~$\mu = U_d$.
	\end{itemize}
	Furthermore, we determine the existence of the limit in some instances (\Cref{sec:existenceLimit}) and we find the speed of convergence~$\eqe{p}{n}(\mu) \asymp_{p,d,\mu} n^{-1/d}$ for every~$p \ge 1$ for a certain class of measures (\Cref{cor:pGd2}).

\section{Nonasymptotic upper bound (\Cref{thm:nonasy})} \label{sec:nonasy}

The proof of \Cref{thm:nonasy} is similar to those of its counterpart for compactly supported measures in \cite{MerigotMirebeau16,Chevallier18} (\Cref{thm:MerigotMirebeauChevallier}). Iteratively $n$ times, we extract from the given measure~$\mu$ a sufficiently concentrated subprobability with mass equal to~$1/n$. In~\cite{MerigotMirebeau16,Chevallier18}, where~$\mu$ is compactly supported, the subprobabilities are found by splitting the support into a finite number of pieces (of small, comparable size) and applying a pigeonhole-like principle: the measure of at least one of these pieces is sufficiently large. Since our measure is not compactly supported, we use the moment condition to first identify, at each iteration, a compact region (small relative to the moment of~$\mu$) where enough mass is concentrated; we then proceed as before.

\begin{proof}[Proof of~\Cref{thm:nonasy}]
	Fix~$n \in \N_1$, let~$M \coloneqq \int \norm{x}^{\theta} \, \dif \mu$,	and define
	\[
	r_k \coloneqq \left( \frac{2nM}{n-k} \right)^\frac{1}{\theta} \comma \qquad k \in \set{1,2,\dots,n-1} \fstop
	\]
	With this choice, for every~$k$ we have
	\begin{align} \label{eq:thm:nonasy:1}
		\begin{split}
			\mu\bigl( [-r_k,r_k]^d \bigr) &= 1 - \int_{\R^d \setminus [-r_k,r_k]^d} \, \dif \mu \ge 1-r_k^{-\theta} \int_{\R^d \setminus [-r_k,r_k]^d} \norm{x}^{\theta} \dif \mu \\
			&\ge 1- \frac{n-k}{2nM} M = \frac{1}{2} + \frac{k}{2n} \fstop
		\end{split}
	\end{align}
	Using \cite[Lemma~1]{Chevallier18}, we now argue as in the proof of \cite[Theorem~2]{Chevallier18}. By applying this lemma to the measure~$\nu_1 \coloneqq \mu|_{[-r_1,r_1]^d}$, we find a measure~$\eta_1 \le \nu_1$ with total mass~$\normTV{\eta_1} = 1/n$ and supported on a set with diameter bounded by~$4\sqrt{d}\,r_1(n\normTV{\nu_1})^{-1/d}$. We repeat this procedure with~$\nu_2 \coloneqq \mu|_{[-r_2,r_2]^d} - \eta_1$ to find~$\eta_2$, then with~$\nu_3 \coloneqq \mu|_{[-r_3,r_3]^d} - \eta_1 - \eta_2$, and so on. At the end, we have a family of measures~$\eta_1,\eta_2,\dots,\eta_{n-1}$ with
	\[
	\eta_1+\eta_2+\cdots+\eta_{n-1} \le \mu \quad \text{and} \quad \normTV{\eta_1}=\normTV{\eta_2} = \cdots = \normTV{\eta_{n-1}} = \frac{1}{n} \comma
	\]
	and
	\begin{align} \label{eq:thm:nonasy:2}
		\begin{split}
			\diam (\supp (\eta_k)) &\lesssim_d r_k(n \normTV{\nu_k})^{-1/d} = r_k\left(n \mu \bigl( [-r_k,r_k]^d \bigr) - (k-1) \right)^{-1/d} \\
			&\stackrel{\eqref{eq:thm:nonasy:1}}{\le} r_k \left( \frac{n-k}{2} \right)^{-1/d}
		\end{split}
	\end{align}
	for every~$k \in \set{1,2,\dots,n-1}$.
	
	Let us pick a point~$x_k$ from each~$\supp (\eta_k)$. After defining~$\eta_n \coloneqq \mu - \eta_1 - \cdots \eta_{n-1}$, \Cref{lemma:subadditivityWasserstein} gives
	\begin{align} \label{eq:thm:nonasy:3}
		\begin{split}
			\eqe{p}{n}^p(\mu) &\le W_p^p(\eta_1+\cdots+\eta_{n-1}+\eta_n, n^{-1} \bigl(\delta_{x_1}+\cdots+\delta_{x_{n-1}} + \delta_0)\bigr) \\
			&\le W_p^p(\eta_n, n^{-1} \delta_0) + \sum_{k=1}^{n-1} W_p^p(\eta_k, n^{-1} \delta_{x_k}) \\
			&\le \int \norm{x}^p \, \dif \eta_n + \sum_{k=1}^{n-1} \frac{\bigl(\diam (\supp (\eta_k))\bigr)^p}{n} \fstop
		\end{split}
	\end{align}
	H\"older's inequality and the fact that~$\theta > p^*$ yield
	\begin{equation} \label{eq:thm:nonasy:4}
		\int \norm{x}^p \, \dif \eta_n \le M^{p/\theta} \normTV{\eta_n}^{1-\frac{p}{\theta}} \le M^{p/\theta} n^{\frac{p}{\theta}-1} \le M^{p/\theta} n^{-p/d} \fstop
	\end{equation}
	Moreover,
	\begin{align} \label{eq:thm:nonasy:5}
		\begin{split}
			\sum_{k=1}^{n-1} \frac{\bigl(\diam (\supp (\eta_k))\bigr)^p}{n} &\stackrel{\eqref{eq:thm:nonasy:2}}{\lesssim_{p,d,\theta}} M^{p/\theta} n^{\frac{p}{\theta}-1} \sum_{k=1}^{n-1} (n-k)^{-\frac{p}{\theta} - \frac{p}{d}} \lesssim_{p,d,\theta} M^{p/\theta} n^{-p/d} \comma
		\end{split}
	\end{align}
	where, in the last inequality, we used that~$\frac{p}{\theta}+\frac{p}{d} < 1$.
	We conclude by combining~\eqref{eq:thm:nonasy:3},~\eqref{eq:thm:nonasy:4}, and~\eqref{eq:thm:nonasy:5}.
\end{proof}

\section{Uniform measure on a cube} \label{sec:cube}

In this section, we establish the limiting behavior of the optimal empirical quantization error for the uniform measure~$U_d$ on~$[0,1]^d$.

\begin{proposition} \label{prop:uniformCube}
	For every~$p \ge 1$, we have the identity
	\begin{equation} \label{eq:uniformCube}
		\lim_{n \to \infty} n^{1/d} \eqe{p}{n}(U_d) = \inf_{n \in  \N_1} n^{1/d} \eqe{p}{n}(U_d) \eqqcolon \eqc{p}{d} \fstop
	\end{equation}
\end{proposition}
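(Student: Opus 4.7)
Write $b_n := n^{1/d}\eqe{p}{n}(U_d)$, so that the statement becomes $\lim_n b_n = \inf_n b_n = \eqc{p}{d}$. The plan rests on the subcube subadditivity
\[
b_{m k^d} \le b_m \qquad \text{for every } m,k \in \N_1.
\]
To establish this, partition $[0,1]^d$ into $k^d$ congruent sub-cubes of side $1/k$ and quantize each by a scaled, translated copy of an optimal $m$-point quantizer of $U_d$; every piece then has the same mass-to-points ratio $1/(m k^d)$, so \Cref{rmk:subaddErr} combined with the scaling of \Cref{rmk:scaling} gives $\eqe{p}{m k^d}^p(U_d) \le k^d\cdot k^{-(p+d)}\eqe{p}{m}^p(U_d) = k^{-p}\eqe{p}{m}^p(U_d)$, i.e.\ $b_{m k^d}\le b_m$. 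From this, $\liminf_n b_n = \eqc{p}{d}$ is immediate: given $\epsilon>0$, choose $m_0$ with $b_{m_0}<\eqc{p}{d}+\epsilon$; then $b_{m_0 k^d}\le b_{m_0}$ for every $k$, so $\liminf_n b_n\le \eqc{p}{d}+\epsilon$, while $\liminf b_n\ge \inf b_n$ is trivial.

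For the matching $\limsup_n b_n \le \eqc{p}{d}$ I construct, for each large $n$, a competitor in $\ProRd{(n)}$ whose normalized error is close to $b_{m_0}$. Fix $m_0$ and set $k := \lfloor (n/m_0)^{1/d}\rfloor - 1$ and $r := n - m_0 k^d$; by construction $r = O_{m_0,d}\bigl(n^{(d-1)/d}\bigr)$ and the boundary-layer thickness $w := 1 - k(m_0/n)^{1/d}$ is of order $(m_0/n)^{1/d}\asymp n^{-1/d}$ (the $-1$ shift is what prevents $w$ from collapsing to $0$). Decompose $[0,1]^d = A\sqcup B$ with $A := [0,k(m_0/n)^{1/d}]^d$, a cube of volume $(n-r)/n$, and $B$ its L-shaped complement of volume $r/n$. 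Split $A$ into $k^d$ sub-cubes of side $(m_0/n)^{1/d}$, each carrying the scaled optimal $m_0$-quantizer, and split $B$ slab-by-slab into $r$ near-cubic cells of volume $1/n$ and diameter $O_{m_0,d}(n^{-1/d})$, each carrying a single atom. Since the mass-to-points ratio equals the constant $1/n$ on every piece, iterating \Cref{rmk:subaddErr} and invoking \Cref{rmk:scaling} gives
\[
\eqe{p}{n}^p(U_d) \le (1-r/n)\,n^{-p/d}\, b_{m_0}^p + O_{m_0,d}\bigl(n^{-(p+1)/d}\bigr),
\]
the first term being the scaled main quantization and the second coming from summing $r$ cell errors, each bounded by $(\mathrm{diam})^p\cdot(1/n)$. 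Multiplying by $n^{p/d}$ and letting $n\to\infty$ yields $\limsup_n b_n^p \le b_{m_0}^p$; infimizing over $m_0$ concludes $\lim_n b_n = \eqc{p}{d}$.

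\emph{Main obstacle.} The only delicate point is partitioning $B$ into $r$ cells all of diameter $O(n^{-1/d})$. The shift $k\mapsto k-1$ ensures that the thinnest dimension of $B$ is of order $n^{-1/d}$, so a direct axial subdivision within each of the $d$ rectangular slabs composing $B$ produces near-cubic cells whose longest side is also $O(n^{-1/d})$. Minor integer-rounding issues arising from forcing each cell to have volume exactly $1/n$ can be absorbed by small local adjustments of individual cells, without degrading the diameter bound.
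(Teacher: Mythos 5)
Your proof takes a genuinely different route from the paper's. The paper proceeds by \emph{induction on the dimension}: the base case $d=1$ is computed explicitly, and the inductive step rests on \Cref{lemma:errorInductiveStep}, which constructs a competitor coupling for $U_{d+1}$ by rescaling (in the last coordinate) a coupling for $U_{d+1}$ with fewer points and then filling the remaining thin slab $[0,1]^d\times\bigl(\tfrac{n}{n+l},1\bigr]$ with a coupling for $U_d$ lifted into the slab and collapsed onto the hyperplane $t=1$. This yields the three-term bound $\eqeCube{p}{n+l}{d+1}^p \le \tfrac{n}{n+l}\eqeCube{p}{n}{d+1}^p + c_p\tfrac{l}{n+l}\eqeCube{p}{l}{d}^p + c_p(\tfrac{l}{n+l})^{p+1}$, and the inductive hypothesis controls the middle term. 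Your argument is non-inductive: you tile a sub-cube $A$ by $k^d$ scaled $m_0$-point quantizers and assign one atom to each of $r$ cells in the L-shaped remainder $B$. Your scaling argument for the $\liminf$ matches \Cref{lemma:scaling} exactly, your identification of the leading term agrees with the paper, and the orders of magnitude ($w\asymp n^{-1/d}$, $r\asymp n^{(d-1)/d}$, remainder $O(n^{-(p+1)/d})$) are all correct.

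However, the step you describe as the ``main obstacle'' and then set aside as ``minor integer-rounding issues'' is a genuine gap, and it is precisely the difficulty the paper's inductive structure is designed to avoid. To apply \Cref{rmk:subaddErr} the way you propose, you must produce $r$ Borel pieces of $B$ with volume \emph{exactly} $1/n$ each (equal masses are forced, since each piece receives exactly one atom of mass $1/n$). Decomposing $B$ slab-by-slab does not immediately give this: writing $B=\bigsqcup_{j=1}^d S_j$ with each $S_j$ a box of thickness $w$, the quantity $n\lvert S_j\rvert = nw(1-w)^{j-1}$ is generically not an integer, so no axial subdivision of a single slab can consist of cells of volume $1/n$. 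You would need a carrying scheme that transfers $O(1/n)$ of mass between consecutive slabs by perturbing the interface hyperplanes by $\Theta(1/(nw))\asymp n^{-(d-1)/d}$ (which happens to be $\le n^{-1/d}$ for $d\ge2$, but this must be checked, and the perturbation of the interface $\{x_j=1-w\}$ propagates as a modified passive constraint into all slabs $S_{j'}$ with $j'>j$), followed by a recursive equal-volume, small-diameter decomposition of each adjusted box. None of this is impossible, but it is a construction in its own right, and asserting it can be ``absorbed by small local adjustments'' without exhibiting the adjustment is not a proof.

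The structural payoff of the paper's approach is that it never needs such a geometric partition. It works at the level of transport plans, not domain decompositions: the $(d-1)$-dimensional quantizer from the inductive hypothesis provides a ready-made near-optimal plan for the thin remainder slab, and the integer-volume constraint on cells simply never arises. If you want to keep your direct, non-inductive construction, the equal-volume partition of $B$ needs to be built in full; otherwise the argument as written has a hole exactly where you flagged it.
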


Note that this proposition applies also when~$p \ge d$.

\Cref{prop:uniformCube} is easy to prove in dimension~$d=1$, see \Cref{ex:1dim} below. Moreover, exploiting the self-similarity of the cube, we can build a simple ``scale-and-copy'' argument (\Cref{lemma:scaling}, inspired by~\cite[Step 1 in Theorem~6.2]{GrafLuschgy00}, see also \Cref{fig:scaleCopy}) to write
\[
	\eqc{p}{d} = \inf_{m \in \N_1} \limsup_{k \to \infty} k m^{1/d} \eqe{p}{k^dm}(U_d) \fstop
\]
In order to prove that
\[
	\limsup_{n \to \infty} n^{1/d} \eqe{p}{n}(U_d) \le \inf_{m \in \N_1} \limsup_{k \to \infty} k m^{1/d} \eqe{p}{k^dm}(U_d) \comma
\]
we estimate the increase rate of the function~$n \mapsto \eqe{p}{n}(U_d)$: given~$n$ and~$m$, we want to know how far~$n^{1/d}\eqe{p}{n}(U_d)$ is from the sequence~$k \mapsto k m^{1/d} \eqe{p}{k^dm}(U_d)$. The bound on the increase rate is proven in \Cref{lemma:errorInductiveStep} by constructing a suitable competitor for the minimization problem that defines~$\eqe{p}{n}(U_d)$. This competitor is built by combining two optimal empirical quantizers: one for~$U_d$ and one for the uniform measure~$U_{d-1}$ on the~$(d-1)$-dimensional (!) cube. In the end, this procedure shifts the problem to estimating the optimal empirical quantization error for the uniform measure on a \emph{lower dimensional} cube. In fact,~\eqref{eq:uniformCube} is proven \emph{by induction} on the dimension.

\begin{remark}
	While it is obvious that~$n \mapsto e_{p,n}(\mu)$ is nonincreasing, the same cannot in general be said for the optimal empirical quantization error. For example, if~$\mu = \frac{\delta_x + \delta_y}{2}$ for two distinct points~$x,y \in \R^d$, then~$\eqe{p}{2}(\mu) = 0$ but~$\eqe{p}{3}(\mu) > 0$.
\end{remark}

\begin{remark}[$1$-dimensional case] \label{ex:1dim}
	The values of~$e_{p,n}(U_1)$ and~$\eqe{p}{n}(U_1)$ are easy to compute and coincide. For both the problems, the optimal measure~$\mu_n \in \ProRd{(n)}$ and the optimal transport plan~$\gamma \in \Gamma(U_1, \mu_n)$ are simply:
	\[
	\mu_n \coloneqq \frac{1}{n} \sum_{i=1}^n \delta_{\frac{2i-1}{2n}} \comma \quad \gamma \coloneqq \sum_{i=1}^n \mathscr{L}^1|_{\left(\frac{i-1}{n},\frac{i}{n}\right)} \otimes \delta_{\frac{2i-1}{2n}} \in \Gamma(U_1,\mu_n) \comma
	\]
	see~\cite[Theorem~4.16]{GrafLuschgy00},~\cite[Theorem~5.5]{XuBerger19}, and \Cref{fig:1dim}.
Hence,
	\[
	e_{p,n}^p(U_1) = \eqe{p}{n}^p(U_1) = \sum_{i=1}^n \int_{\frac{i-1}{n}}^\frac{i}{n} \abs{x-\frac{2i-1}{n}}^p \, \dif x = \frac{1}{(p+1)(2n)^p} \fstop
	\]
	\begin{figure}
		\centering
		\includegraphics[width = .5\textwidth]{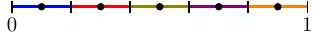}
		\caption{The points on which the optimal measure~$\mu_n$ for~$U_1$ is concentrated are evenly separated on~$[0,1]$.}
		\label{fig:1dim}
	\end{figure}
\end{remark}

\begin{proof}[Proof of~\Cref{prop:uniformCube}]
For simplicity, we write~$\eqeCube{p}{n}{d}$ in place of~$\eqe{p}{n}(U_d)$. The proof is by induction on the dimension~$d$. Base step: By \Cref{ex:1dim},~$n \eqeCube{p}{n}{1}$ is constantly equal to~$\frac{1}{2\sqrt[p]{p+1}}$.

For the inductive step, we make use of two lemmas. %

\begin{lemma} \label{lemma:scaling}
	For every~$m,k \in \N_1$, we have the inequality
	\begin{equation}
		\eqeCube{p}{k^d n}{d} \le \frac{1}{k} \eqeCube{p}{n}{d} \fstop
	\end{equation}
\end{lemma}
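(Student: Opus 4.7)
The plan is a ``scale-and-copy'' construction that exploits the self-similarity of the unit cube. I would first fix an optimal empirical quantizer $\mu_n = \frac{1}{n}\sum_{i=1}^n \delta_{x_i}$ for $U_d$ (existence by \Cref{lemma:existence}), tile $[0,1]^d$ into the $k^d$ subcubes $C_j \coloneqq \frac{j}{k} + [0, 1/k]^d$ indexed by $j \in \{0, 1, \dots, k-1\}^d$, and on each $C_j$ place the scaled and translated copy of $\mu_n$ through the affine map $T_j(x) \coloneqq (j+x)/k$. Concatenating the $k^d$ copies would give an atomic measure with exactly $k^d n$ atoms and total mass~$1$, hence a legitimate competitor for $\eqeCube{p}{k^d n}{d}$.

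The first key identity is $U_d|_{C_j} = (T_j)_\#(k^{-d} U_d)$, which comes from a direct change of variables. Combining the affine scaling in~\Cref{rmk:scaling} (pushforward by $x \mapsto v + \lambda x$ multiplies the quantization error by $|\lambda|$) with the $1/p$-homogeneity of $\eqe{p}{n}(\cdot)$, I then obtain
\[
	\eqe{p}{n}\bigl(U_d|_{C_j}\bigr) \;=\; \frac{1}{k}\, \eqe{p}{n}\bigl(k^{-d} U_d\bigr) \;=\; k^{-1 - d/p}\, \eqeCube{p}{n}{d} \fstop
\]
The second step is to assemble these pieces: applying the subadditivity of $W_p^p$ (\Cref{lemma:subadditivityWasserstein}) iteratively to the disjoint decomposition $U_d = \sum_j U_d|_{C_j}$ and to the sum of the corresponding near-optimal quantizers on each $C_j$ yields
\[
	\eqeCube{p}{k^d n}{d}^p \;\le\; \sum_{j} \eqe{p}{n}\bigl(U_d|_{C_j}\bigr)^p \;=\; k^d \cdot k^{-p-d}\, \eqeCube{p}{n}{d}^p \;=\; k^{-p}\, \eqeCube{p}{n}{d}^p \fstop
\]
Taking $p^{\text{th}}$ roots gives the claim.

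I do not foresee any real obstacle here: this lemma is a routine scaling argument once the right homogeneity identities are in place. The only point requiring a little care is the mass bookkeeping, ensuring that each piece receives the right number of atoms with the right weight so that the concatenated measure lies in $\MeasRd{(k^d n)}$ with total mass equal to $\normTV{U_d} = 1$; since each $U_d|_{C_j}$ has mass $1/k^d$ and receives $n$ atoms of weight $1/(k^d n)$, this balance holds automatically, and the subadditivity of \Cref{lemma:subadditivityWasserstein} applies at each iteration.
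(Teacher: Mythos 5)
Your proof is correct and rests on the same scale-and-copy construction as the paper's: tile $[0,1]^d$ into $k^d$ subcubes via the affine maps $T_j(x)=(j+x)/k$ and place a rescaled copy of a near-optimal quantizer on each. The only difference is presentational—the paper builds the coupling $\gamma' = \frac{1}{k^d}\sum_j (T_j,T_j)_\#\gamma$ explicitly and computes its cost, whereas you factor the same computation through \Cref{rmk:scaling}, the $1/p$-homogeneity, and the subadditivity in \Cref{rmk:subaddErr}, which is an equally valid and slightly more modular way to reach the same bound.
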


\begin{proof}
	Let~$\mu_m \in \ProRd{(m)}$ and~$\gamma \in \Gamma(U_d, \mu_m)$. For every~$i \in \set{0,1,\dots,k-1}^d$, we define
	\[
		T_i(x) \coloneqq \frac{1}{k}(i+x) \comma \qquad x \in [0,1]^d \fstop
	\]
	Notice that~$T_i$ maps~$[0,1]^d$ to~$i/k+[0,1/k]^d$. The idea is to use these transformations to make smaller copies of~$\mu_n$, which, together, constitute an appropriate competitor for the infimum that defines~$e_{p,k^dm,d}$. Precisely, we set
	\begin{align*}
		\mu_{k^d m}' &\coloneqq \frac{1}{k^d} \sum_{i} (T_i)_\# \mu_m \in \ProRd{(k^dm)} \comma \\
		\gamma' &\coloneqq \frac{1}{k^d} \sum_{i} (T_i,T_i)_\# \gamma \in \Gamma(U_d,\mu_{k^dm}') \fstop
	\end{align*}
	With these choices, we obtain
	\begin{align*}
		\eqeCube{p}{k^dm}{d}^p &\le \int \norm{x-y}^p \, \dif \gamma' = \frac{1}{k^d} \sum_{i} \int \abs{T_i(x)-T_i(y)}^p \, \dif \gamma \\ &=  \frac{1}{k^{p+d}} \sum_{i} \int \norm{x-y}^p \, \dif \gamma = \frac{1}{k^p} \int \norm{x-y}^p \, \dif \gamma \fstop
	\end{align*}
	We conclude by arbitrariness of~$\gamma$ and~$\mu_m$.
\end{proof}

\begin{figure}
	\centering
	\includegraphics[width=0.9\textwidth]{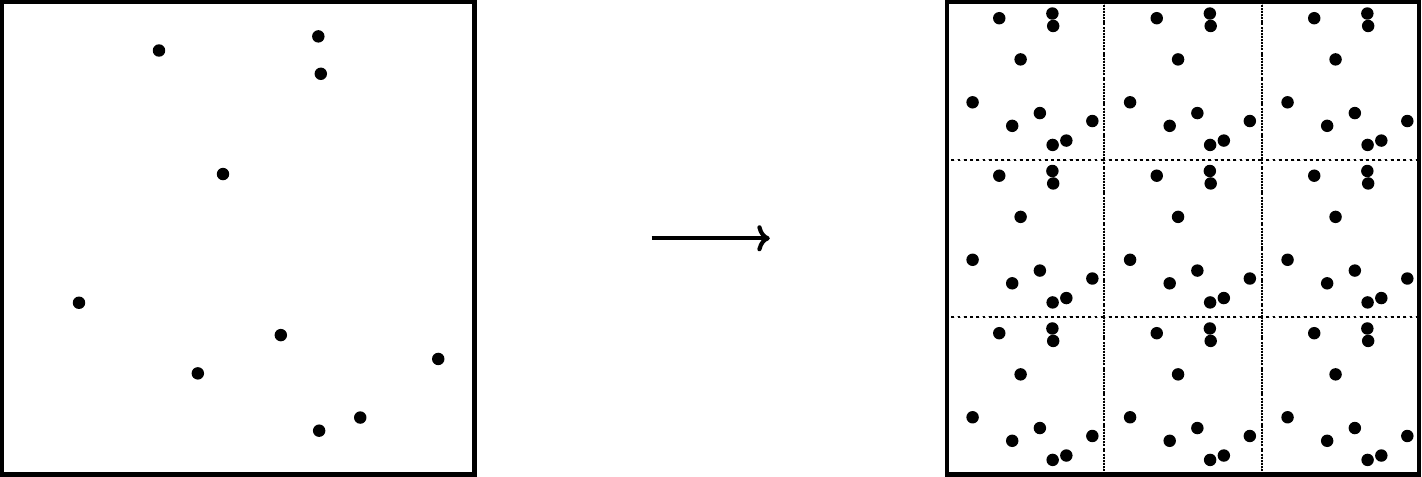}
	\caption{Idea for \Cref{lemma:scaling}. Given the measure~$\mu_n$, concentrated on the black dots in the left square, the competitor~$\mu_{k^dn}'$ is built by making~$k^d$ rescaled copies of~$\mu_n$.}
	\label{fig:scaleCopy}
\end{figure}

\begin{lemma} \label{lemma:errorInductiveStep}
	There exists a constant~$c_{p} > 0$ such that, for every~$n,l \in \N_1$, we have
	\begin{equation}
		\eqeCube{p}{n+l}{d+1}^p \le \frac{n}{n+l} \eqeCube{p}{n}{d+1}^p + c_{p} \frac{l}{n+l} \eqeCube{p}{l}{d}^p + c_{p} \left(\frac{l}{n+l}\right)^{p+1} \fstop
	\end{equation}
\end{lemma}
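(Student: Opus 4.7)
The idea is to build an explicit competitor $\mu_{n+l} \in \ProRd{(n+l)}$ for $\eqeCube{p}{n+l}{d+1}$ by splitting $[0,1]^{d+1}$ into a ``thick box'' of mass $\tfrac{n}{n+l}$ (handled by a rescaled optimal $(d+1)$-dimensional quantizer with $n$ atoms) and a thin slab of mass $\tfrac{l}{n+l}$ (handled by an optimal $d$-dimensional quantizer with $l$ atoms, lifted to the midplane of the slab). Set $\alpha \coloneqq n/(n+l)$ and cut the cube along the last coordinate as
\[
 B \coloneqq [0,1]^d \times [0,\alpha], \qquad S \coloneqq [0,1]^d \times [\alpha,1],
\]
so that $U_{d+1}|_B$ and $U_{d+1}|_S$ have masses $\alpha$ and $1-\alpha = l/(n+l)$ respectively.

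On $B$, let $\tilde \mu_n \in \ProRd{(n)}$ achieve $\eqeCube{p}{n}{d+1} = W_p(U_{d+1}, \tilde \mu_n)$ (by \Cref{lemma:existence}), and consider the map $F(x_1,\dots,x_{d+1}) \coloneqq (x_1,\dots,x_d,\alpha x_{d+1})$, which sends $[0,1]^{d+1}$ onto $B$ and, since $\alpha \le 1$, is $1$-Lipschitz for the Euclidean norm. The measure $\alpha F_\# \tilde \mu_n$ has $n$ atoms each of mass $\alpha/n = 1/(n+l)$, and $\alpha F_\# U_{d+1} = U_{d+1}|_B$. The scaling identity $W_p^p(\lambda \mu, \lambda \nu) = \lambda W_p^p(\mu,\nu)$ together with the standard contraction bound under $1$-Lipschitz pushforwards yields
\[
 W_p^p(U_{d+1}|_B,\, \alpha F_\# \tilde \mu_n) \;\le\; \alpha\, W_p^p(U_{d+1}, \tilde \mu_n) \;=\; \tfrac{n}{n+l}\, \eqeCube{p}{n}{d+1}^p.
\]

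On $S$, factor $U_{d+1}|_S = U_d \otimes \LebRd|_{[\alpha,1]}$ and let $\tilde \mu_l^{(d)} \in \ProRd{(l)}$ achieve $\eqeCube{p}{l}{d} = W_p(U_d, \tilde \mu_l^{(d)})$. I would embed it at height $h \coloneqq (\alpha+1)/2$ by setting $\hat \mu_l \coloneqq \tilde \mu_l^{(d)} \otimes \delta_h$, and use as competitor on $S$ the measure $\tfrac{l}{n+l}\hat \mu_l$, which has $l$ atoms each of mass $1/(n+l)$. Tensoring an optimal plan $\gamma^{(d)} \in \Gamma(U_d, \tilde \mu_l^{(d)})$ with $\LebRd|_{[\alpha,1]} \otimes \delta_h$ gives an admissible plan between $U_{d+1}|_S$ and $\tfrac{l}{n+l}\hat \mu_l$, whose cost splits via the elementary bound $\|(a,b)\|^p \le c_p(\|a\|^p + |b|^p)$ (valid for every $p \ge 1$). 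The horizontal part contributes $c_p \cdot \tfrac{l}{n+l}\, \eqeCube{p}{l}{d}^p$, and the vertical part contributes $c_p \cdot \tfrac{l}{n+l} \int_\alpha^1 \tfrac{1}{1-\alpha}|x_{d+1}-h|^p \, \dif x_{d+1} \le c_p' \bigl(\tfrac{l}{n+l}\bigr)^{p+1}$, using that the centered absolute moment of the uniform measure on an interval of length $1-\alpha$ is of order $(1-\alpha)^p$.

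Finally, since $U_{d+1} = U_{d+1}|_B + U_{d+1}|_S$ and the two candidate measures have matching total masses with the respective pieces, \Cref{lemma:subadditivityWasserstein} combines the two estimates into the stated inequality, after absorbing all numerical constants into a single $c_p$. The candidate sum $\alpha F_\# \tilde \mu_n + \tfrac{l}{n+l}\hat \mu_l$ is a valid element of $\ProRd{(n+l)}$ because every one of its $n+l$ atoms has mass exactly $1/(n+l)$. The only mildly delicate point is the cost splitting on the slab: one must verify that the vertical contribution genuinely scales like $(l/(n+l))^{p+1}$ rather than like $(l/(n+l))^p$, which comes from the extra factor of mass $l/(n+l)$ in front of the already $p$-th power small deviation $|x_{d+1}-h|^p \lesssim (1-\alpha)^p$.
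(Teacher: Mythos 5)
Your proposal is correct and follows essentially the same route as the paper: both shrink an optimal $(d+1)$-dimensional $n$-point quantizer by the factor $\tfrac{n}{n+l}$ in the last coordinate (a $1$-Lipschitz map, so the cost only gains the mass factor $\tfrac{n}{n+l}$), fill the remaining slab of mass $\tfrac{l}{n+l}$ with a lifted optimal $d$-dimensional $l$-point quantizer tensorized with the vertical direction, and split the slab cost via $\norm{(a,b)}^p \le c_p(\norm{a}^p+\abs{b}^p)$ into the horizontal term $c_p\tfrac{l}{n+l}\eqeCube{p}{l}{d}^p$ and a vertical term of order $\bigl(\tfrac{l}{n+l}\bigr)^{p+1}$. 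The only immaterial differences are that you place the slab atoms at the midplane rather than on the top face and phrase the argument through explicit competitor measures and \Cref{lemma:subadditivityWasserstein} instead of assembling the transport plan directly.
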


\begin{proof}
	Let~$\mu_n, \nu_l$ be probability measures of the form
	\[
		\mu_n = \frac{1}{n} \sum_{i=1}^n \delta_{(x_i, t_i)} \in \mathcal{P}_{(n)}(\R^{d+1}) \comma \qquad \nu_l = \frac{1}{l} \sum_{i=n+1}^{n+l} \delta_{x_i} \in \ProRd{(l)},
	\]
	for some~$x_1,\dotsc,x_n,x_{n+1},\dotsc,x_{n+l} \in \R^{d}$ and~$t_1,\dotsc,t_n \in \R$. Pick~$\gamma \in \Gamma(U_{d+1}, \mu_n)$ and~$\eta \in \Gamma(U_d,\nu_l)$. Consider the linear $1$-Lipschitz function~$T \colon \R^{d+1} \to \R^{d+1}$ given by the formula
	\[
		T(x,t) \coloneqq \left(x, \frac{n}{n+l} t \right) \comma \qquad x \in \R^d \comma \quad t \in \R \comma
	\]
	and define~$\gamma' \in \mathcal{P}(\R^{d+1} \times \R^{d+1})$ via
	\begin{equation*}
		\int \varphi \, \dif \gamma' \coloneqq \frac{n}{n+l} \int \varphi\bigl(T(x,t),T(x',t')\bigr) \, \dif \gamma + \int_{\frac{n}{n+l}}^1 \int \varphi(x,t,x',1) \, \dif \eta(x,x') \, \dif t \comma
	\end{equation*}
	for every continuous and bounded test function~$\varphi \colon \R^{d+1} \times \R^{d+1} \to \R$.
	\begin{figure}
		\centering
		\includegraphics[width=0.9\textwidth]{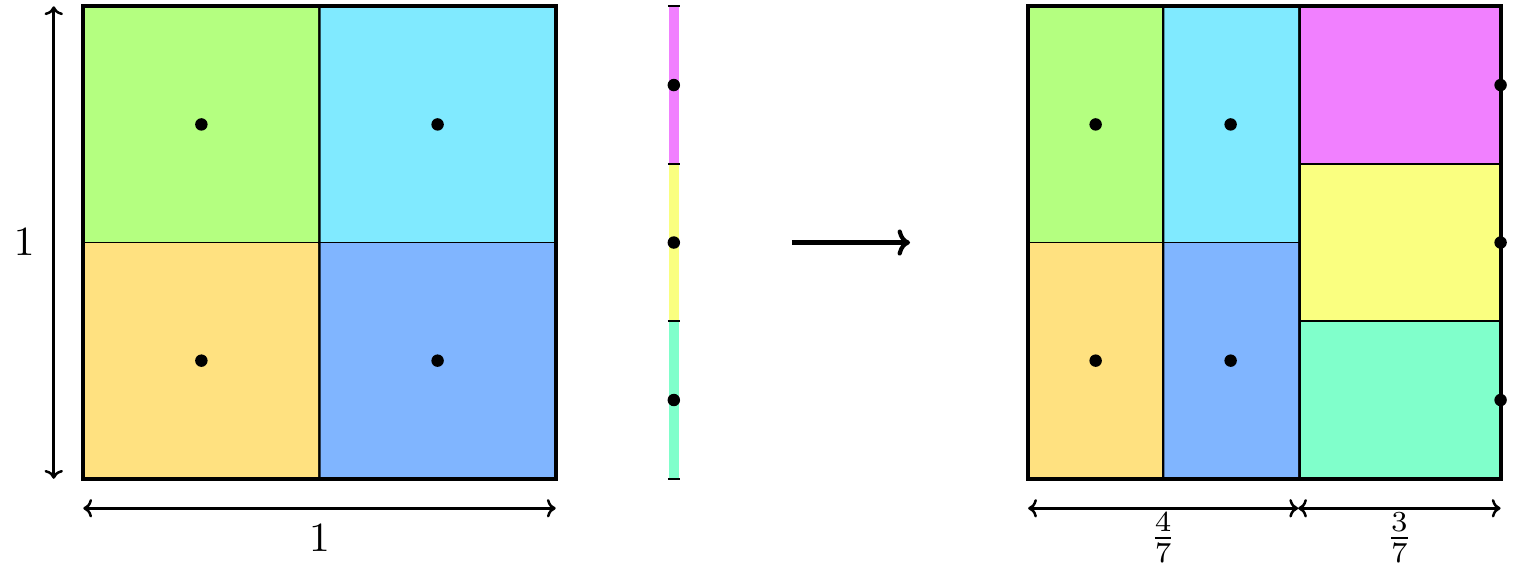}
		\caption{Idea for \Cref{lemma:errorInductiveStep} with~$d=1, n=4, l=3$. From a transport plan for~$U_{d+1}$ with~$n$ points and one for~$U_d$ with~$l$ points, we construct a new plan for~$U_{d+1}$ with~$n+l$ points by ``shrinking'' the first one and ``expanding'' the second one.}
	\end{figure}
	It is not difficult to check that~the first marginal of $\gamma'$ is~$U_{d+1}$. Indeed, given a test function~$\psi \colon \R^{d+1} \to \R$ and denoting by~$\pi^1 \colon \R^{d+1} \times \R^{d+1} \to \R^{d+1}$ the projection onto the first~$d+1$ coordinates, we have
	\begin{align*}
		\int \psi \, \dif \pi^1_\# \gamma' &= \frac{n}{n+l} \int \psi \bigl( T(x,t) \bigr) \, \dif \pi^1_\# \gamma + \int_{\frac{n}{n+l}}^1 \int \psi(x,t) \, \dif \pi^1_\# \eta(x) \, \dif t \\
		&= \frac{n}{n+l} \int_0^1 \int \psi\left(x,\frac{n t}{n+l}  \right) \, \dif U_d(x) \, \dif t + \int_{\frac{n}{n+l}}^1 \! \int \psi(x,t) \, \dif U_d(x) \, \dif t \\
		&= \int \psi \, \dif U_{d+1} \fstop
	\end{align*}
	The second marginal is
	\[
		\pi^2_\# \gamma' = \frac{1}{n+l} \sum_{i=1}^n \delta_{T(x_i,t_i)} + \frac{1}{n+l} \sum_{i = n+1}^{n+l} \delta_{(x_i,1)} \in \ProRd{(n+l)} \comma
	\]
	because
	\begin{align*}
		\int \psi \, \dif \pi^2_\# \gamma' &= \frac{n}{n+l} \int \psi\bigl( T(x,t) \bigr) \, \dif \pi^2_\# \gamma + \int_{\frac{n}{n+l}}^1 \int \psi(x',1) \, \dif \pi^2_\# \eta(x') \, \dif t \\
		&= \frac{\cancel{n}}{n+l} \frac{1}{\cancel{n}} \sum_{i=1}^n \psi\bigl( T(x_i,t_i) \bigr) + \frac{\cancel{l}}{n+l} \frac{1}{\cancel{l}} \sum_{i = n+1}^{n+l} \psi(x_i,1) \fstop
	\end{align*}
	We infer the inequality
	\begin{multline*}
		\eqeCube{p}{n+l}{d+1}^p \le  \frac{n}{n+l} \int \norm{T(x,t)-T(x',t')}^p \, \dif \gamma
		\\+ \int_{\frac{n}{n+l}}^1 \int \norm{(x,t)-(x',1)}^p \, \dif \eta(x,x') \, \dif t \fstop
	\end{multline*}
	Now we make the following two observations:
	\begin{enumerate}
		\item since~$T$ is~$1$-Lipschitz,~$\norm{T(x,t)-T(x',t')} \le \norm{(x,t)-(x',t')}$,
		\item there exists a constant~$c_{p}$ such that \[ \norm{(x,t)-(x',t')}^p \le c_{p} \left( \norm{x-x'}^p + \abs{t - t'}^p \right)\] for every~$x,x' \in \R^d$ and~$t, t' \in \R$. Precisely,~$c_p = \max\bigl(1,2^{\frac{p}{2}-1}\bigr)$.
	\end{enumerate}
	Therefore, we obtain
	\begin{multline*}
		\eqeCube{p}{n+l}{d+1}^p \le \frac{n}{n+l} \int \norm{(x,t) - (x', t')}^p \, \dif \gamma + c_p \frac{l}{n+l} \int \norm{x-x'}^p \, \dif \eta \\ + c_p \int_{\frac{n}{n+l}}^1 (1-t)^p \, \dif t \fstop
	\end{multline*}
	By arbitrariness of~$\mu_n,\nu_l,\gamma,\eta$,
	\[
		\eqeCube{p}{n+l}{d+1}^p \le \frac{n}{n+l} \eqeCube{p}{n}{d+1}^p + c_p \frac{l}{n+l} \eqeCube{p}{l}{d}^p + c_p \int_{\frac{n}{n+l}}^1 (1-t)^p \, \dif t \comma
	\]
	and the conclusion follows.
\end{proof}

Assume that~\eqref{eq:uniformCube} is true for a certain dimension~$d$, and fix~$m \in \N_1$. For every~$n \ge 2^{d+1} m$, set
\[
	k_n \coloneqq \left \lfloor \left ( \frac{n}{m} \right)^{\frac{1}{d+1}} \right \rfloor - 1 \comma \quad l_n \coloneqq n - k_n^{d+1} m \fstop
\]
Observe that~$k_n, l_n \ge 1$ for every~$n$ (they are integer and strictly positive), and that~$l_n \asymp_{d,m} n^{\frac{d}{d+1}}$. Indeed, on the one hand,
\[
	\frac{n}{m} \le \left( \left\lfloor \left ( \frac{n}{m} \right)^{\frac{1}{d+1}} \right \rfloor +1 \right)^{d+1} = (k_n+2)^{d+1} \comma
\]
from which we get
\[
	l_n \le \bigl( (k_n+2)^{d+1} - k_n^{d+1} \bigr) m \lesssim_d k_n^{d} m \le n^{\frac{d}{d+1}} m^\frac{1}{d+1} \fstop
\]
On the other hand,
\[
	l_n \ge \bigl((k_n+1)^{d+1}-k_n^{d+1} \bigr)m \gtrsim_d (k_n+2)^{d}m \ge n^{\frac{d}{d+1}} m^\frac{1}{d+1} \fstop
\]
\Cref{lemma:errorInductiveStep} gives the estimate
\[
	\eqeCube{p}{n}{d+1}^p - \eqeCube{p}{k_n^{d+1}m}{d+1}^p \lesssim_p \frac{l_n}{n} \eqeCube{p}{l_n}{d}^p + \left( \frac{l_n}{n} \right)^{p+1} \comma
\]
and, by inductive hypothesis,~$\eqeCube{p}{l_n}{d}^p \lesssim_{p,d} l_n^{-p/d}$. Thus,
\[
	\eqeCube{p}{n}{d+1}^p - \eqeCube{p}{k_n^{d+1} m}{d+1}^p \lesssim_{p,d} \frac{l_n^{1-\frac{p}{d}}}{n}+\left( \frac{l_n}{n} \right)^{p+1} \fstop
\]
The combination of the latter with~$l_n \asymp_{d,m} n^{\frac{d}{d+1}}$ gives
\[
	\limsup_{n \to \infty} n^{\frac{p}{d+1}} \eqeCube{p}{n}{d+1}^p \le \limsup_{n \to \infty} n^{\frac{p}{d+1}} \eqeCube{p}{k_n^{d+1} m}{d+1}^p \fstop
\]
Now we use \Cref{lemma:scaling} to write
\[
	\limsup_{n \to \infty} n^{\frac{1}{d+1}} \eqeCube{p}{n}{d+1} \le \eqeCube{p}{m}{d+1} \limsup_{n \to \infty} \frac{n^\frac{1}{d+1}}{k_n} = m^\frac{1}{d+1} \eqeCube{p}{m}{d+1} \fstop
\]
We conclude the inductive step (and therefore the proof) by arbitrariness of~$m$.
\end{proof}

\section{Asymptotic behavior for~$p \in [1,\infty)$ (\Cref{cor:pGd2})} \label{sec:corollaries}

This section is devoted to \Cref{cor:pGd2}. Note that this result will \emph{not} be used later in this work. The following simple observation is at the core of the proof.

\begin{remark} \label{cor:lipPF}
	The property
	\begin{equation} \label{eq:uppermu}
		\limsup_{n \to \infty}  n^{1/d} \eqe{p}{n}(\mu) < \infty 
	\end{equation}
	is invariant under pushforward via Lipschitz maps. In particular, if~$T \colon [0,1]^d \to \R^d$ is Lipschitz, then~\eqref{eq:uppermu} holds with~$\mu \coloneqq T_\# U_d$ for every~$p \ge 1$.
\end{remark}

\begin{proof}[Proof of \Cref{cor:pGd2}] \emph{Step 1 ($\Omega = \tilde \Omega$).} Assume at first that~$\Omega = \tilde \Omega$, i.e.,~$\Omega$ itself is convex and with~$C^{1,1}$ boundary. The idea is to use the regularity theory for optimal transport to find a Lipschitz map~$T$ such that~$\mu = T_\# U_d$ in order to apply \Cref{cor:lipPF}. Precisely, we use~\cite[Theorem~1.1]{ChenLiuWang21} (see also~\cite[Theorem~1.1~(i)]{ChenLiuWang19}): given a measure~$\mu_0 = \rho_0 \LebRd$ concentrated on an open set~$\Omega_0$, with the same assumptions as~$\rho$ and~$\Omega$, there exists a Lipschitz transport map\footnote{In fact, the map~$T_1$ is of class~$C^1$ with H\"older Jacobian. Since~$\Omega_0$ is convex,~$T_1$ is Lipschitz.}~$T_1$ pushing~$\mu_0$ to~$\mu$. If we manage to find one such~$\mu_0$ of the form~$\mu_0 = (T_0)_\# U_d$ for some~Lipschitz map $T_0$, then we can set~$T \coloneqq T_1 \circ T_0$ and conclude. The obstruction to simply taking~$\mu_0=U_d$ is that the boundary of~$[0,1]^d$ is not of class~$C^{1,1}$. Let us also note that it makes no difference if we find~$\mu_0$ as Lipschitz pushforward of the uniform measure on \emph{another} $d$-dimensional cube, such as the unit ball w.r.t.~$1$-norm~$\norm{\cdot}_1$.
	
	In light of the previous discussion, proving the following lemma suffices to complete this Step.
	\begin{lemma}
	The map%
		\[
			T_0(x) \coloneqq \left( 1 - \norm{x}_1 + \frac{\norm{x}_1^2}{\norm{x}_2} \right)x \comma \qquad \norm{x}_1 < 1 \comma
		\]
		is Lipschitz continuous. Moreover, the measure~$\mu_0 \coloneqq (T_0)_\# U\bigl( \set{\norm{\cdot}_1 < 1} \bigr)$ is concentrated on the Euclidean ball~$\Omega_0 \coloneqq \set{\norm{\cdot}_2 < 1}$ and, therein, it has Lipschitz continuous and uniformly positive density.
	\end{lemma}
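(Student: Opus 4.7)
The map $T_0$ is radial: writing $s = \|x\|_1$, $r = \|x\|_2$, and $\phi(x) = 1 - s + s^2/r$, we have $T_0(x) = \phi(x) x$. Since $\|\cdot\|_1 \ge \|\cdot\|_2$ we get $s^2/r \ge s$, so $\phi \ge 1$; and since $s \le \sqrt d \, r$, we get $s^2/r \le \sqrt d \, s$, so $\phi \le 1 + (\sqrt d - 1)s \le \sqrt d$. My plan is first to check that $T_0$ is a homeomorphism from the open $\ell^1$-ball onto the open Euclidean ball $\Omega_0$. Because $T_0$ is radial, it suffices to verify that along each ray $\{tu : t \ge 0\}$ with $\|u\|_2 = 1$ (so that $\|u\|_1 \ge 1$), the scalar function $f(t) := t\,\phi(tu) = t + t^2 \|u\|_1(\|u\|_1 - 1)$ is a homeomorphism of $[0, 1/\|u\|_1)$ onto $[0,1)$; this follows from $f(0)=0$, $f(1/\|u\|_1) = 1$, and $f'(t) \ge 1 > 0$.

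Next I would prove Lipschitz regularity of $T_0$. The piece $x(1 - \|x\|_1)$ is clearly Lipschitz on the closed $\ell^1$-ball. The piece $x \cdot h(x)$ with $h(x) := \|x\|_1^2/\|x\|_2$ is the product of the bounded (by $\sqrt d$) $1$-homogeneous function $h$ with $x$; a direct computation gives $\nabla h = (2s/r)\operatorname{sgn}(x) - (s^2/r^3)x$ almost everywhere, which by $0$-homogeneity is determined by its values on the unit Euclidean sphere, where $\|\nabla h\|_2 \lesssim d$. Since $\nabla h$ is bounded a.e.\ on the convex $\ell^1$-ball, $h$ is Lipschitz there, and hence so is $T_0$.

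For the density of $\mu_0$, I would apply the change-of-variables formula. Using the identity $\det(\phi I + x(\nabla\phi)^T) = \phi^{d-1}(\phi + \nabla\phi \cdot x)$ and the elementary computations $\operatorname{sgn}(x)\cdot x = s$, $(x/r)\cdot x = r$, one finds $\nabla\phi \cdot x = s^2/r - s$, so
\begin{equation*}
    \det DT_0(x) = \bigl(1 - s + s^2/r\bigr)^{d-1}\bigl(1 - 2s + 2s^2/r\bigr).
\end{equation*}
Both factors lie in $[1, \sqrt d^{\,d-1}(2\sqrt d - 1)]$ by the same estimates that bounded $\phi$ (note $1 - 2s + 2s^2/r = 1 + 2s(s/r - 1) \ge 1$). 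Therefore the density
\begin{equation*}
    \rho_0(y) = \frac{c_d}{\det DT_0\bigl(T_0^{-1}(y)\bigr)}, \qquad c_d := 1/\mathrm{vol}(\{\|\cdot\|_1 < 1\}),
\end{equation*}
is uniformly positive and bounded on $\Omega_0$.

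Finally, to upgrade boundedness to Lipschitz regularity of $\rho_0$, I would argue: $\det DT_0$ is a polynomial in $s$ and $s^2/r$, so it is Lipschitz in $x$ on the closed $\ell^1$-ball; and $T_0^{-1}$ is Lipschitz on $\Omega_0$ because by Sherman--Morrison
\begin{equation*}
    (DT_0)^{-1} = \frac{1}{\phi} I - \frac{x(\nabla\phi)^T}{\phi\,(\phi + \nabla\phi\cdot x)}
\end{equation*}
is bounded a.e., so path-integrating along line segments in the convex set $\Omega_0$ gives the Lipschitz bound for $T_0^{-1}$. Composing, $\rho_0 = c_d/(\det DT_0)\circ T_0^{-1}$ is Lipschitz on $\Omega_0$. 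The main technical annoyance is handling $\operatorname{sgn}(x)$, which is only defined a.e.; but this is enough, since both the Lipschitz conclusion for $T_0$ and $h$, and the boundedness of $(DT_0)^{-1}$, only require a.e.\ bounds on a convex domain.
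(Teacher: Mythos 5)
Your proposal is correct, but it proceeds along a genuinely different route than the paper. You compute the Jacobian determinant $\det DT_0(x) = \phi^{d-1}(\phi + \nabla\phi\cdot x) = (1-s+s^2/r)^{d-1}(1-2s+2s^2/r)$ via the rank-one-perturbation identity in Cartesian coordinates, then write $\rho_0 = c_d / (\det DT_0)\circ T_0^{-1}$ and argue that both $\det DT_0$ and $T_0^{-1}$ are Lipschitz. The paper instead uses polar coordinates, writes $T_0(rv) = \xi_v(r)v$, inverts the scalar quadratic $\xi_v$ explicitly, and thereby expresses the density \emph{directly in terms of} $y$ as $\rho_0(y) = \alpha(\beta(y))$ with $\beta(y) = \|y\|_1(\|y\|_1/\|y\|_2 - 1)$, so that Lipschitz regularity and positivity are read off from the explicit functions $\alpha,\beta$ without ever needing to show $T_0^{-1}$ is Lipschitz. (One can check the two determinant computations agree: with $s=\|x\|_1$, $r=\|x\|_2$ one has $\xi_v(r)=r\phi$ and $\partial_r\xi_v(r) = 1-2s+2s^2/r$, matching your formula.) The one place your argument is a bit loose is precisely the step the paper avoids: you invoke "path-integrating along line segments" to get Lipschitz-ness of $T_0^{-1}$ from a.e.\ boundedness of $(DT_0)^{-1}$, but absolute continuity of $T_0^{-1}$ along segments is not a priori known. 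This is fixable by observing that $T_0$ preserves each open orthant and is a $C^\infty$ diffeomorphism with uniformly bounded inverse Jacobian on each convex piece $B_1\cap O_\sigma \to \Omega_0\cap O_\sigma$, then patching the $2^d$ Lipschitz estimates using continuity; your note about $\operatorname{sgn}(x)$ being defined only a.e.\ is pointing in that direction, but it should be spelled out rather than treated as "only requiring a.e.\ bounds." The same orthant-wise care also justifies the change-of-variables formula for the pushforward density. Both approaches are valid; the paper's is more economical because the Lipschitz check is done on an explicit scalar composition, while yours has the minor advantage of giving the Jacobian determinant in a form that makes the uniform bounds transparent.
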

	\begin{proof}
	We omit the simple proofs that~$ T_0$ is Lipschitz and that~$\mu_0$ is concentrated on~$\Omega_0$, and focus on the computation of the density of~$\mu_0$. Let~$\varphi \colon \Omega_0 \to \R$ be a Borel measurable and bounded test function. We have
	\begin{align*}
		\int \varphi \, \dif \mu_0 = \frac{1}{c_d} \int_{\Sph^{d-1}} \int_0^{\norm{v}_1^{-1}} \! \! \! \varphi\bigl(  T_0(rv) \bigr) r^{d-1} \, \dif r \dif \mathscr{H}^{d-1}(v) \comma
	\end{align*}
	where~$c_d \coloneqq \abs{\set{\norm{\cdot}_1 < 1}}= \frac{2^d}{d!}$, the set~$\Sph^{d-1}$ is the~$(d-1)$-dimensional sphere (w.r.t.~the~$2$-norm), and~$\mathscr{H}^{d-1}$ is the~$(d-1)$-dimensional Hausdorff measure on it. Let us write
	\[
		T_0(rv) = \underbrace{r \left( 1 - r\norm{v}_1 + r \norm{v}_1^2  \right)}_{\eqqcolon \xi_v(r)} v \comma \qquad r \in \left(0, \norm{v}_1^{-1} \right) \comma \quad v \in \Sph^{d-1} \comma
	\]
	and notice that
	\[
		(\partial_r \xi_v)(r)  = 1 + 2r \norm{v}_1 \bigl( \norm{v}_1-1 \bigr) \ge 1 \comma
	\]
	where, in the last inequality, we used that~$\norm{\cdot}_2 \le \norm{\cdot}_1$. In particular,~$\xi_v$ is invertible. Thus, by changing variables, we find
	\[
		\int \varphi \, \dif \mu_0 = \frac{1}{c_d} \int_{\Sph^{d-1}} \int_0^1 \varphi(\tilde r v) \frac{\xi_v^{-1}(\tilde r)^{d-1}}{(\partial_r \xi_v)\bigl(  \xi_v^{-1}(\tilde  r) \bigr)} \dif \tilde r \, \dif \mathscr{H}^{d-1}(v) \comma
	\]
	and, therefore, the density of~$\mu_0$ on~$\Omega_0$ is
	\[
		\rho_0(x) \coloneqq \frac{\xi_{v_x}^{-1}\bigl(\norm{x}_2\bigr)^{d-1}}{c_d\norm{x}_2^{d-1} (\partial_r \xi_{v_x})\Bigl( \xi_{v_x}^{-1}\bigl(\norm{x}_2\bigr) \Bigr)} \comma \quad \text{where } v_x \coloneqq \frac{x}{\norm{x}_2} \comma \qquad \norm{x}_2<1 \fstop
	\]
	If we set
	\[
		\alpha(t) \coloneqq \begin{cases}
			\frac{1}{c_d} &\text{if } t=0 \comma \\
			\frac{1}{c_d \sqrt{1+4t}} \left(\frac{\sqrt{1+4 t}-1}{2t}\right)^{d-1} &\text{if } t > 0 \comma
		\end{cases} \quad \beta(x) \coloneqq \norm{x}_1 \left( \frac{\norm{x}_1}{\norm{x}_2} - 1\right) \comma
	\]
	tedious but simple computations (passing through the explicit formula for~$\xi_v^{-1}$) reveal that~$\rho_0|_{\Omega_0} = \alpha \circ \beta$.
	When~$\norm{x}_2 < 1$, the values of~$\beta(x)$ range between~$0$ and~$d-\sqrt{d}$. On this interval, the function~$\alpha$ is Lipschitz continuous and positive. Since~$\beta|_{\Omega_0}$ is Lipschitz too, the proof is complete.
	\end{proof}

\emph{Step 2 ($\Omega \ne \tilde \Omega$).} Let us now generalize to the case where, possibly,~$\Omega \ne \tilde \Omega$, but there exists~$M \colon \tilde \Omega \to \Omega$ as in the assumptions.
	Consider the probability measure~$\tilde \mu$ defined by
	\[
	\tilde \rho \coloneqq \begin{cases}
		( \rho \circ M) \abs{\det \nabla M} &\text{on } \tilde \Omega \comma \\
		0 &\text{on } \R^d \setminus \tilde \Omega \comma
	\end{cases}
	\quad
	\tilde \mu \coloneqq \tilde \rho \LebRd \fstop
	\]
	Thanks to the assumptions on~$M$ and~$\rho$, to this new measure we can apply Step~1; thus~\eqref{eq:uppermu} holds for~$\tilde \mu$. Moreover, by the change of variables formula,~$\mu = M_\# \tilde \mu$, and the map~$M$ is Lipschitz because its Jacobian is bounded and~$\tilde \Omega$ is convex. Hence, we conclude by \Cref{cor:lipPF}.
\end{proof}

\section{Main theorem (\Cref{thm:main1})} \label{sec:main}

This section is subdivided into four parts: we first establish two preliminary lemmas, then we prove \Cref{thm:main1} for \emph{singular} measures, the upper bound~\eqref{eq:mainabove} (in general), and eventually the lower bound~\eqref{eq:mainbelow}.

\subsection{Preliminary lemmas}

\begin{lemma} \label{lemma:seqlemma}
	Let~$(b_k)_{k \in \N_1}$ be a sequence of nonnegative numbers, infinitesimal as~$k \to \infty$. Then there exists a sequence~$(k_n)_{n \in \N_1} \subseteq \N_1$ such that~$k_n \to \infty$ as~$n \to \infty$ and
	\begin{equation} \label{eq:seqlemma}
		\lim_{n \to \infty} n^{-1/d} 2^{k_n} = \lim_{n \to \infty} n^{1/d} 2^{-k_n} b_{k_n} = 0 \fstop
	\end{equation}
\end{lemma}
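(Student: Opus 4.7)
The plan is to choose $k_n$ so that $2^{k_n}$ lies near the geometric mean of $n^{1/d}$ and $n^{1/d}/b_{k_n}$; the two quantities to control then become comparable to $\sqrt{b_{k_n}}$, which tends to $0$ as soon as $k_n \to \infty$. The subtlety is that this target depends implicitly on $k_n$, so care is needed to make the construction well-defined — especially when $b_k$ can decrease erratically.

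First I would reduce to the case that $(b_k)$ is nonincreasing, by replacing it with its monotone envelope $B_k \coloneqq \sup_{j \geq k} b_j$: since $b_k \leq B_k$ and $B_k \to 0$, any sequence $(k_n)$ that works for $B$ works for $b$. If $B_k = 0$ from some index on, the conclusion is trivial by picking any slowly diverging $k_n$ (for instance $k_n = \lfloor \log_2 \log_2 n \rfloor$), so I may further assume $B_k > 0$ for every $k$.

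The main step, and the main obstacle, is to build an auxiliary sequence $(c_k) \subseteq (0, \infty)$ that is nondecreasing, unbounded, has bounded ratios ($c_{k+1} \leq 2 c_k$), and satisfies $c_k^2 B_k \to 0$. Without the bounded-ratio condition, a sequence $(B_k)$ with large jumps would break the verification below. A concrete recipe is $c_1 \coloneqq 1$ and
\[
c_{k+1} \coloneqq \min\bigl(2 c_k,\, B_{k+1}^{-1/4}\bigr).
\]
An easy induction gives $c_k \leq B_k^{-1/4}$, whence $c_k^2 B_k \leq B_k^{1/2} \to 0$; monotonicity and the bounded-ratio property are built in, and $(c_k)$ must diverge, since otherwise $B_k^{-1/4} \to \infty$ would eventually force $c_{k+1} = 2 c_k$, contradicting boundedness.

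With $(c_k)$ in hand, set $\psi(k) \coloneqq 2^k c_k$, which is strictly increasing to $\infty$ (in fact $\psi(k+1) \geq 2\, \psi(k)$) while $\psi(k+1) \leq 4 \cdot 2^k c_k$. I then define
\[
k_n \coloneqq \max\{k \in \N_1 \,:\, \psi(k) \leq n^{1/d}\}
\]
for all sufficiently large $n$ (and, say, $k_n = 1$ otherwise). The defining inequalities $\psi(k_n) \leq n^{1/d} < \psi(k_n + 1)$ immediately yield $k_n \to \infty$ together with
\[
n^{-1/d} 2^{k_n} \leq \frac{1}{c_{k_n}} \xrightarrow[n\to\infty]{} 0, \qquad n^{1/d} 2^{-k_n} b_{k_n} \leq 4\, c_{k_n} B_{k_n} = \frac{4\, c_{k_n}^2 B_{k_n}}{c_{k_n}} \xrightarrow[n\to\infty]{} 0,
\]
since $c_{k_n} \to \infty$ and $c_{k_n}^2 B_{k_n} \to 0$.
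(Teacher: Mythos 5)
Your proof is correct and, since the paper only cites Barthe--Bordenave for this lemma rather than reproducing an argument, it gives a genuinely self-contained treatment. The key idea — smoothing the auxiliary sequence so that consecutive ratios stay bounded, then letting $\psi(k) = 2^k c_k$ define $k_n$ by a threshold — handles precisely the difficulty you correctly flag (that $B_k$ can drop too fast between consecutive indices, which would break a naive choice such as $\psi(k) = 2^k/\sqrt{B_k}$).

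One small point worth tidying up: the "easy induction" giving $c_k \le B_k^{-1/4}$ has no base case if $B_1 > 1$ (then $c_1 = 1 > B_1^{-1/4}$), and for the same reason $(c_k)$ need not be nondecreasing at the first step, so $\psi$ need not satisfy $\psi(2) \ge 2\psi(1)$. None of this affects the conclusion, since all the estimates only need to hold for $k$ large and $k_n \to \infty$, but it is cleaner either to start the recursion with $c_1 := \min\bigl(1, B_1^{-1/4}\bigr)$ or to state the inequalities $c_k \le B_k^{-1/4}$, $c_{k+1} \ge c_k$, $\psi(k+1) \ge 2\psi(k)$ only for $k \ge 2$, which is how they actually follow from the recursion. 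With that adjustment the argument is airtight.
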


\begin{proof}
	The existence of such a sequence is established in the proof of~\cite[Theorem~5]{BartheBordenave13} by F.~Barthe and C.~Bordenave.
\end{proof}

\begin{lemma} \label{lemma:partition}
	Let~$C \subseteq \R^d$ be a closed set and let~$\rho \in L^1_{\ge 0}(\R^d)$. For every~$k \in \N_1$ and~$s \ge 0$ define the open sets
	\begin{equation} \label{eq:lemma:partition:1}
		\Omega_i \coloneqq (0, 2^{-k})^d+2^{-k}i \comma \qquad i \in \Z^d \comma
	\end{equation}
	and
	\begin{equation}
		\Omega^{(k)} \coloneqq \interior\left(\bigcup_{i \in \Z^d \, : \, \overline{\Omega_i} \cap C = \emptyset} \overline{\Omega_i}\right) \comma
	\end{equation}
	(see \Cref{fig:geomSetup}), the set of indices
	\begin{equation} \label{eq:lemma:partition:2}
		I_{k,s} \coloneqq 
			\set{ i \in \Z^d \, : \, \displaystyle \norm{x-y} > s \, \, \forall x \in \Omega_i \, \, \forall y \in \R^d \setminus {\Omega^{(k)}}} \comma
	\end{equation}
	and the function
	\begin{equation} \label{eq:lemma:partition:4}
		\rho_{k,s} \coloneqq \sum_{i \in I_{k,s}} \left(\fint_{\Omega_i} \rho \, \dif \LebRd \right) \1_{\Omega_i} \fstop
	\end{equation}
	Then~$\rho_{k,s} \to \rho|_{\R^d \setminus C}$ almost everywhere and in~$L^1(\R^d)$ as~$k \to \infty$ and~$s \to 0$.
\end{lemma}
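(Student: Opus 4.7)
The plan is to carry out a geometric analysis identifying a quantitative sufficient condition for $i \in I_{k,s}$, then deduce pointwise a.e.\ convergence from the Lebesgue differentiation theorem on dyadic cubes, and finally upgrade to $L^1$ convergence via Scheff\'e's lemma.

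First, I would show the identity
\[
\R^d \setminus \Omega^{(k)} = \bigcup_{j\,:\,\overline{\Omega_j}\cap C \neq \emptyset} \overline{\Omega_j},
\]
so that every ``bad'' cube appearing in this union lies within the tubular $\sqrt{d}\,2^{-k}$-neighborhood of $C$. Applying the triangle inequality to $x \in \Omega_i$, $y \in \Omega_j$ with $\Omega_j$ bad, and a point $z \in C \cap \overline{\Omega_j}$, one gets $\norm{x-y} \geq \dist(\Omega_i, C) - \sqrt{d}\,2^{-k}$. Hence the implication
\[
\dist(\Omega_i, C) > s + \sqrt{d}\,2^{-k} \quad \Longrightarrow \quad i \in I_{k,s}
\]
holds, which will be the key quantitative criterion used below.

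Next, I would establish pointwise a.e.\ convergence. For $x\in C$, either $x$ lies on the dyadic grid (a null set) or the unique open cube $\Omega_i$ containing $x$ satisfies $\overline{\Omega_i}\cap C \ni x$, so $i\notin I_{k,s}$ and $\rho_{k,s}(x)=0=\rho\,\1_{\R^d\setminus C}(x)$. For $x\in\R^d\setminus C$, closedness of $C$ yields $\delta\coloneqq\dist(x,C)>0$, and since $\dist(\Omega_{i(k)}, C) \ge \delta - \sqrt{d}\,2^{-k}$ for the unique dyadic cube $\Omega_{i(k)}$ at level $k$ containing $x$, the criterion above gives $i(k)\in I_{k,s}$ whenever $s + 2\sqrt{d}\,2^{-k}<\delta$, which holds eventually along any $k\to\infty$, $s\to 0^+$. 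Thus $\rho_{k,s}(x)=\fint_{\Omega_{i(k)}}\rho\,\dif\LebRd$, and since the dyadic cubes $\Omega_{i(k)}$ have uniformly bounded eccentricity relative to Euclidean balls centered at $x$, the Lebesgue differentiation theorem yields $\rho_{k,s}(x)\to\rho(x)$ at every Lebesgue point of $\rho$, hence a.e.

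To upgrade a.e.\ convergence to $L^1$ convergence, I would invoke Scheff\'e's lemma: since both $\rho_{k,s}$ and $\rho\,\1_{\R^d\setminus C}$ are nonnegative and integrable, it suffices to pair the a.e.\ convergence with convergence of integrals. Writing
\[
\int\rho_{k,s}\,\dif\LebRd = \int_{\bigcup_{i\in I_{k,s}}\Omega_i}\rho\,\dif\LebRd,
\]
one has $\1_{\bigcup_{i\in I_{k,s}}\Omega_i}\to \1_{\R^d\setminus C}$ a.e.\ by the pointwise analysis above, and dominated convergence (with dominator $\rho\in L^1(\R^d)$) delivers the required convergence of integrals. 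The main obstacle I anticipate is the first geometric step, extracting the quantitative criterion for $i\in I_{k,s}$ and properly tracking the asymmetric roles of $k$ and $s$; once that is done, the rest reduces to standard measure-theoretic machinery.
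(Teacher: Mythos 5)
Your proposal is correct and follows essentially the same route as the paper: establish a quantitative sufficient condition for $i\in I_{k,s}$ via the geometry of dyadic cubes near $C$, invoke the Lebesgue differentiation theorem for pointwise a.e.\ convergence (the paper cites Cohn's theorem), and upgrade to $L^1$ convergence via Scheff\'e's lemma. The paper's proof is terser but uses the exact same ingredients; your version fills in the geometric details (the quantitative criterion $\dist(\Omega_i,C)>s+\sqrt{d}\,2^{-k}\Rightarrow i\in I_{k,s}$ is consistent with the paper's threshold $\dist(x,C)>\sqrt{d}\,2^{1-k}+s$, which additionally absorbs the diameter of the cube containing $x$).
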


\begin{proof}
	Almost every point~$x \in \R^d \setminus C$ (for example, the points out of~$C$ for which all coordinates are irrational) is contained in some~$\Omega_i$ with~$i \in I_{k,s}$ as soon as its distance from~$C$ is larger than~$\sqrt{d}2^{1-k}+s$. Therefore, by~\cite[Theorem~6.2.3]{Cohn13}, we have~$\rho_{k,s} \to \rho|_{\R^d \setminus C}$ almost everywhere and, by Scheffé's Lemma~\cite[Theorem~5.10]{Williams91}, in~$L^1(\R^d)$.
\end{proof}

\begin{remark} \label{rmk:omegak}
	With the notation of \Cref{lemma:partition}, note the following:
	\[
		\bigcup_{i \in I_{k,s}} \Omega_i \subseteq \bigcup_{i \in I_{k,0}} \Omega_i \subseteq \Omega^{(k)} \subseteq \bigcup_{i \in I_{k,0}} \overline{\Omega_i} \subseteq \R^d \setminus C \comma \qquad k \in \N_1 \comma \quad s \ge 0 \fstop
	\]
\end{remark}

\subsection{Singular measures}

The proof of \Cref{thm:main1} for singular measures is inspired by~\cite[Proposition~3]{DereichScheutzowSchottstedt13}. We will combine the following three observations:
\begin{itemize}
	\item we can split~$\mu$ into measures~$\mu^i$ supported on small \emph{cubes} (plus a remainder that we control with \Cref{thm:nonasy});%
	\item the error~$\eqe{p}{n}^p(\mu)$ is subadditive in the sense of \Cref{rmk:subaddErr} and, by \Cref{thm:MerigotMirebeauChevallier}, for every~$\mu^i \in \ProRd{{\mathrm{c}}}$, we can bound~$n^{1/d}\eqe{p}{n}(\mu^i)$ in terms of the diameter of the support of~$\mu^i$;
	\item since~$\mu$ is singular, it is concentrated on open sets with arbitrarily small Lebesgue measure.
\end{itemize}

\begin{proof}[Proof of \Cref{thm:main1} for~$\mu \perp \LebRd$]
	Choose any open set~$\Omega \subseteq \R^d$ such that~$\mu(\Omega) = 1$, and write it as a countable \emph{disjoint} union of (half-open) \emph{cubes}~$\set{Q_i}_{i \in \N_1}$, see, e.g.,~\cite[Theorem~1.11]{WheedenZygmund15}. Note, in particular, that~$\diam(Q_i) \lesssim_d \abs{Q_i}^{1/d}$.
	
	Pick two numbers~$n,i_\mathrm{max} \in \N_1$ and define
	\[
		n_i \coloneqq \lfloor n \mu(Q_i) \rfloor \comma \quad \mu^i \coloneqq \begin{cases}
			\frac{n_i}{n\mu(Q_i)} \mu|_{Q_i} &\text{if } n_i \ge 1 \comma\\
			0 &\text{otherwise,}
		\end{cases} \qquad i \in \set{1,\dots,i_\mathrm{max}} \fstop
	\]
	Notice that~$\mu^i \le \mu$ for every~$i$ and, since the cubes are all disjoint, also the sum~$\sum_{i=1}^{i_\mathrm{max}} \mu^i$ is not larger than~$\mu$. Define
	\[
		n_0 \coloneqq n - \sum_{i=1}^{i_\mathrm{max}} n_i \comma \quad  \mu^0 \coloneqq \mu - \sum_{i = 1}^{i_\mathrm{max}} \mu^i \comma
	\]
	and notice that~$\normTV{\mu^0} = n_0/n$.
	
	Owing to \Cref{rmk:subaddErr}, we have
	\[
		\eqe{p}{n}^p(\mu) \le \sum_{i=0}^{i_\mathrm{max}} \eqe{p}{n_i}^p(\mu^i) \fstop
	\]
	\Cref{thm:nonasy} (or \Cref{thm:MerigotMirebeauChevallier} for~$i \ge 1$) yields
	\begin{align} \label{eq:lemma:singular:1}
		\begin{split}
		\eqe{p}{n}^p(\mu) &\lesssim_{p,d,\theta} n^{\frac{p}{\theta}-1}n_0^{1-\frac{p}{d}-\frac{p}{\theta}} \left(\int \norm{x}^{\theta} \, \dif \mu_0 \right)^\frac{p}{\theta}  + \sum_{i=1}^{i_\mathrm{max}} n^{-1} n_i^{1-\frac{p}{d}} \diam(Q_i)^p  \\
		&\lesssim_{p,d,\theta,\mu} n^{\frac{p}{\theta}-1}n_0^{1-\frac{p}{d}-\frac{p}{\theta}} + n^{-1} \sum_{i=1}^{i_\mathrm{max}} n_i^{1-\frac{p}{d}} \abs{Q_i}^{p/d} \comma
		\end{split}
	\end{align}
	where~$\theta > p^*$ is such that~$\mu \in \ProRd{\theta}$. Note that~$a \coloneqq 1-\frac{p}{d}-\frac{p}{\theta} > 0$. Since~$p < d$, we can apply H\"older's inequality to the last sum and obtain
	\begin{equation} \label{eq:lemma:singular:2}
		\sum_{i=1}^{i_\mathrm{max}} n_i^{1-\frac{p}{d}} \abs{Q_i}^{p/d} \le \left( \sum_{i=1}^{i_\mathrm{max}} n_i \right)^{1-\frac{p}{d}} \left(\sum_{i=1}^{i_\mathrm{max}} \abs{Q_i} \right)^{p/d} \le n^{1-\frac{p}{d}} \abs{\Omega}^{p/d} \fstop
	\end{equation}
	Furthermore, we notice that
	\begin{equation} \label{eq:lemma:singular:3}
		n_0 \le n - \sum_{i=1}^{i_\mathrm{max}} \bigl(n\mu(Q_i)-1\bigr) = n\left( 1-\sum_{i=1}^{i_\mathrm{max}} \mu(Q_i) \right) + {i_\mathrm{max}} \fstop
	\end{equation}
	We now combine~\eqref{eq:lemma:singular:1},~\eqref{eq:lemma:singular:2}, and~\eqref{eq:lemma:singular:3} to infer
	\[
		n^{p/d} \eqe{p}{n}^p(\mu) \lesssim_{p,d,\theta,\mu} \left( 1-\sum_{i=1}^{i_\mathrm{max}} \mu(Q_i) + \frac{{i_\mathrm{max}}}{n} \right)^{a} + \abs{\Omega}^{p/d} \semicolon
	\]
	hence,
	\begin{equation} \label{eq:lemma:singular:4}
		\limsup_{n \to \infty} n^{p/d} \eqe{p}{n}^p(\mu) \lesssim_{p,d,\theta,\mu} \left( 1-\sum_{i=1}^{i_\mathrm{max}} \mu(Q_i) \right)^{a} + \abs{\Omega}^{p/d} \fstop
	\end{equation}
	Since~$\mu$ is concentrated on~$\bigcup_{i \in \N_1} Q_i$, the first term at the right-hand side of~\eqref{eq:lemma:singular:4} tends to~$0$ as~${i_\mathrm{max}} \to \infty$. Moreover, when~$\mu$ is singular,~$\abs{\Omega}$ can be made arbitrarily small.
\end{proof}

\subsection{Upper bound}

To prove the upper bound, we first assume that the measure~$\mu$ is compactly supported and absolutely continuous. We split the domain into cubes~$\set{\Omega_i}_i$ with edge length $2^{-k}$ and consider an approximating density~$\rho_k$ that is constant on each of these cubes. We then construct a further approximation $\rho_{k}^{(n)}$ having mass on each cube equal to an integer multiple of~$1/n$, i.e., of the form
\[
	\rho_{k}^{(n)} \coloneqq \sum_{i} \frac{n_i}{n} \frac{\1_{\Omega_i}}{\abs{\Omega_i}}
\]
with~$n_i \approx n \mu(\Omega_i)$.
Using \Cref{rmk:scaling}, \Cref{rmk:subaddErr}, and \Cref{prop:uniformCube}, it is possible to show that
\[
	\limsup_{k \to \infty} \limsup_{n \to \infty} n^{1/d} \eqe{p}{n}\bigl(\rho_{k}^{(n)} \bigr) \le \eqc{p}{d} \left(\int_{\R^d} \rho^\frac{d-p}{d} \, \dif \LebRd \right)^{1/p} \fstop
\]
Indeed, heuristically:
\begin{alignat*}{2}
	\eqe{p}{n}^p\bigl(\rho_k^{(n)}\bigr) &\le \sum_i \frac{n_i}{n} \abs{\Omega_i}^{p/d} \eqe{p}{n_i}^p(U_d) &&\qquad \text{(Rmk.~\ref{rmk:scaling}, Rmk.~\ref{rmk:subaddErr})} \\
	&\approx \eqc{p}{d}^p \sum_i \frac{n_i}{n} \abs{\Omega_i}^{p/d} n_i^{-p/d} &&\qquad \text{(Prop.~\ref{prop:uniformCube})} \\
	&\approx n^{-p/d}\eqc{p}{d}^p \sum_i \mu(\Omega_i)^\frac{d-p}{d} \abs{\Omega_i}^{p/d} &&\qquad \text{(} n_i \approx n\mu(\Omega_i) \text{)} \\
	&\approx n^{-p/d}\eqc{p}{d}^p \int_{\R^d} \rho^\frac{d-p}{d} \, \dif \LebRd \fstop
\end{alignat*}

Our argument is similar to the proof of Zador's Theorem (see \cite[Steps~2 \& 3 in Theorem~6.2]{GrafLuschgy00}), but we have an additional obstacle: for fixed~$k$, the approximating error explodes as~$n \to \infty$. Even worse: the two errors made by replacing~$\mu$ with~$\rho_k $, and~$\rho_k $ with~$\rho_{k}^{(n)} $ compete with each other, in the sense that (up to constant), each one is \emph{almost} equal to a negative power of the other. However, in the upper bound for the error~$W_p^p(\mu,\rho_k )$, thanks to \Cref{lemma:comparL1}, there is also the additional term~$\norm{\rho-\rho_k}_{L^1}$. This term is infinitesimal as~$k \to \infty$ (here we use that~$\mu$ is absolutely continuous and \Cref{lemma:partition}). Taking advantage of \Cref{lemma:seqlemma}, we can let~$k$ tend to infinity with~$n$ in such a way that both approximating errors become negligible. This solution is partly inspired by the proof of~\cite[Theorem~5]{BartheBordenave13}.

To deal with a general measure, we split it into its singular part, a compactly supported and absolutely continuous part, and a remainder. To the latter, we apply \Cref{thm:nonasy}.

\begin{proof}[Proof of the upper bound in~\Cref{thm:main1}]
	\emph{Step 1 ($\mu \in \ProRd{{\mathrm{c}}}$ and $\mu \ll \LebRd$).} We start by proving the upper bound~\eqref{eq:mainabove} under the additional assumption that~$\mu$ is absolutely continuous, i.e.,~$\mu = \rho \LebRd$, and compactly supported. It is easy to check that, if~$T \colon \R^d \to \R^d$ is a homothety, then~\eqref{eq:mainabove} for~$\mu$ and for~$T_\# \mu$  are equivalent. Thus, without loss of generality, we assume that~$\mu$ is concentrated on~$(0,1)^d$.

	Fix~$k,n \in \N_1$. Let us define~$\set{\Omega_i}_{i \in \Z^d}$,~$I_k = I_{k,0}$, and~$\rho_k = \rho_{k,0}$ as in \Cref{lemma:partition} with~$C \coloneqq \R^d \setminus (-1,2)^d$ and~$s = 0$. Notice that~$\norm{\rho_k}_{L^1} = \normTV{\mu}=1$. For every~$i \in I_k$, we define~$n_i = n_i(n,k) \coloneqq \lfloor n \mu(\Omega_i) \rfloor$, and we let~$n_0 \coloneqq n-\sum_{i \in I_k} n_i$. We then set
	\[
	 \rho_{k}^{(n)} \coloneqq \sum_{i \in I_k} \frac{n_i}{n} 2^{kd} \1_{\Omega_i} \le \rho_k \fstop
	\]
	
	By using the triangle inequality, it is immediate to check that
	\begin{equation} \label{eq:thm:main1:1}
		\eqe{p}{n}(\mu) \le \eqe{p}{n}(\rho_k ) + W_p(\mu, \rho_k ) \fstop
	\end{equation}
	\Cref{rmk:subaddErr} yields
	\begin{equation} \label{eq:thm:main1:2}
		\eqe{p}{n}^p(\rho_k ) \le \eqe{p}{n_0}^p\bigl(\rho_k-\rho_{k}^{(n)}\bigr) + \sum_{i \in I_k} \eqe{p}{n_i}^p\left( \frac{n_i}{n} U(\Omega_i) \right)  \comma
	\end{equation}
	and can use \Cref{rmk:scaling} to write
	\begin{equation} \label{eq:thm:main1:3}
		\eqe{p}{n_i}^p\left( \frac{n_i}{n} U(\Omega_i) \right) = \frac{n_i}{n} \eqe{p}{n_i}^p\bigl(U(\Omega_i)\bigr)= \frac{n_i}{n 2^{kp}} \eqe{p}{n_i}^p(U_d) \comma \qquad i \in I_k \text{ s.t.~} n_i \ge 1 \fstop
	\end{equation}
	The $\frac{1}{p}$-homogeneity of~$\eqe{p}{n_0}$, combined with \Cref{thm:MerigotMirebeauChevallier} (recall that, currently, all measures are concentrated on~$(0,1)^d$) gives
	\begin{equation*}
		\eqe{p}{n_0}^p\bigl(\rho_k-\rho_{k}^{(n)}\bigr) \lesssim_{p,d} \norm{\rho_k-\rho_{k}^{(n)}}_{L^1} n_0^{-p/d} = \frac{n_0^\frac{d-p}{d}}{n} \fstop
	\end{equation*}
	Thus, since
	\[
	n_0 = n - \sum_{i \in I_k} \lfloor n \mu(\Omega_i) \rfloor \le n - \sum_{i \in I_k} n \mu(\Omega_i) + \# I_k = \# I_k \lesssim_d 2^{kd} \comma
	\]
	we have
	\begin{equation} \label{eq:thm:main1:4}
		\eqe{p}{n_0}^p\bigl(\rho_k-\rho_{k}^{(n)}\bigr) \lesssim_{p,d} \frac{2^{k(d-p)}}{n}
	\end{equation}
	(here we use~$p < d$).
	Moreover, by applying \Cref{lemma:subadditivityWasserstein} and \Cref{lemma:comparL1}, we get
	\begin{align} \label{eq:thm:main1:5}
		\begin{split}
			W_p^p(\mu, \rho_k ) &\le \sum_{i \in I_k} W_p^p\left( \mu|_{\Omega_i}, \mu(\Omega_i) U(\Omega_i) \right) \\
			&\le \sum_{i \in I_k} \diam(\Omega_i)^p \norm{(\rho-\rho_k)|_{\Omega_i}}_{L^1} 
			\lesssim_{p,d} 2^{-kp} \norm{\rho-\rho_k}_{L^1} \fstop
		\end{split}
	\end{align}
	
	By \Cref{lemma:seqlemma} and since~$\rho_k \stackrel{L^1}{\to} \rho$ as~$k \to \infty$ (\Cref{lemma:partition}), we can choose~$k=k_n$ as a function of~$n$ in such a way that %
	\begin{equation} \label{eq:thm:main1:7}
	\lim_{n \to \infty}  n^{-1/d} 2^{k_n} = 	\lim_{n \to \infty} n^{1/d}2^{-k_n}\norm{\rho-\rho_{k_n}}_{L^1}^{1/p} = 0 \comma
	\end{equation}
	By~\eqref{eq:thm:main1:1},~\eqref{eq:thm:main1:2},~\eqref{eq:thm:main1:3},~\eqref{eq:thm:main1:4},~\eqref{eq:thm:main1:5}, and~\eqref{eq:thm:main1:7} we thus have
	\begin{align*}
		\begin{split}
		\limsup_{n \to \infty} n^{p/d} \eqe{p}{n}^p(\mu) &\le \limsup_{n \to \infty} 2^{-k_np} \sum_{i \in I_{k_n} \, : \, n_i \ge 1}  \left(\frac{n_i}{n } \right)^\frac{d-p}{d} n_i^{p/d} \eqe{p}{n_i}^p(U_d) \\
		&\le \limsup_{n \to \infty} \int \rho_{k_n}^\frac{d-p}{d} h( \lfloor n2^{-k_n d} \rho_{k_n} \rfloor ) \, \dif \LebRd \comma
	\end{split}
\end{align*}
	where
	\[
		h(m) \coloneqq \begin{cases} m^{p/d} \eqe{p}{m}^p(U_d) &\text{if } m \in \N_1 \comma \\
			0 &\text{if } m = 0 \fstop
		\end{cases}
	\]
	Note that~$h$ is nonnegative, bounded, and converges to~$\eqc{p}{d}^p$ as~$m \to \infty$ by \Cref{prop:uniformCube}. In particular, since~$n2^{-k_nd} \to \infty$ and~$\rho_{k_n} \to \rho$ a.e., we have
	\begin{equation} \label{eq:thm:main1:8}
		\lim_{n \to \infty} h( \lfloor n2^{-k_n d} \rho_{k_n} \rfloor ) \, \dif \LebRd = \eqc{p}{d}^p \comma \qquad \text{a.e.~on } \set{\rho > 0} \fstop
	\end{equation}
	Since~$\frac{d-p}{d} \in (0,1)$, the function~$t \mapsto t^\frac{d-p}{d}$ is subadditive. Therefore,
	\begin{multline*}
		\int \rho_{k_n}^\frac{d-p}{d} h( \lfloor n2^{-k_n d} \rho_{k_n} \rfloor ) \, \dif \LebRd \le \int \rho^\frac{d-p}{d} h( \lfloor n2^{-k_n d} \rho_{k_n} \rfloor ) \, \dif \LebRd \\
		+ \sup_{m \in \N_0} h(m) \int \abs{\rho - \rho_{k_n}}^\frac{d-p}{d} \, \dif \LebRd \fstop
	\end{multline*}
	Using that~$\rho$ and~$\rho_{k_n}$ are supported on~$[0,1]^d$, Jensen's inequality gives
	\[
		\int \abs{\rho - \rho_{k_n}}^\frac{d-p}{d} \, \dif \LebRd \le \norm{\rho-\rho_{k_n}}_{L^1}^\frac{d-p}{d} \to 0 \comma \quad \text{as } n \to \infty \fstop
	\]
	In the end, we obtain
	\[
		\limsup_{n \to \infty} n^{p/d} \eqe{p}{n}^p(\mu) \le \limsup_{n \to \infty} \int \rho^\frac{d-p}{d} h(\lfloor n {2^{-k_n}d} \rho_{k_n} \rfloor) \, \dif \LebRd = \eqc{p}{d}^p \int \rho^\frac{d-p}{d} \, \dif \LebRd \comma
	\]
	where the last identity follows from~\eqref{eq:thm:main1:8} and the dominated convergence theorem.

	\emph{Step 2 (conclusion).} Let~$\mu \in \ProRd{\theta}$ and fix~$r > 0$. We let:
	\begin{itemize}
		\item $\mu^1$ be the \emph{absolutely continuous part} of~$ \mu|_{[-r,r]^d}$, 
		\item $\mu^2$ be the \emph{singular part} of~$\mu|_{[-r,r]^d}$,
		\item $\mu^3 \coloneqq \mu|_{\R^d \setminus [-r,r]^d}$.
	\end{itemize}
	Furthermore, for~$n \in \N_1$, we define
	\[
	n_i \coloneqq \left \lfloor n \normTV{\mu^i} \right \rfloor \comma \quad \mu^{i,n} \coloneqq \begin{cases}
		\frac{n_i}{n \, \normTV{ \mu^i} } \mu^i &\text{if } n_i \ge 1 \comma\\
		0 &\text{otherwise,}
	\end{cases} \qquad i \in \set{1,2,3} \comma
	\]
	as well as~$n_0 \coloneqq n-n_1-n_2-n_3$ and~$\mu^{0,n} \coloneqq \mu-\mu^{1,n}-\mu^{2,n}-\mu^{3,n}$. Note that~$n_0 \le 3$.
	
	By \Cref{rmk:subaddErr}, we can make the estimate
	\begin{equation} \label{eq:thm:main1:1bis}
		\limsup_{n \to \infty} n^{p/d} \eqe{p}{n}^p(\mu) \le \sum_{i=0}^3 \limsup_{n \to \infty} n^{p/d} \eqe{p}{n_i}^p(\mu^{i,n}) \fstop
	\end{equation}
	We shall bound the four terms in the sum separately.

	When~$n_0 \ge 1$, \Cref{rmk:scaling} and \Cref{thm:nonasy} yield
	\begin{align*}
		n^{p/d} \eqe{p}{n_0}^p(\mu^{0,n}) &= \left(\frac{n_0}{n}\right)^{1-\frac{p}{d}} n_0^{p/d} \eqe{p}{n_0}^p\left( \frac{\mu^{0,n}}{\normTV{\mu^{0,n}}} \right) \\
		&\stackrel{\eqref{eq:thm:nonasy:0}}{\lesssim_{p,d,\theta}} \left(\frac{n_0}{n}\right)^{1-\frac{p}{d}} \left(\int \norm{x}^{\theta} \, \dif \frac{\mu^{0,n}}{\normTV{\mu^{0,n}}} \right)^{p/\theta} \\
		&\le \left(\frac{n_0}{n}\right)^{1-\frac{p}{d}-\frac{p}{\theta}} \left(\int \norm{x}^{\theta} \, \dif \mu \right)^{p/\theta} \fstop
	\end{align*}
	The exponent~$a \coloneqq 1-\frac{p}{d}-\frac{p}{\theta}$ is positive; hence (the case~$n_0 = 0$ is trivial),
	\begin{equation*}
		n^{p/d} \eqe{p}{n_0}^p(\mu^{0,n}) \lesssim_{p,d,\theta,\mu} n^{-a} \comma
	\end{equation*}
	which means that the $0^\text{th}$ term of the sum in \eqref{eq:thm:main1:1bis} is zero.
	
	When~$n_3 \ge 1$, similar computations give
	\[
		n^{p/d} \eqe{p}{n_3}^p(\mu^{3,n}) \lesssim_{p,d,\theta} \left( \frac{n_3}{n} \right)^{a} \left( \int \norm{x}^{\theta} \, \dif \mu^3 \right)^{p/\theta} \fstop
	\]
	Using that~$n_3 \le n \normTV{\mu^3} \le n \normTV{\mu}$, we thus obtain (trivially if~$n_3 = 0$)
	\begin{equation*}
		n^{p/d} \eqe{p}{n_3}^p(\mu^{3,n}) \lesssim_{p,d,\theta,\mu} \left( \int \norm{x}^{\theta} \, \dif \mu|_{\R^d \setminus [-r,r]^d} \right)^{p/\theta} \fstop 
	\end{equation*}
	
	If~$\normTV{\mu^2} > 0$, then~$n_2 \to \infty$ as~$n \to \infty$; therefore \Cref{thm:main1} for singular measures yields
	\[
		\limsup_{n \to \infty} n^{p/d} \eqe{p}{n_2}^p(\mu^{2,n}) = \limsup_{n \to \infty} \left(\frac{n_2}{n} \right)^{\frac{d-p}{d}} n_2^{p/d} \eqe{p}{n_2}^p\left( \frac{\mu^2}{\normTV{\mu^2}} \right) = 0 \comma
	\]
	and the same conclusion holds trivially if~$\normTV{\mu^2}=0$.
	
	If~$\norm{\mu^1} > 0$, then~$n_1 \to \infty$ as~$n \to \infty$; therefore the previous Step gives
	\begin{align*}
		\limsup_{n \to \infty} n^{p/d} \eqe{p}{n_1}^p(\mu^{1,n}) &= \limsup_{n \to \infty} \left(\frac{n_1}{n}\right)^{\frac{d-p}{d}} n_1^{p/d} \eqe{p}{n_1}^p \left( \frac{\mu^1}{\normTV{\mu^1}} \right) \\
		&\le \eqc{p}{d}^p \normTV{\mu^1}^{\frac{d-p}{d}} \int_{[-r,r]^d} \left(\frac{\rho}{\normTV{\mu^1}}\right)^\frac{d-p}{d} \, \dif \LebRd \\
		&= \eqc{p}{d}^p \int_{[-r,r]^d} \rho^\frac{d-p}{d} \, \dif \LebRd \comma
	\end{align*}
	and the same conclusion holds trivially if~$\normTV{\mu^1} = 0$.
	
	In the end,~\eqref{eq:thm:main1:1bis} and the subsequent estimates prove that
	\[
		\limsup_{n \to \infty} n^{p/d} \eqe{p}{n}^p(\mu) \le \eqc{p}{d}^p \int_{[-r,r]^d} \rho^\frac{d-p}{d} \, \dif \LebRd + c_{p,d,\theta,\mu} \left( \int \norm{x}^{\theta} \, \dif \mu|_{\R^d \setminus [-r,r]^d} \right)^{p/\theta}
	\]
	for some constant~$c_{p,d,\theta,\mu}$ independent of~$r$. We conclude by letting~$r \to \infty$.
\end{proof}

\subsection{Lower bound}

To prove the lower bound, we again split the domain into cubes~$\set{\Omega_i}_i$ with edge length~$2^{-k}$ and approximate the density~$\rho$ of the given measure~$\mu$ by the piecewise constant function
\[
	\rho_{k,0} = \sum_{i} \mu(\Omega_i) \frac{\1_{\Omega_i}}{\abs{\Omega_i}} \fstop
\]
Given an optimal empirical quantizer~$\mu_n$ for~$\mu$, we aim to bound from below the boundary Wasserstein pseudodistance between~$\rho_{k,0}$ and~$\mu_n$. We make use of this pseudodistance---smaller than the Wasserstein distance---because its geometric superadditivity (\Cref{lemma:superadditivityWb}) is well-suited to reduce the lower bound problem to the single cubes. On each cube, we use \Cref{lemma:lowerBoundWb} and the definition of~$q_{p,d}$ to obtain the integral of $\rho^\frac{d-p}{d}$. The argument can be sketched as follows:
\begin{alignat*}{2}
	W_p^p(\rho_{k,0},\mu_n) &\ge \sum_i Wb_{\Omega_i,p}^p\bigl(\mu(\Omega_i) U(\Omega_i), \mu_n|_{\Omega_i} \bigr) &&\qquad \text{(Lem.~\ref{lemma:superadditivityWb})} \\
	&\gtrapprox \sum_i e_{p,n_i}^p\bigl(\mu(\Omega_i) U(\Omega_i) \bigr) \text{ with } n_i \approx n \mu_n(\Omega_i) &&\qquad \text{(Lem.~\ref{lemma:lowerBoundWb})} \\
	&\ge q_{p,d}^p \sum_i n_i^{-p/d} \mu(\Omega_i) \abs{\Omega_i}^{p/d} &&\qquad \text{(Rmk.~\ref{rmk:scaling}, Def.~\ref{def:eqe})}
\end{alignat*}
and, since it is reasonable to expect that~$\mu_n(\Omega_i) \approx \mu(\Omega_i)$, we get
\[
	n^{p/d} W_p^p(\rho_{k,0},\mu_n) \gtrapprox q_{p,d}^p \int_{\R^d} \rho^\frac{d-p}{d} \, \dif \LebRd \fstop
\]

The idea of using the boundary Wasserstein pseudodistance (or a similar object) to exploit its geometric superadditivity is not new. It has been used to prove lower bounds in similar problems, see, e.g.,~\cite{BartheBordenave13,DereichScheutzowSchottstedt13,AmbrosioGoldmanTrevisan22}. There is, however, a technical difference between these works, which estimate the expected value of a functional of i.i.d.~random variables, and the current one. Given a set of $\mu$-distributed i.i.d.~random variables, the random number of those that fall within a certain region (cube) is a binomial r.v.~whose law can be explicitly determined in terms of~$\mu$. Instead, given an optimal empirical quantizer~$\mu_n$, it does not seem immediate to rigorously justify the heuristic~$\mu_n(\Omega_i) \approx \mu(\Omega_i)$. A considerable part of the proof is indeed devoted to this problem.

\begin{proof}[Proof of the lower bound in~\Cref{thm:main1}] %
	
	Fix~$k, n \in \N_1$ and~$s \in (0,2^{-k})$, choose two numbers~$\epsilon_1, \epsilon_2 \in (0, 1)$, and define~$\set{\Omega_i}_{i \in \Z^d},\Omega^{(k)},I_{k,0},I_{k,s},\rho_{k,0},\rho_{k,s}$ as in \Cref{lemma:partition} with~$C \coloneqq \supp (\mu^s)$. %
	Set
	\[
		\Omega_i^- \coloneqq \set{x \in \Omega_i \, : \, \dist(x,\R^d \setminus \Omega_i) > \epsilon_12^{-k-1} } \comma \qquad i \in \Z^d \fstop
	\]
	Note that each~$\Omega_{i}^-$ is an open cube with edge length equal to~$(1-\epsilon_1)2^{-k}$. It is also convenient to define the ``enlarged'' sets
	\[
	\Omega_i^+ \coloneqq \set{x \in \R^d \, : \, \dist(x,\Omega_i) < s } \comma \qquad i \in \Z^d \fstop
	\]
	
	\begin{figure}
		\centering
		\includegraphics[trim={10.9cm 10cm 3.6cm 10cm},clip,width = 0.6\textwidth,angle=90]{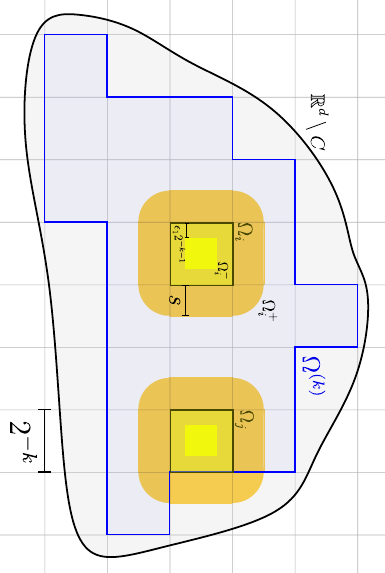}
		\caption{Geometric setup in the proof of the lower bound. In this example,~$i \in I_{k,s}$ and~$j \in I_{k,0} \setminus I_{k,s}$.}
		\label{fig:geomSetup}
	\end{figure}
	
	An important observation that we are going to use later is:
	\begin{equation} \label{eq:thm:main1:1ter}
		\abs{\Omega_i^+ \cap \Omega_j} \le s 2^{-k(d-1)} \quad \text{if } i \neq j \fstop
	\end{equation}

	We say that two cubes~$\Omega_i$ and~$\Omega_j$ are \emph{adjacent}, and we write~$i \sim j$, if~$\overline {\Omega_i} \cap \overline {\Omega_j} \neq \emptyset$ (it suffices that their closures share a single vertex). 
	Notice that each cube has~$3^d$ adjacent cubes, including itself, and that, since~$s < 2^{-k}$, the intersection~$\Omega_i^+ \cap \Omega_j$ is nonempty iff~$i \sim j$.

	Using~\Cref{lemma:existence}, pick~$\mu_n \in \ProRd{(n)}$ such that~$\eqe{p}{n}(\mu) = W_p(\mu,\mu_n)$. We have
	\begin{align} \label{eq:thm:main1:2ter} \begin{split}
		\eqe{p}{n}(\mu) &\ge Wb_{\Omega^{(k)},p}(\mu,\mu_n) \ge Wb_{\Omega^{(k)},p}(\rho_{k,0} , \mu_n) - Wb_{\Omega^{(k)},p}(\rho_{k,0} , \mu) \\
		&\ge Wb_{\Omega^{(k)},p}(\rho_{k,0} , \mu_n) - W_p(\rho_{k,0} , \mu|_{\Omega^{(k)}}) \comma
		\end{split}
	\end{align}
	where, in the last inequality, we used that~$\mu\bigl(\Omega^{(k)}\bigr) = \norm{\rho_{k,0}}_{L^1}$ (recall \Cref{rmk:omegak}). Note that, in the same way we derived~\eqref{eq:thm:main1:5}, we can deduce
	\begin{equation} \label{eq:thm:main1:3ter}
		W_p(\rho_{k,0},\mu|_{\Omega^{(k)}}) \lesssim_{p,d} 2^{-k} \norm{\rho|_{\Omega^{(k)}}-\rho_{k,0}}_{L^1}^{1/p} \fstop
	\end{equation}
	
	Let us focus on~$Wb_{\Omega^{(k)},p}(\rho_{k,0},\mu_n)$. Set~$ n_i \coloneqq n \mu_n(\Omega_i) \in \N_0$ for every~$i \in I_{k,s}$. %
	By the superadditivity property of \Cref{lemma:superadditivityWb},
	\[
		Wb_{\Omega^{(k)},p}^p\bigl(\rho_{k,0} , \mu_n\bigr) \ge \sum_{i \in I_{k,s} \, : \, \mu(\Omega_i) \ge \epsilon_2 2^{-kd}} Wb_{\Omega_i,p}^p\bigl(\rho_{k,0} , \mu_n\bigr) \comma
	\]
	and, by \Cref{lemma:lowerBoundWb},
	\begin{equation*}
		Wb_{\Omega_i,p}^p\bigl(\rho_{k,0} , \mu_n\bigr) \ge e_{p,  n_i+N}^p\bigl(\rho_{k,0} |_{\Omega_{i}^-}\bigr) 
		= \frac{\left(1 - \epsilon_1 \right)^{p+d}}{2^{kp}} \mu(\Omega_i) e_{p, n_i+N}^p (U_d) \comma \qquad
	\end{equation*}
	where~$N \coloneqq \left\lceil {2d} / \epsilon_1 \right\rceil^d$. %
	Hence, by the definition of~$q_{p,d}$, we have
	\begin{equation} \label{eq:thm:main1:5ter}
		Wb_{\Omega^{(k)},p}^p(\rho_{k,0}, \mu_n) \ge  \frac{q_{p,d}^p  \left(1 - \epsilon_1 \right)^{p+d}}{2^{kp}} \sum_{i \in I_{k,s} \, : \, \mu(\Omega_i) \ge \epsilon_2 2^{-kd}} \mu(\Omega_i) \bigl( n_i+N\bigr)^{-p/d} \fstop
	\end{equation}

	At this point, we need to estimate~$ n_i$ from above. To this aim, pick~$\gamma \in \Gamma b_{\Omega^{(k)}}(\rho_{k,0} , \mu_n)$, which gives:
	\begin{align*}
		\frac{ n_i}{n} &= \mu_n(\Omega_i) = \gamma\bigl(\overline{\Omega^{(k)}} \times \Omega_i\bigr) \\
		&= \gamma\Bigl(\bigl(\overline{\Omega^{(k)}} \setminus \Omega_i^+ \bigr) \times \Omega_i \Bigr)  +  \gamma\Bigl(\bigl( {\Omega^{(k)}} \cap \Omega_i^+\bigr) \times \Omega_i\Bigr) + \gamma\Bigl(\bigl( \partial{\Omega^{(k)}} \cap \Omega_i^+ \bigr) \times \Omega_i\Bigr) \\
		&\le \frac{1}{s^p} \underbrace{\int \norm{x-y}^p \, \dif \gamma|_{\overline{\Omega^{(k)}} \times \Omega_i}}_{\eqqcolon \alpha_i} +  \int_{\Omega_i^+} \rho_{k,0} \dif \LebRd + \gamma\Bigl(\bigl( \partial{\Omega^{(k)}} \cap \Omega_i^+ \bigr) \times \Omega_i\Bigr) \fstop
	\end{align*}
	If~$i \in I_{k,s}$, then the last term is zero since~$\partial \Omega^{(k)} \cap \Omega_i^+ = \emptyset$ by the definitions of~$I_{k,s}$ and~$\Omega_i^+$. Moreover, since~$s < 2^{-k}$, we have
	\[
		\int_{\Omega_i^+} \rho_{k,0} \dif \LebRd = \sum_{j \in I_{k,0} \, : \, j \sim i} 2^{kd} \mu(\Omega_j) \abs{\Omega_i^+ \cap \Omega_j} \stackrel{\eqref{eq:thm:main1:1ter}}{\le} \mu(\Omega_i) + \sum_{j \in I_{k,0} \, : \, j \sim i} 2^ks \mu(\Omega_j) \fstop
	\]
	We thus obtain
	\[
		 n_i + N \le n \mu(\Omega_i) + N + n \left( \frac{\alpha_i}{s^p} + \sum_{j \in I_{k,0} \, : \, j \sim i} 2^ks \mu(\Omega_j) \right) \fstop
	\]
	The elementary inequality~$(a+b)^{-\zeta} \ge a^{-\zeta} -  \zeta \frac{b}{a^{\zeta+1}}$, which holds for every~$a,\zeta > 0$ and~$b \ge 0$, yields
	\begin{multline} \label{eq:thm:main1:6ter}
		\frac{\mu(\Omega_i)}{\bigl( n_i+N\bigr)^{p/d}} \ge  \frac{\mu(\Omega_i)}{ \bigl( n \mu(\Omega_i) + N \bigr)^{p/d}}  - \underbrace{\frac{p}{d}}_{<1} \cdot \underbrace{\frac{n \mu(\Omega_i) \displaystyle \left( \displaystyle \frac{\alpha_i}{s^p} + \displaystyle \sum_{j \in I_{k,0} \, : \, j \sim i} 2^ks \mu(\Omega_j) \right)}{\bigl( n \mu(\Omega_i) + N \bigr)^\frac{p+d}{d}}}_{\eqqcolon \beta_i} \comma \\ i \in I_{k,s} \fstop
	\end{multline}
	Note that
	\[
	\frac{\mu(\Omega_i)}{ \bigl( n \mu(\Omega_i) + N \bigr)^{p/d}} \ge \frac{n^{-p/d} \mu(\Omega_i)^\frac{d-p}{d}}{\left(1+\frac{N2^{kd}}{\epsilon_2n}\right)^{p/d}} \quad \text{if } \mu(\Omega_i) \ge \epsilon_2 2^{-kd} \fstop
		\]
	Let us focus on the sum of the last terms in~\eqref{eq:thm:main1:6ter}:
	\begin{align} \label{eq:thm:main1:7ter}\begin{split}
		\sum_{i \in I_{k,s} \, : \, \mu(\Omega_i) \ge \epsilon_2 2^{-kd}} \beta_i &\le \epsilon_2^{-p/d} n^{-p/d} 2^{kp} \sum_{i \in I_{k,s}} \left( \displaystyle \frac{\alpha_i}{s^p} + \sum_{j \in I_{k,0} \, : \, j \sim i} 2^ks \mu(\Omega_j) \right) \\
		&\le \epsilon_2^{-p/d} n^{-p/d} 2^{kp} \left( \frac{\sum_{i \in I_{k,s}} \alpha_i}{s^p} + \sum_{i,j \in I_{k,0} \, : \, i \sim j} 2^ks \mu(\Omega_j) \right) \\
		&\le \epsilon_2^{-p/d} n^{-p/d} 2^{kp} \left( \frac{1}{s^p} \int \norm{x-y}^p \, \dif \gamma + 3^d 2^k s \right) \fstop
		\end{split}
	\end{align}
	We plug these estimates into~\eqref{eq:thm:main1:5ter}, take the infimum over~$\gamma \in \Gamma b_{\Omega^{(k)}}(\rho_{k,0} , \mu_n)$, and find
	\begin{multline*}
		n^{p/d} Wb_{\Omega^{(k)},p}^p(\rho_{k,0}, \mu_n) \ge \frac{q_{p,d}^p  \left(1 - \epsilon_1\right)^{p+d}}{2^{kp}} \sum_{i \in I_{k,s} \, : \, \mu(\Omega_i) \ge \epsilon_2 2^{-kd}} \frac{\mu(\Omega_i)^\frac{d-p}{d}}{ \bigl( 1+\frac{N 2^{kd}}{\epsilon_2 n} \bigr)^{p/d}} \\
		\quad - q_{p,d}^p \underbrace{(1-\epsilon_1)^{p+d}}_{<1} \epsilon_2^{-p/d} \Biggl( \frac{1}{s^p} Wb_{\Omega^{(k)},p}^p(\rho_{k,0} , \mu_n) + 3^d 2^k s \Biggr) \fstop
	\end{multline*}
	Now we make a choice for the values of~$s$ and $k$. Thanks to \Cref{lemma:seqlemma}, \Cref{lemma:partition}, and the observation that~$\Omega^{(k)} \nearrow \R^d \setminus C$ as~$k \to \infty$, we can find~$k = k_n$ such that
	\begin{equation} \label{eq:thm:main1:8ter}
		\lim_{n \to \infty} n^{-1/d}2^{k_n} = \lim_{n \to \infty} n^{1/d} 2^{-k_n} \norm{\rho|_{\Omega^{(k_n)}} - \rho_{k_n,0}}_{L^1}^{1/p} = 0 \fstop
	\end{equation}
	We set $s_n \coloneqq \sqrt{2^{-k_n} n^{-1/d}}$, which is smaller than~$2^{-k_n}$, at least for large values of~$n$, and obtain
	\begin{multline*}
		\left( n^{p/d} + \frac{q_{p,d}^p \,  2^\frac{k_n p}{2} n^\frac{p}{2d}}{\epsilon_2^{p/d} } \right) Wb_{\Omega^{(k_n)},p}^p(\rho_{k_n,0} , \mu_n) \\
		\ge q_{p,d}^p \, \frac{ \left(1 - \epsilon_1\right)^{p+d}}{ \bigl( 1+\frac{N 2^{k_n d}}{\epsilon_2 n} \bigr)^{p/d}} \int_{\set{\rho_{k_n,s_n} \ge \epsilon_2}} \rho_{k_n,s_n}^{\frac{d-p}{d}} \, \dif \LebRd 
		-3^d q_{p,d}^p \, \epsilon_2^{-p/d} \sqrt{2^{k_n}n^{-1/d} } \fstop
	\end{multline*}
	If we pass to the limit, keeping \eqref{eq:thm:main1:8ter} in mind, we get
	\begin{align} \label{eq:thm:main1:9ter}
		\begin{split}
		\liminf_{n \to \infty} n^{p/d} Wb_{\Omega^{(k_n)},p}^p(\rho_{k_n,0} , \mu_n) &\ge q_{p,d}^p (1-\epsilon_1)^{p+d} \liminf_{n \to \infty} \int_{\set{\rho_{k_n,s_n} \ge \epsilon_2}}  \rho_{k_n,s_n}^\frac{d-p}{d} \, \dif \LebRd \\
		&\ge q_{p,d}^p (1-\epsilon_1)^{p+d} \int_{\set{\rho > \epsilon_2} \setminus C} \rho^\frac{d-p}{d} \, \dif \LebRd \comma
		\end{split}
	\end{align}
	where the last inequality follows from \Cref{lemma:partition} and Fatou's Lemma. By combining the formulas~\eqref{eq:thm:main1:2ter},~\eqref{eq:thm:main1:3ter}, and~\eqref{eq:thm:main1:9ter}, and by arbitrariness of~$\epsilon_2,\epsilon_1$, we conclude:
	\[
		\liminf_{n \to \infty} n^{1/d} \eqe{p}{n}(\mu) \ge q_{p,d} \left( \int_{\R^d \setminus C} \rho^\frac{d-p}{d} \, \dif x \right)^{1/p} - c_{p,d} \underbrace{\limsup_{n \to \infty} \frac{\norm{\rho|_{\Omega^{(k_n)}}-\rho_{k_n,0}}_{L^1}^{1/p}}{2^{k_n} n^{-1/d}}}_{=0} \fstop \qedhere
	\]
\end{proof}

\section{Limit existence for uniform measures} \label{sec:limitUnif}

Combining the upper bound~\eqref{eq:mainabove} and the existence of the limit for the uniform measure on a cube, it is possible to prove (for~$p < d$) the existence of the limit for \emph{any} uniform measure on a bounded set. The proof is inspired by \cite[Theorem~24]{BartheBordenave13}.%

\begin{corollary} \label{cor:uniform}
	If~$p < d$ and~$A \subseteq \R^d$ is a bounded Borel set with~$\abs{A} \neq 0$, then
	\begin{equation}
		\lim_{n \to \infty} n^{1/d} \eqe{p}{n}(U_A) = \eqc{p}{d} \abs{A}^{1/d} \fstop
	\end{equation}
\end{corollary}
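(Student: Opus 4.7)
The plan is to combine the upper bound \eqref{eq:mainabove} from \Cref{thm:main1} with the sharp asymptotic for the cube from \Cref{prop:uniformCube}, through a decomposition of $U_B$ for a well-chosen cube $B \supseteq A$. The upper bound itself is immediate: specializing \eqref{eq:mainabove} to the density $\rho = |A|^{-1} \1_A$ gives $\int_{\R^d} \rho^{(d-p)/d} \, \dif \LebRd = |A|^{p/d}$, hence $\limsup_{n \to \infty} n^{1/d} \eqe{p}{n}(U_A) \leq \eqc{p}{d} |A|^{1/d}$.

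For the matching lower bound, I would fix an integer $q \geq 2$ and a cube $B \subseteq \R^d$ with $A \subseteq B$ and $|B| = q |A|$ (which exist since $A$ is bounded with positive measure). The decomposition $U_B = \tfrac{1}{q} U_A + \tfrac{q-1}{q} U_{B \setminus A}$, together with \Cref{rmk:subaddErr} applied to $n_1 = n$ and $n_2 = (q-1)n$ -- for which the mass-compatibility condition $\normTV{\mu^1} n_2 = \normTV{\mu^2} n_1$ holds \emph{exactly}, precisely because $|B|$ is an integer multiple of $|A|$ -- yields
\[
\eqe{p}{qn}^p(U_B) \leq \frac{1}{q} \eqe{p}{n}^p(U_A) + \frac{q-1}{q} \eqe{p}{(q-1)n}^p(U_{B \setminus A}).
\]

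By \Cref{prop:uniformCube} and the scaling \Cref{rmk:scaling}, the left-hand side equals $\eqc{p}{d}^p |A|^{p/d} n^{-p/d}(1 + o(1))$. The last term on the right is at most $\frac{q-1}{q} \eqc{p}{d}^p |A|^{p/d} n^{-p/d}(1 + o(1))$ by the upper bound in \Cref{thm:main1} applied to the compactly supported $U_{B\setminus A}$ (the volume factor $(q-1)^{p/d}$ and the number-of-points factor $(q-1)^{-p/d}$ cancel). Subtracting, these two asymptotics leave precisely $\frac{1}{q}\eqc{p}{d}^p |A|^{p/d} n^{-p/d}(1 - o(1))$ for the remaining term $\frac{1}{q}\eqe{p}{n}^p(U_A)$, whence $\liminf_{n \to \infty} n^{1/d} \eqe{p}{n}(U_A) \geq \eqc{p}{d} |A|^{1/d}$.

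The delicate point -- and the reason for forcing $|B|$ to be an integer multiple of $|A|$ -- is the mass-compatibility condition in \Cref{rmk:subaddErr}: with a generic cube $B$ only a sparse subsequence of $n$'s would satisfy it, whereas the integer-multiple choice makes the inequality valid for every $n \in \N_1$, so that the bound reaches the full $\liminf$ rather than merely a subsequential version.
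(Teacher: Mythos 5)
Your lower-bound argument is essentially the paper's: the paper also embeds $A$ in a cube, splits the cube's uniform measure into the piece carried by $A$ and the piece carried by the complement, applies \Cref{rmk:subaddErr} together with the fact that $\eqc{p}{d}$ is an infimum (equivalently \Cref{prop:uniformCube}) on the cube side and the upper bound~\eqref{eq:mainabove} on the complement, and rearranges. The only real difference is bookkeeping: the paper keeps $[0,1]^d$ as the enclosing cube, takes $\hat n = \lfloor n/\abs{A}\rfloor + 1$, and absorbs the integer mismatch into a third measure of mass $1/\hat n$ quantized by a single point, whereas you force the mass-compatibility condition to hold exactly by prescribing $\abs{B} = q\abs{A}$; both devices work, and yours removes the remainder term at the cost of a constraint on $B$.

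That constraint is where the one inaccuracy sits: for a \emph{fixed} integer $q \ge 2$, a cube $B \supseteq A$ with $\abs{B} = q\abs{A}$ need not exist. Any cube containing $A$ has side at least $\diam(A)/\sqrt{d}$, hence volume at least $\bigl(\diam(A)/\sqrt{d}\bigr)^d$, which can far exceed $q\abs{A}$ when $A$ has small measure but large diameter; so the parenthetical ``which exist since $A$ is bounded with positive measure'' fails for small $q$. The fix is immediate and does not disturb the rest of the proof: your final estimate reads $\eqe{p}{n}^p(U_A) \ge \eqc{p}{d}^p\abs{A}^{p/d}n^{-p/d}\bigl(1-(q-1)\,o(1)\bigr)$, which yields the desired $\liminf$ for \emph{any} fixed admissible $q$, so it suffices to choose a single sufficiently large integer $q$, e.g.~$q \ge \abs{B_0}/\abs{A}$ for some cube $B_0 \supseteq A$, and then dilate $B_0$ about its center until its volume equals $q\abs{A}$. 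With that correction the proof is complete and matches the paper's in substance.
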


\begin{proof}
	Note that this result easily follows from \Cref{prop:uniformCube} if~$A$ is a cube. Moreover, in general, one inequality is already given by \eqref{eq:mainabove} in \Cref{thm:main1}.
	
	We may and will assume that~$A$ is contained in (and not essentially equal to)~$[0,1]^d$. Consider the measures:
	\begin{itemize}
	\item $\mu^1 \coloneqq U_d|_{A} = \abs{A} U_{A}$,
	\item $\mu^2 \coloneqq U_d-\mu^1 = \left( 1-\abs{A} \right) U_{[0,1]^d \setminus A}$.
	\end{itemize}
	For~$n \in \N_1$, define
	\[
		\hat n \coloneqq \left \lfloor \frac{n}{\abs{A}} \right \rfloor +1 \comma \quad n_1 \coloneqq n \comma \quad n_2 \coloneqq \hat n - n - 1 \comma \quad n_0 \coloneqq 1 \fstop
	\]
	Observe that that~$0\le n_i \le \hat n \normTV{\mu^i}$ for~$i \in \set{1,2}$ and define
	\[
		\mu^{i,n} \coloneqq
		 \frac{n_i}{\hat n\, \normTV{\mu^i}} \mu^i \quad \text{for } i \in \set{1,2}\comma \qquad \mu^{0,n} \coloneqq U_d - \mu^{1,n} - \mu^{2,n} \fstop
	\]
	By definition of~$\eqc{p}{d}$ and \Cref{rmk:subaddErr}, we have
	\[
		\eqc{p}{d}^p\hat n^{-p/d} \le \eqe{p}{\hat n}^p(U_d) \le \sum_{i=0}^2 \eqe{p}{n_i}^p(\mu^{i,n}) \fstop
	\]
	The $0^\text{th}$ term at the right-hand side can be easily bounded:
	\[
		\eqe{p}{1}^p(\mu^{0,n}) \le W_p^p\left(\mu^{0,n}, \normTV{\mu^{0,n}} \delta_0 \right) = \int \norm{x}^p \, \dif \mu^{0,n} \le \frac{d^{p/2}}{\hat n} \fstop
	\]
	Hence,
	\begin{equation*}
		\eqc{p}{d}^p \le d^{p/2} \left(\frac{1}{\hat n}\right)^\frac{d-p}{d} + \left(\frac{n}{\hat n}\right)^{\frac{d-p}{d}} n^{p/d} \eqe{p}{n}^p\left( U_{A} \right) + \left(\frac{n_2}{\hat n}\right)^\frac{d-p}{d} n_2^{p/d} \eqe{p}{n_2}\bigl(U_{[0,1]^d \setminus A}\bigr) \comma
	\end{equation*}
	which yields
	\begin{align*}
		\eqc{p}{d}^p &\le \abs{A}^\frac{d-p}{d}\liminf_{n \to \infty} n^{p/d} \eqe{p}{n}^p(U_A) + (1-\abs{A})^\frac{d-p}{d} \limsup_{n \to \infty} n^{p/d} \eqe{p}{n}\bigl(U_{[0,1]^d \setminus A} \bigr) \\
		&\stackrel{\eqref{eq:mainabove}}{\le} \abs{A}^\frac{d-p}{d} \liminf_{n \to \infty} n^{p/d} \eqe{p}{n}^p(U_A) + \eqc{p}{d}^p (1-\abs{A}) \fstop
	\end{align*}
	We conclude by rearranging the terms.
\end{proof}

\section{Proof of \Cref{thm:q2}} \label{sec:limit}

The proof of \Cref{thm:q2} is based on a fundamental result by L.~Fejes T\'oth~\cite[p.~81]{FejesToth53}, see also~\cite{Gruber99}.

\begin{theorem}[L.~Fejes T\'oth~\cite{FejesToth53}] \label{thm:fejestoth}
	Let~$f \colon [0,\infty) \to \R$ be a nondecreasing function, let~$H \subseteq \R^2$ be a convex hexagon centered at the origin, let~$n \in \N_1$, and let~$x_1,\dots,x_n \in \R^2$. Then
	\begin{equation}
		\int_{H} f\bigl(\norm{x}\bigr) \, \dif x \le \frac{1}{n} \int_{\sqrt{n}H} \min\set{f\bigl(\norm{x-x_i}\bigr) \, : \, i \in \set{1,\dots,n} } \, \dif x  \fstop
	\end{equation}
\end{theorem}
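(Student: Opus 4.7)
The plan is to follow Fejes Tóth's classical strategy, combining a Voronoi decomposition of $\sqrt{n}H$ with a moment inequality for convex polygons with at most six sides. First, partition $\sqrt{n}H$ into the Voronoi cells $V_i := \{x \in \sqrt{n}H : \norm{x - x_i} \le \norm{x - x_j} \text{ for all } j\}$, which are convex polygons whose areas sum to $n|H|$; the right-hand side then rewrites as $\frac{1}{n} \sum_i \int_{V_i} f(\norm{x - x_i}) \, dx$, reducing the target inequality to
$$\sum_i \int_{V_i} f(\norm{x - x_i}) \, dx \ge n \int_H f(\norm{x}) \, dx.$$

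The central ingredient is the hexagonal moment lemma: for every convex polygon $V$ with at most six sides containing $x_0 \in V$, and every nondecreasing $f$,
$$\int_V f(\norm{x - x_0}) \, dx \ge \Phi(|V|), \qquad \Phi(a) := \int_{H_a} f(\norm{x}) \, dx,$$
where $H_a$ is the hexagon centered at the origin, homothetic to $H$, of area $a$. I would prove this by iterated Steiner-type symmetrization of $V$ about axes through $x_0$: each reflection preserves area and weakly decreases the integral, and one arranges the axes (exploiting the central symmetry of $H$) so that the iteration converges to the centered copy of $H$ of the same area. For cells with more than six sides, Euler's formula applied to the planar Voronoi subdivision of $\sqrt{n}H$ (which itself contributes only six boundary edges) gives an average side count of at most six per cell, permitting a local truncation or re-tessellation that reduces to the six-sided case at no cost.

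To assemble the per-cell bounds, I show that $\Phi$ is convex in $a$. Writing $\Phi(a) = \int_0^{2\pi} F\bigl(R(\theta)\sqrt{a/|H|}\bigr) \, d\theta$ in polar coordinates — with $R(\theta)$ the radial function of $H$ and $F(r) := \int_0^r s f(s) \, ds$ — a direct second-derivative calculation gives $\frac{d^2}{da^2} F(c\sqrt{a}) \ge 0$ for every $c > 0$, since $F'(r) = r f(r)$ and $f$ is nondecreasing; hence $\Phi$ is a convex average of convex functions of $a$. Since $\sum_i |V_i| = n|H|$, Jensen's inequality yields
$$\sum_i \Phi(|V_i|) \ge n \, \Phi(|H|) = n \int_H f(\norm{x}) \, dx,$$
completing the proof.

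The hardest part will be the hexagonal moment lemma, whose symmetrization argument is geometrically delicate: the reflection axes must be chosen so that the iteration actually converges to the centered homothet of $H$ (not merely to some arbitrary symmetric hexagon), which requires careful exploitation of the central symmetry of $H$ and may demand a compactness/continuity argument to pass to the limit. A secondary technical point is the Euler-formula bookkeeping used to reduce the sum over all cells to the six-sided case, in particular keeping track of cells adjacent to the boundary $\partial(\sqrt{n}H)$.
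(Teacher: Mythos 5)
You are proving a statement the paper itself does not prove: Theorem~\ref{thm:fejestoth} is quoted from Fejes T\'oth's \emph{Lagerungen} (p.~81), with Gruber's short analytic proof cited alongside, and it is applied in Section~\ref{sec:limit} only with~$H$ a \emph{regular} hexagon. Your architecture --- Voronoi cells of~$\sqrt{n}H$, a per-cell moment bound, convexity of~$\Phi$ in the area plus Jensen, and an Euler-formula control of side counts --- is indeed the classical Fejes T\'oth/Gruber strategy, and your computation that~$a \mapsto \Phi(a)$ is convex is correct. The central gap is the ``hexagonal moment lemma'' itself: among convex polygons with at most six sides of prescribed area containing~$x_0$, the minimizer of~$\int_V f\bigl(\norm{x-x_0}\bigr)\,\dif x$ is the \emph{regular} hexagon centered at~$x_0$, not the centered homothet of~$H$. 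For a long, thin centrally symmetric hexagon~$H$ and~$f(r)=r^2$, a regular hexagonal cell of the same area has strictly smaller moment than~$H_a$, so~$\Phi(\abs{V})$ is not a lower bound and your lemma is false as stated; in fact the theorem in this generality fails too (for elongated~$H$ and large~$n$ the left side is the large second moment of~$H$, while the right side stays bounded by the optimal-quantization constant times~$\abs{H}^2$), which is why the classical comparison body is the regular hexagon and why the paper only invokes the result for regular~$H$. Correspondingly, your symmetrization proof cannot be repaired by a clever choice of axes: Steiner symmetrization about a line through~$x_0$ does decrease the moment, but it does not preserve the class of at-most-six-sided polygons, and unconstrained iteration drives~$V$ toward a disk, never toward a prescribed non-circular hexagon. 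The classical proofs avoid symmetrization (Fejes T\'oth compares triangles emanating from~$x_0$ with sectors of the regular hexagon; Gruber's argument is analytic).

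The second gap is the treatment of Voronoi cells with more than six sides, which is not mere bookkeeping. A ``local truncation or re-tessellation at no cost'' is not available: shrinking a cell changes its area in the direction that ruins the Jensen step, while enlarging it increases its moment on the wrong side of the inequality. The standard resolution is to prove the moment lemma for~$v$-gons with an explicit dependence on the number of sides, verify convexity (and the appropriate monotonicity) of the resulting bound jointly in~$(v,a)$, and then combine Jensen with the Euler-formula estimate~$\sum_i v_i \le 6n$; it is exactly here that the hypothesis that the ambient domain has at most six sides enters. Two smaller points: since~$f$ may take negative values, first reduce to~$f \ge 0$ by replacing~$f$ with~$f - f(0)$, which shifts both sides of the inequality by the same amount~$\abs{H}\,f(0)$ because the two domains have areas~$\abs{H}$ and~$n\abs{H}$; and in the Jensen step make explicit that~$\sum_i \abs{V_i} = n\abs{H}$, which you do state. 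With the comparison body corrected to the regular hexagon and the~$v$-gon issue handled as above, your outline becomes the standard proof; as written, however, the key lemma is both false in the stated form and unproved by the proposed symmetrization.
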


Let us fix~$n \in \N_1$. To prove \Cref{thm:q2}, we consider a hexagonal tiling~$\set{H_{i,n}}_i$ of the plane, with the area of each~$H_{i,n}$ being equal to~$\abs{A}/n$. The idea is to define an empirical quantizer by taking the centers of the hexagons contained in~$A$. \Cref{thm:fejestoth} (together with \Cref{thm:BWZ}) is used to show that this quantizer is asymptotically optimal for the \emph{classical} quantization problem; hence,~$e_{p,n}(U_A) \le \eqe{p}{n}(U_A) \lessapprox e_{p,n}(U_A)$. The issue is that, in general, we cannot tile~$A$ perfectly with hexagons. Therefore, we carry out this construction only for the hexagons that are ``well-contained'' in~$A$ and leave out a strip of approximate thickness~$n^{-1/2}$. We complete the quantizer by splitting the strip into approximately (and up to constant)~$\sqrt{n}$ square-looking pieces~$\set{B_j}_j$ of approximate size~$n^{-1/2} \times n^{-1/2}$ and taking one point~$x_j$ from each piece. In this way, the contribution of the strip to the $p^\text{th}$ power of the quantization error is bounded by
\begin{align*}
	W_p^p\left(\frac{1}{n}\sum_j U(B_j), \frac{1}{n} \sum_{j} \delta_{x_j}  \right) &\stackrel{\eqref{eq:subadditivityWasserstein}}{\le} \frac{1}{n} \sum_j W_p^p\bigl(U(B_j), \delta_{x_j}\bigr) \le \frac{1}{n} \sum_j \diam(B_j)^p \\
	&\lessapprox \frac{1}{n} \sqrt{n} n^{-p/2} \comma
\end{align*}
which is negligible, i.e.,~much smaller than~$n^{-p/2}$.

We use the bi-Lipschitz map to make the argument rigorous, by transforming the strip into a more explicit approximate annulus.

\begin{lemma} \label{lemma:jordan}
	Let~$ D \subseteq \R^2$ be the open unit disk and~$\overline D$ be its closure. Let~$T \colon  \overline D \to \R^2$ be a homeomorphism onto its image. Then~$\partial \bigl(T(D)\bigr) = T(\partial D)$.
\end{lemma}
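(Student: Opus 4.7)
The plan is to deduce the statement from two ingredients: the compactness of $\overline D$ (so $T(\overline D)$ is closed), and Brouwer's \emph{invariance of domain} applied to $T|_D$ (so $T(D)$ is open). Once both are established, the two inclusions follow from elementary set-theoretic manipulations using the injectivity of $T$.

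First I would observe that $T(\overline D)$ is compact, hence closed in $\R^2$, and that $T(\overline D) = T(D) \sqcup T(\partial D)$ is a disjoint union by injectivity. Next, I would invoke invariance of domain: the restriction $T|_D \colon D \to \R^2$ is a continuous injection from an open subset of $\R^2$ into $\R^2$, so $T(D)$ is open in $\R^2$, and $T|_D$ is an open map.

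For the inclusion $T(\partial D) \subseteq \partial\bigl(T(D)\bigr)$, pick $y = T(x)$ with $x \in \partial D$. Injectivity gives $y \notin T(D)$. On the other hand, choosing a sequence $x_n \in D$ with $x_n \to x$, continuity of $T$ yields $T(x_n) \to y$, so $y \in \overline{T(D)} \setminus T(D) \subseteq \partial\bigl(T(D)\bigr)$. For the reverse inclusion, take $y \in \partial\bigl(T(D)\bigr) \subseteq \overline{T(D)} \subseteq T(\overline D)$, where the last inclusion uses that $T(\overline D)$ is closed. Write $y = T(x)$ with $x \in \overline D$; if $x \in D$, then $y \in T(D)$, which is open, so $y$ would be an interior point of $T(D)$, contradicting $y \in \partial\bigl(T(D)\bigr)$. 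Hence $x \in \partial D$ and $y \in T(\partial D)$.

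The only nontrivial input is invariance of domain, which is unavoidable (without it one cannot conclude that $T(D)$ is open, since $T$ is only assumed to be a homeomorphism onto its image, not, e.g., a diffeomorphism). Everything else is a straightforward point-set argument.
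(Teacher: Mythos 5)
Your argument is correct, but it follows a genuinely different route from the paper. The paper invokes the Jordan--Sch\"onflies Theorem to extend $T|_{\partial D}$ to a global homeomorphism $\Phi$ of $\R^2$, and then uses connectedness together with two No-Retraction arguments to identify $\Phi^{-1}\bigl(T(D)\bigr)$ with $D$, concluding $T(D)=\Phi(D)$ and hence $\partial\bigl(T(D)\bigr)=\Phi(\partial D)=T(\partial D)$. You instead take invariance of domain as the single topological input: $T(D)$ is open, $T(\overline D)$ is compact hence closed, and both inclusions then follow from injectivity and elementary point-set reasoning (note $\overline{T(D)}\setminus T(D)=\partial\bigl(T(D)\bigr)$ exactly because $T(D)$ is open, which is what makes your first inclusion clean). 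Your proof has the advantage of being dimension-independent --- it works verbatim for the closed unit ball in $\R^d$, indeed for the closure of any bounded open set --- and avoids the specifically planar Sch\"onflies machinery, whereas the paper's argument is tied to $d=2$ but stays within the classical circle of planar results it already cites. Either proof is acceptable here; the only caveat is that invariance of domain is itself a nontrivial theorem (of Brouwer degree/homology type), so the appeal to it should be cited rather than treated as elementary, just as the paper cites Jordan--Sch\"onflies.
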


\begin{proof}
	By the Jordan--Sch\"onflies Theorem (cf.~\cite[Theorem~3.1]{Thomassen92}), there exists a homeomorphism~$\Phi \colon \R^2 \to \R^2$ such that~$\Phi|_{\partial D} = T|_{\partial D}$. In particular, the connected set~$\Phi^{-1}\bigl(T(D)\bigr)$ is contained in~$\R^2 \setminus \partial D$, implying that, in fact, it is entirely contained in either~$D$ or~$\R^2 \setminus \overline D$. The latter case is impossible: any retraction~$r_1 \colon \R^2 \setminus D \to \partial D$ would induce a retraction~$r_1 \circ \Phi^{-1} \circ T \colon \overline D \to \partial D$, which is absurd by the No-Retraction Theorem. Hence,~$\Phi^{-1}\bigl(T(D)\bigr) \subseteq D$. We claim that equality holds. Indeed, if there exists~$z \in D \setminus \Phi^{-1}\bigl(T(D)\bigr)$, then we can find a retraction~$r_2 \colon \overline D \setminus \set{z} \to \partial D$, which induces a retraction~$r_2 \circ \Phi^{-1} \circ T \colon \overline D \to \partial D$. Once again, this is absurd. Thus,~$T(D) = \Phi(D)$ and, using that~$\Phi$ is a homeomorphism and that it coincides with~$T$ on~$\partial D$, we conclude:
	\[
		\partial \bigl(T(D)\bigr) = \partial \bigl( \Phi(D) \bigr) = \Phi(\partial D) = T(\partial D) \fstop \qedhere
	\]
\end{proof}

\begin{proof}[Proof of \Cref{thm:q2}]
	Fix a regular hexagon~$H \subseteq \R^2$ with unit area, centered at the origin. For~$n \in \N_1$, choose~$x_1,\dots,x_n \in \R^2$. By \Cref{thm:fejestoth} with~$f(t) \coloneqq t^p$, we have
	\begin{equation*}
		\int_H \norm{x}^p \, \dif x \le \frac{1}{n} \int_{\sqrt{n}H} \min_{i} \norm{x-\sqrt{n} x_i}^p \, \dif x
		= n^{p/2} \int_{H} \min_{i} \norm{x-x_i}^p \, \dif x \fstop
	\end{equation*}
	Hence, by arbitrariness of the points~$x_1,\dots,x_n$ and by \Cref{thm:BWZ},
	\begin{equation*}
		\int_H \norm{x}^p \, \dif x \le \lim_{n \to \infty} n^{p/2} e_{p,n}^p(U_H) = q_{p,2}^p\fstop
	\end{equation*}
	Therefore, again thanks to \Cref{thm:BWZ}, it will suffice to prove that
	\begin{equation} \label{eq:thm:q2:1}
		\limsup_{n \to \infty} n^{p/2} \eqe{p}{n}^p(U_A) \le \abs{A}^{p/2} \int_H \norm{x}^p \, \dif x \fstop
	\end{equation}

	We can and will assume that~$\abs{A} = 1$. Let~$T \colon  \overline D \to A$ be a (bijective) bi-Lipschitz map and let~$c_T > 1$ be a Lipschitz constant for both~$T$ and~$T^{-1}$. %
	Let~$\set{H_i}_{i \in \N_0}$ be a family of regular, unit area, pairwise disjoint hexagons that cover~$\R^2$. For~$n \in \N_1$, define~$H_{i,n} \coloneqq H_i / \sqrt{n}$ and
	\begin{equation} \label{eq:thm:q2:2}
		I_n \coloneqq \set{i \in \N_0 \, : \, H_{i,n} \subseteq T(D) \text{ and } \dist \left(H_{i,n}, T(\partial D) \right) > \frac{c_T}{\sqrt{n}} } \comma \quad  A_n \coloneqq \bigcup_{i \in I_n} H_{i,n} \fstop
	\end{equation}
	Note that we have~$A_n \subseteq T(D)$ and
	\begin{equation} \label{eq:thm:q2:2bis}
		D \setminus T^{-1}(A_n) \subseteq \biggl\{x \in D \, : \dist(x,\partial D) \le n^{-1/2} \underbrace{c_T \, \bigl(c_T + \diam(H)\bigr)}_{\eqqcolon \bar c} \biggr\} \fstop
	\end{equation}
	Indeed, for every~$x \in D \setminus T^{-1}(A_n)$ there exists~$i \not\in I_n$ such that~$T(x) \in H_{i,n}$. There are two cases. If
		$\dist\left(H_{i,n}, T(\partial D) \right) \le \frac{c_T}{\sqrt{n}}$,
		then
		\begin{equation*} \inf_{y \in \partial D} \norm{x-y} \le c_T \inf_{y \in \partial D} \norm{T(x)-T(y)} \le c_T \underbrace{\diam\left(H_{i,n}\right)}_{=n^{-1/2}\diam(H)} + c_T \underbrace{\dist\left(H_{i,n}, T(\partial D) \right)}_{\le n^{-1/2}c_T}  \fstop \end{equation*}
	If~$H_{i,n} \not \subseteq T(D)$, then, since~$H_{i,n}$ is connected, we have~$H_{i,n} \cap \partial \bigl(T(D)\bigr) \neq \emptyset$. By \Cref{lemma:jordan}, we know that~$\partial \bigl(T(D)\bigr) = T(\partial D)$; hence~$
		\dist(x,\partial D) \le c_T\diam\left(H_{i,n}\right)$.

	By the measure-theoretic properties of Lipschitz maps (cf.~\cite[Theorem~7.5]{Mattila95}), we have~$\abs{T(\partial D)} = 0$ and
	\begin{equation} \label{eq:thm:q2:3}
		\abs{A \setminus A_n} = \abs{T(D) \setminus A_n} \le c_T^2 \abs{D \setminus T^{-1}(A_n)} \to 0 \quad \text{as } n \to \infty \text{ by~\eqref{eq:thm:q2:2bis}.} 
	\end{equation}
	
	Let~$k_n \coloneqq \#I_n$. By \Cref{rmk:subaddErr}, we have the inequality
	\[
		\eqe{p}{n}^p(U_A) \le \sum_{i \in I_n} \eqe{p}{1}^p\bigl(U_A|_{H_{i,n}}\bigr) + \eqe{p}{n-k_n}^p\bigl(U_A|_{A \setminus A_n}\bigr) \comma
	\]
	and the sum over~$i \in I_n$ is easy to bound:
	\begin{equation*}
		\sum_{i \in I_n} \eqe{p}{1}^p\bigl(U_A|_{H_{i,n}}\bigr) \le k_n \int_{H/\sqrt{n}} \norm{x}^p \, \dif x \le n^{-p/2}  \int_H \norm{x}^p \, \dif x  \fstop
	\end{equation*}
	Therefore, \eqref{eq:thm:q2:1} is verified once we prove that
	\begin{equation} \label{eq:thm:q2:4}
		\limsup_{n \to \infty} n^{p/2} \eqe{p}{n-k_n}^p\bigl(U_A|_{A \setminus A_n}\bigr) = 0 \fstop
	\end{equation}

	From now on, we use polar coordinates on~$\overline D$. Consider the function
	\begin{equation} \label{eq:thm:q2:5}
		g(\theta) \coloneqq \abs{T \set{(r,\phi) \in D \setminus T^{-1}(A_n) \, : \, \phi \in [0,\theta]}} \comma \qquad \theta \in [0,2\pi] \comma
	\end{equation}
	and note that~$g$ is continuous,~$g(0) = 0$, and~$g(2\pi) = \abs{T(D) \setminus A_n} = \abs{A \setminus A_n} = 1-\frac{k_n}{n}$. Furthermore,~$g$ is strictly increasing: for~$0 \le \theta_1 < \theta_2 \le 2\pi$, we have
	\begin{align*}
		g(\theta_2) - g(\theta_1) &=  \abs{T\set{(r,\phi) \in D \setminus T^{-1}(A_n) \, : \, \phi \in (\theta_1,\theta_2]}} \\
		&\ge c_T^{-2} \abs{\set{(r,\phi) \in D \setminus T^{-1} (A_n) \, : \, \phi \in (\theta_1,\theta_2]}} \\
		&\ge c_T^{-2} \abs{\set{(r,\phi) \in D \setminus T^{-1}( A_n) \, : \, r \in [1-n^{-1/2},1] \comma \phi \in (\theta_1,\theta_2]}} \\
		&\ge c_T^{-2} \abs{\set{(r,\phi) \in D \, : \, r \in [1-n^{-1/2},1] \comma \phi \in (\theta_1,\theta_2]}} \comma
	\end{align*}
	where the last inequality follows from the definition of~$A_n$ in~\eqref{eq:thm:q2:2}. Indeed, if~$1-r \le n^{-1/2}$ and~$T(r,\phi) \in H_{i,n}$, then
	\[
		\dist\left(H_{i,n}, T(\partial D) \right) \le \norm{T(r,\phi) - T(1,\phi)} \le c_T \norm{(r,\phi) - (1,\phi)} \le \frac{c_T}{\sqrt{n}} \fstop
	\]
	Therefore,
	\begin{equation} \label{eq:thm:q2:6}
		g(\theta_2) - g(\theta_1) \ge c_T^{-2} \frac{\theta_2-\theta_1}{\sqrt{n}} \left( 1-\frac{1}{2\sqrt{n}} \right) \ge c_T^{-2} \frac{\theta_2-\theta_1}{2 \sqrt{n}} \fstop
	\end{equation}

	Let us define
	\begin{equation} \label{eq:thm:q2:7}
	\begin{alignedat}{2} 
		\bar \theta_j &\coloneqq g^{-1}(j/n) \comma &&\qquad  j \in \set{0,\dots,n-k_n} \\
		B_j &\coloneqq T\set{(r,\phi) \in D \setminus T^{-1}(A_n) \, : \, \phi \in (\bar \theta_{j-1}, \bar \theta_j]} \comma &&\qquad j \in \set{1,\dots,n-k_n} \fstop
	\end{alignedat}
	\end{equation}
	\begin{figure}
		\centering
		\includegraphics[trim={0 4cm 0 4cm},clip,width = 0.8\textwidth]{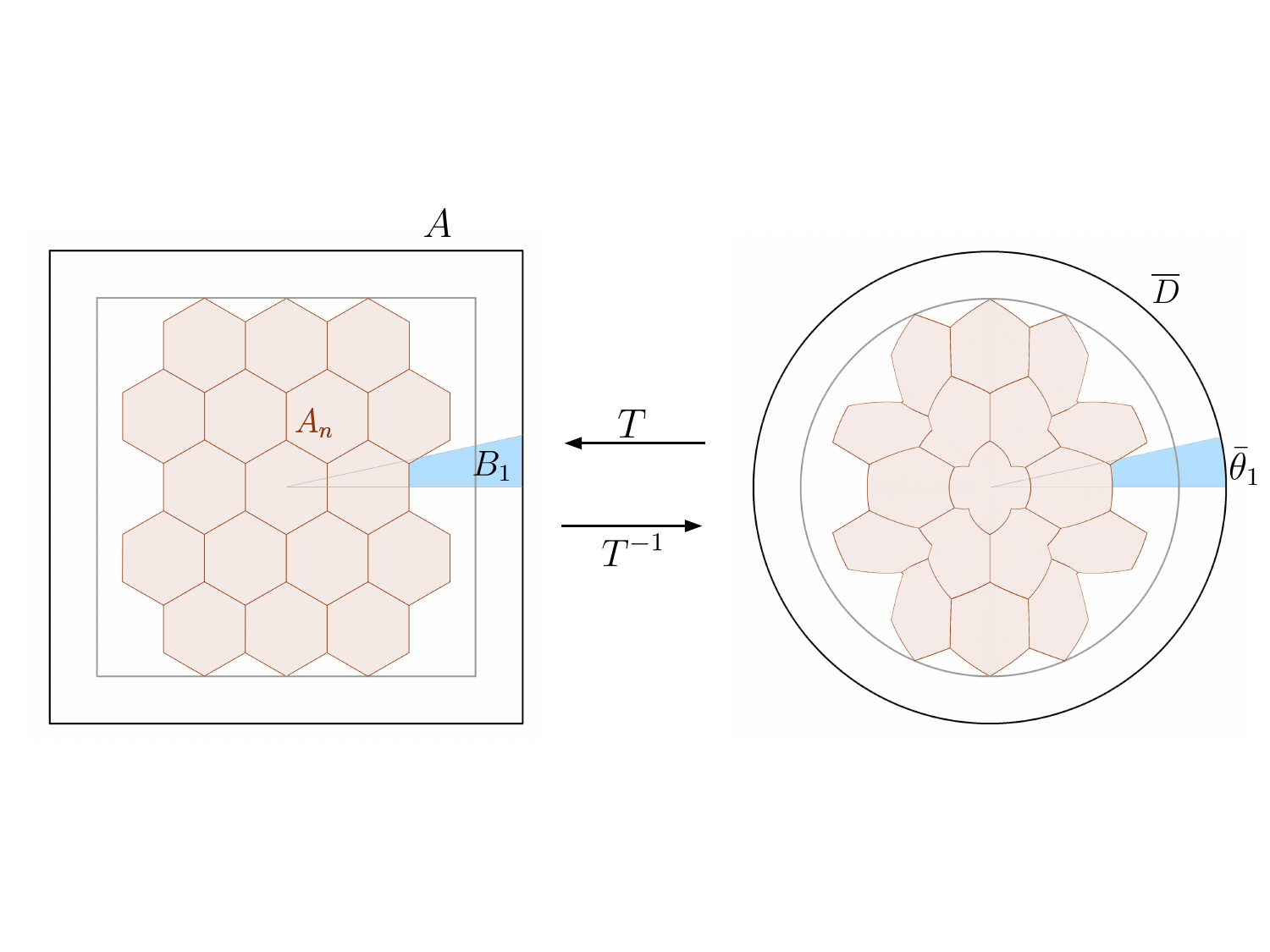}
		\caption{Idea for \Cref{thm:q2}. We select the hexagons inside~$T(D)$ that are sufficiently far from~$T(\partial D)$. We define the sets~$B_i$ as~$T$-images of intersections of~$D \setminus T^{-1}(A_n)$ with angles.}
		\label{fig:hexagons}
	\end{figure}
	These sets enjoy two important properties: firstly, by~\eqref{eq:thm:q2:5} and~\eqref{eq:thm:q2:7},
	\begin{equation} \label{eq:thm:q2:8}
		\abs{B_j} = g(\bar \theta_j) - g(\bar \theta_{j-1}) = \frac{1}{n} \semicolon
	\end{equation}
	secondly, by~\eqref{eq:thm:q2:2bis},
	\begin{equation*}
		B_j %
		\subseteq T\set{(r,\phi) \in D \, : \, r \ge 1-n^{-1/2}\bar c \comma \phi \in (\bar \theta_{j-1}, \bar \theta_j]} \comma
	\end{equation*}
	which implies\footnote{To move from one point of~$ D$ to another, one can (inefficiently) walk radially up to the circle~$\partial D$, then along~$\partial D$, and radially again.}
	\begin{align} \label{eq:thm:q2:9} \begin{split}
		\diam(B_j) &\le c_T(\bar \theta_j - \bar \theta_{j-1}) + 2c_T \frac{\bar c}{\sqrt{n}} \stackrel{\eqref{eq:thm:q2:6}}{\le} 2c_T^3 \sqrt{n} \bigl( g(\bar \theta_j) - g(\bar \theta_{j-1}) \bigr) + 2c_T \frac{\bar c}{\sqrt{n}} \\
		&\stackrel{\eqref{eq:thm:q2:7}}{=} 2c_T^3 \frac{1}{\sqrt{n}} + 2c_T \frac{\bar c}{\sqrt{n}} \lesssim_T n^{-1/2} \fstop \end{split}
	\end{align}
	We can conclude: by \Cref{rmk:subaddErr},
	\begin{align*}
		\eqe{p}{n-k_n}^p\bigl(U_A|_{A \setminus A_n}\bigr) &\stackrel{\eqref{eq:thm:q2:8}}{\le} \sum_{j=1}^{n-k_n} \eqe{p}{1}^p\bigl(U_A|_{B_j}\bigr) \le \sum_{i=1}^{n-k_n} \frac{\diam(B_j)^p}{n} \\
		&\stackrel{\eqref{eq:thm:q2:9}}{\lesssim_{T,p}} \left(1-\frac{k_n}{n}\right) n^{-p/2} = \abs{A \setminus A_n} n^{-p/2} \comma
	\end{align*}
	which, together with~\eqref{eq:thm:q2:3}, implies~\eqref{eq:thm:q2:4}. \qedhere
	
\end{proof}

\bibliographystyle{myplainnat}

\end{document}